\newtheorem{Theorem}{Theorem}[section]
\newtheorem{Corollary}[Theorem]{Corollary}
\newtheorem{Proposition}[Theorem]{Proposition}
\newtheorem{Lemma}[Theorem]{Lemma}
\theoremstyle{definition}
\newtheorem{Definition}[Theorem]{Definition}
\newtheorem{Remark}[Theorem]{Remark}
\newtheorem{Example}[Theorem]{Example}
\numberwithin{equation}{section}
\def\Ex{\mathbb{E}}
\def\Pr{\mathbb{P}}
\def\Cov{\mathop{\mathrm{Cov}}\nolimits}
\def\Var{\mathop{\mathrm{Var}}\nolimits}
\def\bs{\boldsymbol}
\def\bmu{\bs{\mu}}
\def\bSigma{\bs{\Sigma}}
\def\hat{\widehat}
\newcommand{\M}{\mathbb{M}}
\newcommand{\R}{\mathbb{R}}
\newcommand{\V}{\mathbb{V}}
\newcommand{\W}{\mathbb{W}}
\newcommand{\LL}{\mathcal{L}}
\newcommand{\NN}{\mathcal{N}}
\newcommand{\PP}{\mathcal{P}}
\newcommand{\QQ}{\mathcal{Q}}
\newcommand{\VV}{\mathcal{V}}
\newcommand{\Rqq}{\R_{}^{q\times q}}
\newcommand{\Rqqns}{\R_{\rm ns}^{q\times q}}
\newcommand{\Rqqsym}{\R_{\rm sym}^{q\times q}}
\newcommand{\Rqqsympd}{\R_{{\rm sym},>0}^{q\times q}}
\newcommand{\Rqqsympsd}{\R_{{\rm sym},\ge0}^{q\times q}}
\newcommand{\diag}{\mathop{\mathrm{diag}}}
\newcommand{\tr}{\mathop{\mathrm{tr}}}
\begin{document}

%
\begin{frontmatter}
\title{\boldmath$M$-functionals of multivariate scatter}
\runtitle{$M$-functionals of multivariate scatter}

\begin{aug}
%
\author{\fnms{Lutz} \snm{D\"umbgen}\corref{}\thanksref{t1}\ead
[label=e1]{duembgen@stat.unibe.ch}}
\address{University of Bern, Sidlerstr.\ 5, CH-3012 Bern, Switzerland\\
\printead{e1}}
\end{aug}
\begin{aug}
\author{\fnms{Markus} \snm{Pauly}\thanksref{t2}\ead
[label=e2]{markus.pauly@uni-ulm.de}}
\address{Ulm University, Helmholtzstr.\ 20, D-89081 Ulm, Germany\\
\printead{e2}}
\end{aug}
\medskip\textbf{\and}
\begin{aug}
\author{\fnms{Thomas} \snm{Schweizer}\thanksref{t1}\ead
[label=e3]{thomas-za.schweizer@ubs.com}}
\address{University of Bern, Sidlerstr.\ 5, CH-3012 Bern, Switzerland\\
\printead{e3}}
\end{aug}

\thankstext{t1}{Supported by Swiss National Science Foundation (SNF).}
\thankstext{t2}{Supported by a fellowship within the postdoc programme
of the German Academic Exchange Service (DAAD).}
\runauthor{L. D\"umbgen et al. }

\begin{abstract}
This survey provides a self-contained account of $M$-estimation of
multivariate scatter. In particular, we present new proofs for
existence of the underlying $M$-functionals and discuss their weak
continuity and differentiability. This is done in a rather general
framework with matrix-valued random variables. By doing so we reveal a
connection between Tyler's (\citeyear{Tyler_1987a}) $M$-functional of
scatter and the estimation of proportional covariance matrices.
Moreover, this general framework allows us to treat a new class of
scatter estimators, based on symmetrizations of arbitrary order.
Finally these results are applied to $M$-estimation of multivariate
location and scatter via multivariate $t$-distributions.
\end{abstract}

\begin{keyword}[class=MSC]
\kwd{62G20}
\kwd{62G35}
\kwd{62H12}
\kwd{62H99}
\end{keyword}

\begin{keyword}
\kwd{Coercivity}
\kwd{convexity}
\kwd{matrix exponential function}
\kwd{multivariate $t$-distribution}
\kwd{scatter functionals}
\kwd{weak continuity}
\kwd{weak differentiablity}
\end{keyword}

%
\received{\smonth{1} \syear{2014}}

\end{frontmatter}
\maketitle

\tableofcontents

\section{Introduction}

The study of $M$-estimation for certain parameters or functionals of
interest has a long history. Roughly speaking an $M$-estimator is the
maximizer of a random criterion function depending on the data and
corresponding to the estimation problem. Best known examples are
maximum-likelihood estimators as well as robust estimators of location,
e.g.\ the sample median, and scatter. In basic statistics courses it is
shown that especially maximum-likelihood estimators are asymptotically
normal and efficient under quite weak assumptions, see e.g.\ the
graduate textbooks by Serfling (\citeyear{Serfling_1980}), Lehmann and
Casella (\citeyear{Lehmann_Casella_1998}) and van der Vaart (\citeyear
{vdVaart_1998}). Specific $M$-estimators of one- and multidimensional
parameters can be shown to be asymptotically normal and quite efficient
under even weaker assumptions, see e.g.\ Huber (\citeyear{Huber_1964,
Huber_1973}), thus providing an interesting alternative to
classical unbiased estimators.

In the present survey we consider $M$-estimates and functionals of
multivariate location and scatter. Our purpose is to provide a concise
but self-contained presentation of the main ideas and results in this
context, the target audience being researchers and advanced graduate
students. The basic setting is as follows: Let $P$ be a probability
distribution on $\R^q$. Traditionally the center of $P$ is defined to
be the mean vector
\[
\bmu(P) \ := \ \int x \, P(dx),
\]
assuming that $\int\|x\| \, P(dx) < \infty$. Assuming also that $\int
\|x\|^2 \, P(dx) < \infty$, the covariance matrix of $P$ is defined as
\[
\bSigma(P) \ := \ \int(x - \bmu(P))(x - \bmu(P))^\top\, P(dx),
\]
where vectors are understood as column vectors and $(\cdot)^\top$
denotes transposition. Recall that for a random vector $X$ with
distribution $P$ and any fixed vector $v \in\R^q$,
\[
\Ex(v^\top X) \ = \ v^\top\bmu(P)
\quad\text{and}\quad
\Var(v^\top X) \ = \ v^\top\bSigma(P) v.
\]
Thus for a unit vector $v \in\R^q$, the spread of $P$ in direction
$v$ may be quantified by $\sqrt{v^\top\bSigma(P) v}$, the standard
deviation of $v^\top X$.

There are various good reasons to use different definitions of the
center $\bmu(P)$ and scatter matrix $\bSigma(P)$ of the distribution
$P$. For instance, suppose that $P$ has a unimodal density $f$ and is
elliptically symmetric with center $\mu\in\R^q$ and symmetric,
positive definite scatter matrix $\Sigma\in\R^{q\times q}$. That
means, $f$ may be written as
\[
f(x) \ = \ \tilde{f} \bigl( (x - \mu)^\top\Sigma^{-1} (x - \mu)
\bigr)
\]
for some decreasing function $\tilde{f} : [0,\infty) \to[0,\infty
)$. Then it would be natural to define the center of $P$ to be $\bmu
(P) := \mu$, and a scatter matrix $\bSigma(P)$ of $P$ should be equal
or at least proportional to $\Sigma$, even if $\int\|x\| \, P(dx)$ or
$\int\|x\|^2 \, P(dx)$ is infinite. A related issue is robustness: One
would like $\bmu(P)$ and $\bSigma(P)$ to change little if $P$ is
replaced with $(1 - \epsilon) P + \epsilon P'$ for some small number
$\epsilon> 0$ and an arbitrary distribution $P'$ on $\R^q$. Another
way to define robustness is weak continuity: It would be desirable that
$\bmu(P') \to\bmu(P)$ and $\bSigma(P') \to\bSigma(P)$ whenever
$P' \to P$ weakly.

Some people may feel overwhelmed by the diversity of scatter
functionals which are available. However, comparing two or more
different scatter matrices $\bSigma(P)$ allows one to find interesting
structures in the distribution $P$. For an explanation of this paradigm
and examples we refer to Nordhausen et al.\ (\citeyear
{Nordhausen_etal_2008}), Tyler et al.\ (\citeyear{Tyler_etal_2009})
and the references cited therein.

A special class of location and scatter functionals are multivariate
$M$-funct\-ionals. Introduced by Maronna (\citeyear{Maronna_1976}),
their properties have been analyzed by numerous authors, an incomplete
list of references being Huber (\citeyear{Huber_1981}), Hampel et al.\
(\citeyear{Hampel_etal_1986}), Tyler (\citeyear{Tyler_1987a,Tyler_1987b}),
Kent and Tyler (\citeyear{Kent_Tyler_1988,Kent_Tyler_1991})
and Dudley et~al.\ (\citeyear{Dudley_etal_2009}). In particular, Dudley
et~al.\ (\citeyear{Dudley_etal_2009}) prove existence and uniqueness of multivariate
$t$-functionals of location and scatter, generalizing results of Kent
and Tyler (\citeyear{Kent_Tyler_1988,Kent_Tyler_1991}).
Moreover, they provide an in-depth analysis of weak continuity and
differentiability of such functionals which implies consistency and
asymptotic normality of the corresponding estimators. Similar
considerations have been made by D\"umbgen (\citeyear{Duembgen_1998})
for the special $M$-functional of scatter due to Tyler (\citeyear
{Tyler_1987a}). As to the robustness of multivariate $t$-functionals of
location and scatter in terms of so-called breakdown points, we refer
to D\"umbgen and Tyler (\citeyear{Duembgen_Tyler_2005}) and the
references therein.

In many settings the location parameter $\bmu(P)$ is merely a nuisance
parameter while the main interest lies on the scatter matrix $\bSigma
(P)$. Moreover, often one only needs to know $\bSigma(P)$ up to a
positive scaling factor, e.g.\ when defining principal components or
correlations. On the other hand, a desirable feature is the following
block independence property: If $P$ describes the distribution of $X =
[X_1^\top, X_2^\top]^\top$ with two stochastically independent
random vectors $X_1 \in\R^{q(1)},X_2 \in\R^{q(2)}$, then $\bSigma
(P)$ should be block diagonal, i.e.
\[
\bSigma(P) \ = \
\begin{bmatrix}
\bSigma_1(P) & 0 \\
0 & \bSigma_2(P)
\end{bmatrix}
\]
with $\bSigma_i(P) \in\R^{q(i)\times q(i)}$. Unfortunately, the
$M$-functionals just mentioned do not have this property. However, as
explained later, any reasonable $M$-functional of scatter has the block
independence property when it is applied to the symmetrized
distribution $\LL(X - X')$ with independent random vectors $X, X' \sim
P$. (Here and throughout $\LL(Y)$ denotes the distribution of a random
variable $Y$, and $Y \sim Q$ is shorthand for ``$Y$ has distribution
$Q$''.) Note also that the symmetrized distribution $\LL(X - X')$ is
centered around $0 \in\R^q$, so we may avoid the estimation of a
location parameter and focus on estimation of scatter only. This trick
is used by many authors, e.g.\ Croux et al.\ (\citeyear
{Croux_etal_1994}), D\"umbgen (\citeyear{Duembgen_1998}), Sirki\"a et
al.\ (\citeyear{Sirkiae_etal_2007}), Nordhausen et al.\ (\citeyear
{Nordhausen_etal_2008}) and Tyler et al.\
(\citeyear{Tyler_etal_2009}).\looseness=1

Applying the $M$-functionals $\bmu(\cdot)$ and $\bSigma(\cdot)$ to
the empirical distribution $\hat{P}$ of independent random vectors
$X_1, X_2, \ldots, X_n$ with distribution $P$ yields $M$-\textit{esti\-mators}
$\hat{\mu} = \bmu(\hat{P})$ and $\hat{\Sigma} =
\bSigma(\hat{P})$.

The remainder of this survey is organized as follows: In Section~\ref
{sec:Equivariance} we review the concepts of affine and linear
equivariance and their main consequences. In Section~\ref{sec:MLE to
M} we motivate $M$-functionals of location and scatter by various
maximum-likelihood and other estimation problems. After these
introductory sections, we start with the main results about existence,
uniqueness, weak continuity and differentiability of the $M$-functionals.

The main part of our paper is devoted to scatter-only functionals,
treated in Sections~\ref{sec:Scatter}, \ref{sec:AnalysisL} and \ref
{sec:Scatter 2}. This is done in a generalized framework with
matrix-valued random variables. By doing so we reveal a connection
between Tyler's (\citeyear{Tyler_1987a}) $M$-functional of scatter and
the estimation of proportional covariance matrices as treated by Flury
(\citeyear{Flury_1986}), Eriksen (\citeyear{Eriksen_1987}) and Jensen
and Johansen (\citeyear{Jensen_Johansen_1987}). Moreover, this general
framework allows us to treat a new class of scatter estimators, based
on symmetrizations of arbitrary order. Part of this material is new.
Section~\ref{sec:Scatter} contains the main results about existence
and uniqueness of the scatter functionals. Section~\ref{sec:AnalysisL}
provides analytical tools to derive the aforementioned and later
results. As realized by Auderset et al.\ (\citeyear
{Auderset_etal_2005}) in the context of multivariate (real or complex)
Cauchy distributions and by Wiesel (\citeyear{Wiesel_2012}), among
others, working with matrix exponentials and logarithms in a suitable
way provides valuable new insights, and we are utilizing this approach,
too. In particular, the target functions to be minimized turn out to be
(strictly) convex in a certain sense which is essential for uniqueness.
In our opinion, the resulting proofs are more intuitive than some
derivations in the original papers. Based on the analytical results in
Section~\ref{sec:AnalysisL}, we discuss weak continuity and weak
differentiability of scatter functionals in Section~\ref{sec:Scatter 2}.

Finally, in Section~\ref{sec:Location and Scatter} we review a trick
by Kent and Tyler (\citeyear{Kent_Tyler_1991}) to treat location and
scatter functionals based on multivariate $t$-distributions by means of
the scatter-only methods. This allows one to prove weak
differentiability and central limit theorems as in Dudley et al.\
(\citeyear{Dudley_etal_2009}).

Various auxiliary results and most proofs are deferred to Section~\ref
{sec:Proofs}.

\paragraph{Notation}
Throughout this paper, the standard Euclidean norm of a vector $v \in
\R^d$ is denoted by $\|v\| = \sqrt{v^\top v}$. For matrices $A,B \in
\R^{q \times d}$ we use either the operator or the Frobenius norm,
\begin{align*}
\|A\| \
&:= \ \max_{v \in\R^d \setminus\{0\}} \, \frac{\|Av\|}{\|v\|}
\ = \ \max_{v \in\R^d : \|v\| = 1} \, \|Av\|, \\
\|A\|_F \
&:= \ \Bigl( \sum_{i,j} A_{ij}^2 \Bigr)^{1/2}
\ = \ \langle A,A\rangle^{1/2},
\end{align*}
where
\[
\langle A,B\rangle\ := \ \sum_{i,j} A_{ij} B_{ij}
\ = \ \tr(A^\top B)
\ = \ \tr(A B^\top).
\]
Note that $\langle A,B\rangle$ defines an inner product on $\R^{q
\times d}$. If $\mathrm{vec}(A)$ and $\mathrm{vec}(B)$ denote vectors
in $\R^{qd}$ containing the columns of $A$ and $B$, respectively, then
$\langle A,B\rangle$ is just the usual inner product $\mathrm
{vec}(A)^\top\mathrm{vec}(B)$. We shall consider the following
subsets of $\Rqq$:
\begin{align*}
\Rqqns\
:=& \ \bigl\{ A \in\Rqq: A \ \text{nonsingular} \bigr\}, \\
\Rqqsym\
:=& \ \bigl\{ A \in\Rqq: A = A^\top\bigr\}, \\
\Rqqsympsd\
:=& \ \bigl\{ A \in\Rqqsym: A \ \text{positive semidefinite} \bigr
\} \\
=& \ \bigl\{ A\in\Rqqsym: \lambda_{\rm min}(A) \ge0 \bigr\}, \\
\Rqqsympd\
:=& \ \bigl\{ A \in\Rqqsym: A \ \text{positive definite} \bigr\} \\
=& \ \bigl\{ A\in\Rqqsym: \lambda_{\rm min}(A) > 0 \bigr\}.
\end{align*}
With $\lambda_{\rm min}(A)$ and $\lambda_{\rm max}(A)$ we denote the
smallest and largest real eigenvalue of a square matrix $A$. If $A \in
\Rqq$ has only real eigenvalues (e.g.\ if $A = A^\top$), then
$\lambda_1(A) \ge\lambda_2(A) \ge\cdots\ge\lambda_q(A)$ are its
ordered eigenvalues. The identity matrix in $\Rqq$ is denoted by $I_q$.

In the sequel we will introduce further notation and various
conditions. For the reader's convenience, these are listed once more at
the very end of this paper.


\section{Affine and linear equivariance}
\label{sec:Equivariance}

Affine and linear equivariance are key concepts in connection with
estimation of location and scatter. In what follows, let $\PP$ be a
family of probability distributions on $\R^q$.
For $P \in\PP$, a vector $a \in\R^q$ and a matrix $B \in\Rqqns$ let
\[
P^B \ := \ \LL(BX)
\quad\text{and}\quad
P^{a,B} \ := \ \LL(a + BX)
\quad\text{where} \ X \sim P.
\]

\begin{Definition}[Linear equivariance]
Suppose that $\PP$ is \textit{linear invariant} in the sense that
$P^B \in\PP$ for arbitrary $P \in\PP$ and $B \in\Rqqns$. A
scatter functional $\bSigma: \PP\to\Rqqsympsd$ is called
\textit{linear equivariant} if
\[
\bSigma(P^B) \ = \ B \bSigma(P) B^\top
\]
for arbitrary $P \in\PP$ and $B \in\Rqqns$.
\end{Definition}

\begin{Definition}[Affine equivariance]
Suppose that $\PP$ is \textit{affine invariant} in the sense that
$P^{a,B} \in\PP$ for arbitrary $P \in\PP$, $a \in\R^q$ and $B \in
\Rqqns$. Consider a location functional $\bmu: \PP\to\R^q$ and a
scatter functional $\bSigma: \PP\to\Rqqsympsd$. These functionals
are called \textit{affine equivariant} if
\[
\bmu(P^{a,B}) \ = \ a + B \bmu(P)
\quad\text{and}\quad
\bSigma(P^{a,B}) \ = \ B \bSigma(P) B^\top
\]
for arbitrary $P \in\PP$, $a \in\R^q$ and $B \in\Rqqns$.
\end{Definition}

These definitions are clearly motivated by the mean vector $\bmu(P)$
and covariance matrix $\bSigma(P)$, where $\PP$ consists of all
distributions $P$ with finite integral $\int\|x\|^2 \, P(dx)$.
Whenever we talk about affine or linear equivariant functionals on a
set $\PP$, we assume tacitly that $\PP$ is affine or linear invariant.

Obviously, affine equivariance of a scatter functional $\bSigma(\cdot
)$ implies its linear equivariance. Equivariance properties of location
and scatter functionals yield various desirable properties which are
summarized in two lemmas below. Let us first recall two symmetry
properties of a distribution $P$:

\begin{Definition}[Spherical and elliptical symmetry]
Let $X$ be a random vector with distribution $P$ on $\R^q$.

\vspace*{3pt}
\noindent
\textbf{(i)} \ The distribution $P$ is called spherically symmetric
(around $0$) if the distributions of $X$ and $UX$ coincide for any
orthogonal matrix $U \in\Rqq$.

\noindent
\textbf{(ii)} \ The distribution $P$ is called elliptically symmetric
with center $\mu\in\R^q$ and scatter matrix $\Sigma\in\Rqqsympd$,
if the distribution of $\Sigma^{-1/2}(X - \mu)$ is spherically symmetric.
\end{Definition}

If the distribution $P$ admits a density $f$, elliptical symmetry with
center $\mu$ and scatter matrix $\Sigma$ means that $f(x)$ is a
function of the squared Mahalanobis distance $(x - \mu)^\top\Sigma
^{-1} (x - \mu)$ only. In particular, if $P$ is spherically symmetric,
$f(x)$ depends only on the norm $\|x\|$.

Note that the scatter matrix $\Sigma$ of an elliptically symmetric
distribution is not unique. One could replace $\Sigma$ with $c\Sigma$
for any $c > 0$.

\begin{Lemma}[Some consequences of linear equivariance]
\label{lem: linear equivariance}
Let $\bSigma: \PP\to\Rqqsympsd$ be a linear equivariant functional
of scatter, and let $X$ be a random vector with distribution $P
\in\PP$.\vadjust{\goodbreak}

\noindent
\textbf{(i)} \ Let $J$ be a subset of $\{1,2,\ldots,q\}$ with two or
more elements. Suppose that the distributions of $X$ and $(X_{\pi
(i)})_{i=1}^q$ coincide for any permutation $\pi$ of $\{1,2,\ldots,q\}
$ such that $\pi(i) = i$ whenever $i \not\in J$. Then there exist
numbers $a = a(P)$ and $b = b(P)$ such that for arbitrary indices $j,k
\in J$,
\[
\bSigma(P)_{jk} \ = \
\begin{cases}
a & \text{if} \ j = k, \\
b & \text{if} \ j \ne k.
\end{cases}
\]

\noindent
\textbf{(ii)} \ Suppose that for a given sign vector $s \in\{-1,1\}
^q$, the distributions of $X$ and $(s_i X_i)_{i=1}^q$ coincide. Then
\[
\bSigma(P)_{ij} \ = \ 0
\quad
\text{whenever} \ \ s_i \ne s_j.
\]

\noindent
\textbf{(iii)} \ If $P$ is elliptically symmetric with center $0 \in
\R^q$ and scatter matrix $\Sigma\in\Rqqsympd$, then
\[
\bSigma(P) \ = \ c(P) \Sigma
\]
for some number $c(P) \ge0$.
\end{Lemma}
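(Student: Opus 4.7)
The plan is to exploit linear equivariance purely as an invariance identity: whenever $P^B = P$ for some $B \in \Rqqns$, the defining equation $\bSigma(P^B) = B\bSigma(P)B^\top$ reduces to $\bSigma(P) = B\bSigma(P)B^\top$. Each of (i)--(iii) will follow by specialising $B$ so that this relation forces the claimed structure on $\bSigma(P)$.

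For part (i), I would associate to each permutation $\pi$ of $\{1,\ldots,q\}$ fixing indices outside $J$ the permutation matrix $B_\pi$ characterised by $(B_\pi X)_i = X_{\pi(i)}$. The invariance hypothesis gives $P^{B_\pi} = P$, and a direct computation of $(B_\pi \bSigma(P) B_\pi^\top)_{ij}$ yields $\bSigma(P)_{ij} = \bSigma(P)_{\pi(i),\pi(j)}$. Since any two indices in $J$ (respectively any two ordered pairs of distinct indices in $J$) can be interchanged by such a $\pi$, the diagonal entries indexed by $J$ share a common value $a = a(P)$, and the off-diagonal entries share a common value $b = b(P)$.

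For part (ii), the natural choice is $B = \diag(s)$, which lies in $\Rqqns$ and satisfies $BX = (s_i X_i)_{i=1}^q$, so $P^B = P$ by assumption. The identity $\bSigma(P) = B\bSigma(P)B^\top$ then reads $\bSigma(P)_{ij} = s_i s_j\, \bSigma(P)_{ij}$, and for $s_i \ne s_j$ the factor $s_i s_j = -1$ forces $\bSigma(P)_{ij} = 0$.

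For part (iii), I would set $Y := \Sigma^{-1/2}(X-0) = \Sigma^{-1/2}X$, whose distribution $Q$ is spherically symmetric by assumption, and note that $P = Q^{\Sigma^{1/2}}$. Linear equivariance then gives $\bSigma(P) = \Sigma^{1/2}\bSigma(Q)\Sigma^{1/2}$, so the task reduces to proving $\bSigma(Q) = c\, I_q$ for some $c \ge 0$. Spherical symmetry means $Q^U = Q$ for every orthogonal $U$; specialising to permutation matrices and to sign-change matrices allows me to invoke parts (i) (with $J = \{1,\ldots,q\}$) and (ii) applied to $Q$, yielding that all diagonal entries of $\bSigma(Q)$ coincide with some value $a$ and all off-diagonal entries coincide with some value $b$, while (ii) with, say, $s = (1,-1,1,\ldots,1)$ forces $b=0$. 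Hence $\bSigma(Q) = a I_q$ with $a \ge 0$ by positive semidefiniteness, and the result follows with $c(P) = a$.

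There is no deep obstacle here; the whole lemma is a sequence of elementary applications of $\bSigma(P^B) = B\bSigma(P)B^\top$. The only step requiring a little care is the bookkeeping for (i)—verifying that a single permutation can be chosen to relate any prescribed pair of diagonal (or off-diagonal) entries within the block indexed by $J$—and the observation in (iii) that parts (i) and (ii) already suffice to pin down $\bSigma(Q)$, making an independent "commuting with all orthogonal matrices implies scalar" argument unnecessary.
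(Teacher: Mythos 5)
Your proof is correct and follows essentially the same route as the paper's: permutation matrices for (i), $B=\diag(s)$ for (ii), and reduction of the elliptical case to the spherical case via $B=\Sigma^{1/2}$, with the spherical case settled by combining (i) and (ii). The only cosmetic difference is that the paper fixes $j_1=\min(J)$, $j_2=\max(J)$ as reference indices for the bookkeeping in (i), which is exactly the "little care" you flag.
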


\begin{Lemma}[Some consequences of affine equivariance]
\label{lem: affine equivariance}
Let $\bmu: \PP\to\R^q$ and $\bSigma: \PP\to\Rqqsympsd$ be
affine equivariant functionals of location and scatter, respectively,
and let $X$ be a random vector with distribution $P \in\PP$.

\vspace*{3pt}\noindent
\textbf{(i)} \ Suppose that for a given vector $s \in\{-1,1\}^q$, the
distributions of $X$ and $(s_iX_i)_{i=1}^q$ coincide. Then
\[
\bmu(P)_i \ = \ 0
\quad
\text{whenever} \ \ s_i = -1.
\]

\noindent
\textbf{(ii)} \ If $P$ is elliptically symmetric with center $\mu\in
\R^q$ and scatter matrix $\Sigma\in\Rqqsympd$, then
\[
\bmu(P) \ = \ \mu
\quad\text{and}\quad
\bSigma(P) \ = \ c(P) \Sigma
\]
for some number $c(P) \ge0$.
\end{Lemma}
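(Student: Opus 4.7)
For part (i), the natural strategy is to plug into the defining identity of affine equivariance a pair $(a,B)$ for which $P^{a,B} = P$. The distributional hypothesis amounts to $P^{0,B} = P$ with $B := \diag(s) \in \Rqqns$, so affine equivariance gives $\bmu(P) = B\bmu(P)$, i.e.\ $(I_q - B)\bmu(P) = 0$. The diagonal matrix $I_q - B$ has entry $1 - s_i$ at position $i$, which is $0$ when $s_i = 1$ and $2$ when $s_i = -1$; hence $\bmu(P)_i = 0$ whenever $s_i = -1$.

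For the scatter claim of part (ii), I would reduce to Lemma~\ref{lem: linear equivariance}(iii) by translation. The distribution $P_0 := P^{-\mu, I_q}$ is elliptically symmetric with center $0$ and scatter matrix $\Sigma$, and affine equivariance (with $a = -\mu$, $B = I_q$) gives $\bSigma(P_0) = \bSigma(P)$. Since affine equivariance implies linear equivariance (take $a = 0$), Lemma~\ref{lem: linear equivariance}(iii) applies to $P_0$ and yields $\bSigma(P) = c(P)\, \Sigma$ for some $c(P) \ge 0$.

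For the identity $\bmu(P) = \mu$ I would again produce an invariance of the form $P^{a,B} = P$. Because $Y := \Sigma^{-1/2}(X - \mu)$ is spherically symmetric, $\LL(-Y) = \LL(Y)$; pushing this equality forward by $y \mapsto \mu + \Sigma^{1/2} y$ yields $\LL(2\mu - X) = \LL(X)$, that is, $P = P^{2\mu, -I_q}$. Affine equivariance of $\bmu$ then reads $\bmu(P) = 2\mu - \bmu(P)$, and hence $\bmu(P) = \mu$.

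There is no serious obstacle here: every step is a one-line consequence of the equivariance identities, provided the right symmetry of $P$ is identified. The only mild subtleties are translating to a centered distribution before invoking the linear-equivariance lemma, and using the existence of $\Sigma^{-1/2}$ (guaranteed by $\Sigma \in \Rqqsympd$) to convert spherical symmetry of $Y$ into the required invariance of $P$ under $(2\mu, -I_q)$.
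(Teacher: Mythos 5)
Your proposal is correct and follows essentially the same route as the paper: part (i) via the invariance under $B=\diag(s)$, and part (ii) via reduction to a centered distribution together with Lemma~\ref{lem: linear equivariance}(iii). The only cosmetic difference is that for $\bmu(P)=\mu$ you apply the reflection $P=P^{2\mu,-I_q}$ directly, whereas the paper standardizes to the spherically symmetric $P'=\LL(\Sigma^{-1/2}(X-\mu))$ and uses part (i) there; both are one-line consequences of the same symmetry.
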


\begin{Remark}[Symmetrization and the block independence property]
Suppose that $X \sim P$ may be written as $X = [X_1^\top, X_2^\top
]^\top$ with two independent subvectors $X_i \in\R^{q(i)}$, $q(1) +
q(2) = q$. Let $X'$ be an independent copy of $X$. If $\bSigma: \PP
\to\Rqqsympsd$ is a linear equivariant scatter functional, and if
$\tilde{P} := \LL(X - X')$ belongs to $\PP$,
\[
\bSigma(\tilde{P}) \ = \
\begin{bmatrix}
\bSigma_1(\tilde{P}) & 0 \\
0 & \bSigma_2(\tilde{P})
\end{bmatrix}
\]
with $\bSigma_i(\tilde{P}) \in\R^{q(i)\times q(i)}$. This follows
from Lemma~\ref{lem: linear equivariance}~(ii), applied to $\tilde{X}
\sim\tilde{P}$ in place of $X \sim P$ and $s_i := 1_{[i \le q(1)]} -
1_{[i > q(1)]}$. If $\PP$ is even affine invariant and $\bmu: \PP\to
\R^q$ an affine equivariant location functional, then $\bmu(\tilde
{P}) = 0$ by Lemma~\ref{lem: affine equivariance}.
\end{Remark}


\section[From maximum-likelihood estimation to $M$-functionals]{From maximum-likelihood estimation to $\boldsymbol{M}$-functionals}
\label{sec:MLE to M}

In this section we describe various estimation problems and the
$M$-functionals which they lead to.

\subsection{Estimation in location-scatter families}
\label{subsec:LocationScatter}

Let $X_1, X_2, \ldots, X_n$ be independent random vectors with unknown
distribution~$P$. As a model for $P$ we consider a location-scatter
family constructed as follows: Let $\tilde{f} : [0,\infty) \to
[0,\infty)$ satisfy
\[
\tilde{c} := \int_{\R^q} \tilde{f}(\|x\|^2) \, dx \ \in\ (0,\infty
).
\]
For any location parameter $\mu\in\R^q$ and scatter parameter
$\Sigma\in\Rqqsympd$,
\[
f_{\mu,\Sigma}(x) \ := \ \tilde{c}^{-1} \det(\Sigma)^{-1/2}
\tilde{f} \bigl( (x - \mu)^\top\Sigma^{-1} (x - \mu) \bigr)
\]
defines a probability density $f_{\mu,\Sigma}$ on $\R^q$. Assuming
that $P$ has a density belonging to this family $(f_{\mu,\Sigma
})_{\mu,\Sigma}$, a maximum-likelihood estimator of $(\mu,\Sigma)$
is a maximizer $(\hat{\mu},\hat{\Sigma})$ of the likelihood function
\[
(\mu,\Sigma) \ \mapsto\ \prod_{i=1}^n f_{\mu,\Sigma}(X_i).
\]
In other words, $(\hat{\mu},\hat{\Sigma})$ minimizes
\[
\hat{L}(\mu,\Sigma)
\ := \ \frac{1}{n} \sum_{i=1}^n
\rho\bigl( (X_i - \mu)^\top\Sigma^{-1} (X_i - \mu) \bigr)
+ \log\det(\Sigma)
\]
with
\[
\rho(s) \ := \ - 2 \log\tilde{f}(s).
\]
The expected value of $\hat{L}(\mu,\Sigma)$ equals
\begin{equation}
\label{eq:def L 1}
L(\mu,\Sigma,P)
\ := \ \int\rho\bigl( (x - \mu)^\top\Sigma^{-1} (x - \mu) \bigr
) \, P(dx)
+ \log\det(\Sigma),
\end{equation}
provided this integral exists, and
\[
\hat{L}(\mu,\Sigma) \ = \ L(\mu,\Sigma,\hat{P})
\]
with $\hat{P}$ denoting the empirical distribution $n^{-1} \sum
_{i=1}^n \delta_{X_i}$ of the observations $X_i$. Consequently we
focus on $L(\mu,\Sigma,P)$ for arbitrary distributions $P$, keeping
in mind that $P$ could be a ``true'' or an empirical distribution.

Suppose that $P$ has a density $f$ which may but need not belong to the
model $(f_{\mu,\Sigma})_{\mu,\Sigma}$ and such that $\int f(x) \log
f(x) \, dx$ exists in $\R$. Then
\begin{align*}
L(\mu,\Sigma,P) - 2 \log\tilde{c} \
&= \ - 2 \int f(x) \log f_{\mu,\Sigma}(x) \, dx \\
&= \ - 2 \int f(x) \log f(x) \, dx + 2 D(f, f_{\mu,\Sigma})
\end{align*}
with the Kullback-Leibler divergence
\[
D(f,f_{\mu,\Sigma}) \ := \ \int f(x) \log\bigl( f(x) / f_{\mu
,\Sigma}(x) \bigr) \, dx.\vadjust{\goodbreak}
\]
It is well-known that $D(f, f_{\mu,\Sigma}) \ge0$ with equality if,
and only if, $f = f_{\mu,\Sigma}$ almost everywhere.
Thus minimizing $L(\mu,\Sigma,P)$ w.r.t.\ $(\mu,\Sigma)$ may be
viewed as approximating $P$ by one of the densities $f_{\mu,\Sigma}$
in terms of the Kullback-Leibler divergence.

\begin{Example}[Gaussian distributions]
Multivariate (nondegenerate) Gaussian distributions correspond to
$\tilde{f}(s) := \exp(- s/2)$ and $\tilde{c} := (2\pi)^{q/2}$,
i.e.\ $\rho(s) := s$. Suppose that $P$ has mean vector $\bmu(P)$,
finite integral $\int\|x\|^2 \, P(dx)$ and nonsingular covariance
matrix $\bSigma(P)$. Then
\begin{align*}
L(\mu,\Sigma,P) \
&= \ \int(x - \mu)^\top\Sigma^{-1} (x - \mu) \, P(dx) + \log\det
(\Sigma) \\
&= \ \int(x - \bmu(P))^\top\Sigma^{-1} (x - \bmu(P)) \, P(dx) +
\log\det(\Sigma)
\\
&\quad+ \ (\mu- \bmu(P))^\top\Sigma^{-1}(\mu- \bmu(P)).
\end{align*}
Hence for any fixed $\Sigma$, the unique minimizer of $\mu\mapsto
L(\mu,\Sigma,P)$ equals $\mu= \bmu(P)$. Moreover,
\begin{align*}
L(\bmu(P),\Sigma,P) \
&= \ \tr(\Sigma^{-1} \bSigma(P)) + \log\det(\Sigma) \\
&= \ \tr(\Sigma^{-1} \bSigma(P)) - \log\det(\Sigma^{-1} \bSigma(P))
+ \log\det(\bSigma(P)).
\end{align*}
Note that $\tr(\Sigma^{-1} \bSigma(P)) - \log\det(\Sigma^{-1}
\bSigma(P))$ equals $\tr(B) - \log\det(B)$ with the symmetric
matrix $B := \Sigma^{-1/2} \bSigma(P) \Sigma^{-1/2}$. If $\lambda_1
\ge\lambda_2 \ge\cdots\ge\lambda_q > 0$ denote the eigenvalues of
$B$, then
\[
\tr(B) - \log\det(B) \ = \ \sum_{i=1}^q (\lambda_i - \log\lambda
_i) \ \ge\ q
\]
with equality if, and only if, all eigenvalues $\lambda_i$ are equal
to one, i.e.\ if $\Sigma= \bSigma(P)$. Thus $(\bmu(P),\bSigma(P))$
is the unique minimizer of $L(\cdot,\cdot,P)$.
\end{Example}

The range of distributions $P$ for which $L(\mu,\Sigma,P)$ is
well-defined in $\R$ for arbitrary $(\mu,\Sigma)$ may become larger
if we replace the term $\rho\bigl( (x - \mu)^\top\Sigma^{-1} (x -
\mu) \bigr)$ with a difference
\[
\rho\bigl( (x - \mu)^\top\Sigma^{-1} (x - \mu) \bigr)
- \rho\bigl( (x - \mu_o)^\top\Sigma_o^{-1} (x - \mu_o) \bigr)
\]
for some $(\mu_o,\Sigma_o)$. The choice of the latter pair is
irrelevant, so we use $\mu_o = 0$ and $\Sigma_o = I_q$, where $I_q$
denotes the unit matrix in $\Rqq$.

\begin{Definition}[$M$-functionals of location and scatter]
\label{def:L}
Let $\rho: [0,\infty) \to\R$ be some continuous function. Further
let $\PP$ be the set of all probability distributions $P$ on $\R^q$
such that
\begin{equation}
\label{eq:def L 2}
L(\mu,\Sigma,P)
\ := \ \int\bigl[ \rho\bigl( (x - \mu)^\top\Sigma^{-1} (x - \mu
) \bigr)
- \rho(x^\top x) \bigr] \, P(dx)
+ \log\det(\Sigma)
\end{equation}
is well-defined in $\R$ for arbitrary $(\mu,\Sigma) \in\R^q \times
\Rqqsympd$.

With $\PP_\rho$ we denote the set of all distributions $P \in\PP$
such that $L(\cdot,\cdot,P)$ has a unique minimizer $(\bmu
(P),\bSigma(P))$. This defines an $M$-functional $\bmu: \PP_\rho\to
\R^q$ of location and an $M$-functional $\bSigma: \PP_\rho\to
\Rqqsympd$ of scatter.
\end{Definition}

\paragraph{Affine equivariance}
The set $\PP$ in Definition~\ref{def:L} is affine invariant. Indeed,
if $X \sim P \in\PP$ and $X' := a + BX \sim P^{a,B}$, then elementary
calculations show that
\begin{align*}
\rho\bigl(
& (X' - \mu')^\top{\Sigma'}^{-1} (X' - \mu') \bigr)
- \rho({X'}^\top X') \\
&= \ \Bigl[ \rho\bigl( (X - \mu)^\top\Sigma^{-1} (X - \mu) \bigr)
- \rho(X^\top X) \Bigr] \\
&\quad- \ \Bigl[ \rho\bigl( (X - \mu'')^\top{\Sigma''}^{-1} (X -
\mu'') \bigr)
- \rho(X^\top X) \Bigr],
\end{align*}
where $\mu' := a + B\mu$, $\Sigma' := B \Sigma B^\top$ and $\mu''
:= - B^{-1} a$, $\Sigma'' := (B^\top B)^{-1}$. Since $\log\det
(\Sigma') = \log\det(\Sigma) + 2 \log|\det(B)| = \log\det
(\Sigma) - \log\det(\Sigma'')$, we arrive at the key equation
\begin{equation}
\label{eq:equivariance L}
L \bigl( a + B\mu, B\Sigma B^\top, P^{a,B} \bigr)
\ = \ L(\mu,\Sigma,P) + c(a,B,P)
\end{equation}
with $c(a,B,P) := - L \bigl( -B^{-1}a, (B^\top B)^{-1}, P \bigr)$. In
particular, the set $\PP_\rho$ is affine invariant, and the
$M$-functionals $\bmu(\cdot)$, $\bSigma(\cdot)$ are affine equivariant.

\begin{Example}[Multivariate $t$-distributions]
The multivariate student-distrib\-utions are generated by $\tilde
{f}(s) := (\nu+ s)^{-(\nu+q)/2}$ for a fixed parameter $\nu> 0$, the
``degrees of freedom'',
and $\tilde{c} = \nu^{-\nu/2} \pi^{q/2} \Gamma(\nu/2) / \Gamma
((\nu+ q)/2)$. Here
\[
\rho(s) \ = \ (\nu+ q) \log(\nu+ s).
\]
With this choice of $\rho$, definition \eqref{eq:def L 2} yields
\begin{align}
\nonumber
L_\nu&(\mu,\Sigma,P) \\
\label{eq:def L_nu}
&= \ (\nu+ q) \int\log\Bigl(
\frac{\nu+ (x - \mu)^\top\Sigma^{-1} (x - \mu)}{\nu+ \|x\|^2}
\Bigr)
\, P(dx)
+ \log\det(\Sigma).
\end{align}
Since the integrand is continuous and bounded on $\R^q$ for any fixed
$(\mu,\Sigma)$, the set $\PP$ is just the set of \textit{all}
probability distributions on $\R^q$. In later sections we shall derive
a precise description of the corresponding subset $\PP_\rho$.
\end{Example}

\subsection{Tyler's (1987) $M$-functional of scatter and more}
\label{subsec:Tyler 1987 and more}

\paragraph{A maximum-likelihood estimator for directional data}
Tyler (\citeyear{Tyler_1987a,Tyler_1987b}) introduced a
particular $M$-estimator of scatter which may be motivated as follows:
Suppose that $X_1$, $X_2$, \ldots, $X_n$ are independent random
vectors with possibly different distributions $P_1$, $P_2$, \ldots,
$P_n$ on $\R^q$. However, suppose that each $P_i$ satisfies $P_i(\{0\}
) = 0$ and is elliptically symmetric with center $0$ and a common
scatter matrix $\Sigma$. This assumption means that $X_i = R_i B U_i$
with $B := \Sigma^{1/2}$ and $2n$ stochastically independent random
variables $R_1, R_2, \ldots, R_n > 0$ and $U_1, U_2, \ldots, U_n$
uniformly distributed on the unit sphere $\mathbb{S}^{q-1}$ of $\R
^q$. In particular, the directional vectors $V_i := \|X_i\|^{-1}X_i = \|
BU_i\|^{-1} BU_i$ are independent and identically distributed random vectors.
One can show that $V_i$ possesses a so called angular central Gaussian
distribution, i.e.
its distribution is absolutely continuous with respect to the uniform
distribution on $\mathbb{S}^{q-1}$ with density
\[
g_\Sigma(v) \ := \ \det(\Sigma)^{-1/2} (v^\top\Sigma^{-1} v)^{-q/2},
\]
see e.g. Watson (\citeyear{Watson_1983}). Consequently, a
maximum-likelihood estimator for $\Sigma$ is given by a maximizer of
the target function $L(\Sigma,\hat{P})$ over all matrices $\Sigma\in
\Rqqsympd$, where $\hat{P}$ is again the empirical distribution of
the $X_i$, and
\begin{equation}
\label{eq:def L_0}
L(\Sigma, P) \ := \ q \int\log\Bigl( \frac{x^\top\Sigma^{-1}
x}{x^\top x} \Bigr) \, P(dx)
+ \log\det(\Sigma)
\end{equation}
for any distribution $P$ on $\R^q$ with $P(\{0\}) = 0$. Note that
$L(\Sigma,P) = L(c\Sigma,P)$ for any $c > 0$. To achieve uniqueness
of a minimizer, we have to impose an additional constraint, e.g.
\[
\det(\Sigma) \ \stackrel{!}{=} \ 1,
\]
following Paindaveine's (\citeyear{Paindaveine_2008}) advice.

\paragraph{Estimation of proportional covariance matrices}
Suppose that one observes independent random matrices $S_1, S_2, \ldots
, S_K \in\Rqqsympsd$, where $S_i$ has a Wishart distribution
$\mathcal{W}_q(c_i \Sigma, m_i)$. The degrees of freedom, $m_1, m_2,
\ldots, m_K$, are given, while $c_1, c_2, \ldots, c_K > 0$ and
$\Sigma\in\Rqqsympd$ are unknown parameters.

As an explicit example, suppose that we observe independent random
vectors $X_{ij} \in\R^q$ for $1 \le i \le K$ and $1 \le j \le n_i$,
where $n_i = m_i + 1 \ge2$ and
\[
X_{ij}^{} \ \sim\ \NN_q(\mu_i^{}, c_i^{} \Sigma)
\]
with unknown means $\mu_i \in\R^q$. With $\bar{X}_i := n_i^{-1}
\sum_{j=1}^{n_i} X_{ij}$, the standard estimator of $\mu_i$, it is
well-known that
\[
S_i \ := \ \sum_{j=1}^{n_i} (X_{ij} - \bar{X}_i)(X_{ij} - \bar
{X}_i)^\top
\ \sim\ \mathcal{W}_q(c_i \Sigma, m_i).
\]

Recalling that $\mathcal{W}_q(\Gamma,m)$ stands for the distribution
of $\sum_{j=1}^m Y_i^{} Y_i^\top$ with independent random vectors
$Y_1, \ldots, Y_m \sim\NN_q(0,\Gamma)$, the log-likelihood function
times $-2$ may be written as
\begin{equation}
\label{eq:2NLL proportional covariances}
\sum_{i=1}^K \bigl(
c_i^{-1} \tr(\Sigma^{-1} S_i) + q m_i \log c_i + m_i \log\det
(\Sigma) \bigr).
\end{equation}
Minimization of this function was treated by Flury (\citeyear
{Flury_1986}), Eriksen (\citeyear{Eriksen_1987}) and Jensen and
Johansen (\citeyear{Jensen_Johansen_1987}). The proposed algorithms
rely on the fact that \eqref{eq:2NLL proportional covariances}, as a
function of the two arguments $\Sigma$ and $c = (c_i)_{i=1}^K$, is
easily minimized if one of the two arguments is fixed. For fixed
$\Sigma$, the unique minimizer is
\[
c(\Sigma) \ := \ q^{-1} \bigl( m_i^{-1} \tr(\Sigma^{-1} S_i) \bigr
)_{i=1}^K,
\]
whereas for fixed $c$, the unique minimizer is
\[
\Sigma(c) \ := \ m_+^{-1} \sum_{i=1}^K c_i^{-1} S_i
\]
with $m_+ := \sum_{i=1}^K m_i$. If focusing on the estimation of the
matrix parameter $\Sigma$, we may plug $c(\Sigma)$ into \eqref
{eq:2NLL proportional covariances} and try to minimize the resulting
function of $\Sigma$. Up to an additive term and a scaling factor
$m_+^{-1}$, the latter function equals
\begin{equation}
\label{eq:2NLL proportional covariances 2}
q \sum_{i=1}^K \frac{m_i}{m_+} \, \log\Bigl( \frac{\tr(\Sigma
^{-1} S_i)}{\tr(S_i)} \Bigr)
+ \log\det(\Sigma).
\end{equation}
Again one should impose some constraint such as $\det(\Sigma)
\stackrel{!}{=} 1$ to avoid non-uniqueness of the minimizer.

\paragraph{A generalized setting}
Note the similarity between \eqref{eq:def L_0} and \eqref{eq:2NLL
proportional covariances 2}. Consider the distribution $Q$ of the
random matrix $XX^\top$, where $X \sim P$.
Then $L(\Sigma,P)$ in \eqref{eq:def L_0} may be rewritten as
\[
q \int\log\Bigl( \frac{\tr(\Sigma^{-1}M)}{\tr(M)} \Bigr) \, Q(dM)
+ \log\det(\Sigma),
\]
where $M$ corresponds to $xx^\top$ with $x \in\R^q$. But \eqref
{eq:2NLL proportional covariances 2} is also of this form, this time
with the random distribution
\[
\hat{Q} \ := \ \sum_{i=1}^K \frac{m_i}{m_+} \, \delta_{S_i}^{}
\]
in place of $Q$. These considerations motivate the following definition.

\begin{Definition}[Generalized version of Tyler's $M$-functional of scatter]
\label{def:Tyler generalized}
For a distribution $Q$ on $\Rqqsympsd\setminus\{0\}$ and $\Sigma\in
\Rqqsympd$ we define
\[
L_0(\Sigma,Q)
\ := \ q \int\log\Bigl( \frac{\tr(\Sigma^{-1}M)}{\tr(M)} \Bigr)
\, Q(dM)
+ \log\det(\Sigma).
\]
If $L_0(\cdot,Q)$ has a unique minimizer $\Sigma$ satisfying $\det
(\Sigma) = 1$, then we denote it with $\Sigma_0(Q)$.
\end{Definition}

\subsection{Symmetrizations of arbitrary order}

For $k \ge2$ vectors $x_1, \ldots, x_k \in\R^q$ we define their
sample covariance matrix as
\[
S(x_1, \ldots, x_k) \ := \ \frac{1}{k-1} \sum_{i=1}^k (x_i - \bar
{x})(x_i - \bar{x})^\top
\]
with $\bar{x} := k^{-1} \sum_{i=1}^k x_i$. If $X_1, X_2, \ldots,
X_n$ are independent random vectors with distribution $P$ such that
$\int\|x\|^2 \, P(dx) < \infty$, then $S(X_1, X_2, \ldots, X_n)$ is
an unbiased estimator of the covariance matrix of $P$.\vadjust{\goodbreak} Elementary
calculations show that
\begin{align*}
S(X_1, X_2, \ldots, X_n) \
&= \ {\binom{n}{2}}^{-1} \sum_{1 \le i < j \le n}
2^{-1} (X_i - X_j)(X_i - X_j)^\top\\
&= \ {\binom{n}{2}}^{-1} \sum_{1 \le i < j \le n} S(X_i, X_j).
\end{align*}
More generally, for $2 \le k \le n$,
\[
S(X_1, X_2, \ldots, X_n)
\ = \ {\binom{n}{k}}^{-1} \sum_{1 \le i_1 < \cdots< i_k \le n}
S(X_{i_1}, \ldots, X_{i_k}).
\]
Instead of taking the average of all sample covariance matrices
$S(X_{i_1},\ldots,X_{i_k})$ one could apply Tyler's generalized
$M$-functional of scatter (Definition \ref{def:Tyler generalized}) or
other functionals of scatter to the random distribution
\[
{\binom{n}{k}}^{-1} \sum_{1 \le i_1 < \cdots< i_k \le n}
\delta_{S(X_{i_1}, \ldots, X_{i_k})}^{}
\]
on $\Rqqsympsd$, a measure-valued $U$-statistic (cf.\ Hoeffding,
\citeyear{Hoeffding_1948}). For $k = 2$ this approach was proposed by
D\"umbgen (\citeyear{Duembgen_1998}). Apart from the higher
computational complexity, trying $k \ge3$ is tempting.

\subsection{Simultaneous symmetrization in several samples}

Suppose we observe independent random vectors $X_{ij} \in\R^q$, where
$i=1,2,\ldots,K$ and $j = 1,2,\ldots,n_i$, $n_i \ge2$. Suppose that
$X_{ij}$ has an unknown elliptically symmetric distribution $P_i$ with
center $\mu_i \in\R^q$ and a common scatter matrix $\Sigma\in
\Rqqsympd$. In case of $P_i = \NN_q(\mu_i,\Sigma)$ one could
estimate $\Sigma$ by the usual pooled covariance matrix
\begin{align*}
\hat{\Sigma} \
&= \ \frac{1}{n_+ - K} \sum_{i=1}^K (n_i - 1) S(X_{i1},X_{i2},\ldots
,X_{in_i}) \\
&= \ \frac{2}{n_+ - K} \sum_{i=1}^K \frac{1}{n_i}
\sum_{1 \le j < \ell\le n_i} S(X_{ij},X_{i\ell}).
\end{align*}
Alternatively, one could estimate $\Sigma$ by a minimizer of \eqref
{eq:2NLL proportional covariances 2}. But in case of potentially
heavy-tailed distributions $P_i$, it might be
even better to apply Tyler's generalized $M$-functional of scatter
(Definition \ref{def:Tyler generalized}) or other functionals of
scatter to the random distribution
\[
\frac{2}{n_+ - K} \sum_{i=1}^K \frac{1}{n_i}
\sum_{1 \le j < \ell\le n_i} \delta_{S(X_{ij},X_{i\ell})}^{}
\]
on $\Rqqsympsd$.\vadjust{\goodbreak}

The resulting scatter estimator $\hat{\Sigma}$ could be used, for
instance, in the context of nearest-neighbor classification to define a
data-driven Mahalanobis distance $\hat{d}(x,y) := \bigl\| \hat
{\Sigma}^{-1/2}(x - y) \bigr\|$ between vectors $x,y \in\R^q$.


\section[$M$-functionals of scatter]{$\boldsymbol{M}$-functionals of scatter}
\label{sec:Scatter}

In this section we consider $M$-functionals of scatter only. That
means, when thinking about a distribution on $\R^q$, we assume that it
has a given center $\mu= 0$. In view of the considerations in the
preceding section, however, we consider distributions $Q$ on
$\Rqqsympsd$. Two particular examples for $Q$ are
\begin{align}
\label{eq:Example Q 1}
Q^1(P) \
&:= \ \LL(XX^\top)
\intertext{and}
\label{eq:Example Q 2}
Q^k(P) \
&:= \ \LL\bigl( S(X_1,X_2,\ldots,X_k) \bigr),
\quad k \ge2,
\end{align}
for independent, identically distributed random vectors $X, X_1$,
$X_2$, \ldots, $X_k$ with distribution $P$ on $\R^q$.

\subsection{Definitions and basic properties}

\begin{Definition}[A log-likelihood type criterion]
\label{def:MF_of_Scatter}
For a given ``loss function'' $\rho :[0,\infty)$ $\to\R$ we define
\[
L_\rho(\Sigma,Q)
\ := \ \int\bigl[ \rho(\tr(\Sigma^{-1}M)) - \rho(\tr(M)) \bigr]
\, Q(dM)
+ \log\det\Sigma
\]
for $\Sigma\in\Rqqsympd$, provided that the integral exists in $\R$.
\end{Definition}

\paragraph{Assumptions on $\rho$ and $Q$}
Throughout we assume that $\rho$ is continuously differentiable on
$(0,\infty)$ with derivative $\rho' > 0$. Moreover, we assume that
\[
\psi(s) \ := \ s \rho'(s).
\]
is non-decreasing in $s > 0$.

\paragraph{Case 0} For $s > 0$ let
\[
\rho(s) \ := \ q \log(s),
\]
so $\rho'(s) = q/s$ and $\psi(s) = q$. Here we assume that $Q(\{0\})
= 0$.

\paragraph{Case 1} We assume that $\psi$ is strictly increasing on
$(0,\infty)$ with limits $\psi(0) = 0$ and $\psi(\infty) \in
(q,\infty]$. Here we assume that
\begin{equation}
\label{eq:Finite moments}
\int\psi(\lambda\tr(M)) \, Q(dM) \ < \ \infty
\quad\text{for any} \ \lambda\ge1,
\end{equation}
which is obviously true in case of $\psi(\infty) < \infty$.\vadjust{\goodbreak}

\begin{Remark}
\label{Remark: Log-likelihood}
Note that Tyler's generalized $M$-functional of
scatter (Definition~\ref{def:Tyler generalized}) corresponds to Case~0 above. In Case~1,
if $Q= Q^1(P)$ as in \eqref{eq:Example Q 1}, then $L_\rho(\cdot, Q)$
corresponds to the log-likelihood function $L(0,\Sigma,P)$ for an
elliptical model with $\tilde{f}(s) := \exp(-\rho(s)/2)$. Note that
for $0 < s_o < s$,
\[
\rho(s) \ = \ \rho(s_o) + \int_{s_o}^s \psi(t) t^{-1} \, dt
\
\begin{cases}
\le\ \rho(s_o) + \psi(\infty) \log(s/s_o), \\
\ge\ \rho(s_o) + \psi(s_o) \log(s/s_o).
\end{cases}
\]
This implies that
\[
\int_{\R^q} \exp\bigl( - \rho(\|x\|^2)/2 \bigr) \, dx
\ = \ C_q \int_0^\infty\exp\bigl( - \rho(s)/2 + (q/2 - 1) \log(s)
\bigr) \, ds
\]
is finite if, and only if, $\psi(\infty) > q$.
\end{Remark}

\begin{Remark}
Several authors require in addition $\rho'$ to be non-increasing on
$(0,\infty)$. Then
\[
\psi(\lambda s)
\ = \ \lambda s \rho'(\lambda s)
\ \le\ \lambda\psi(s)
\]
for any $s > 0$ and $\lambda\ge1$, whence \eqref{eq:Finite moments}
is equivalent to
\[
\int\psi(\tr(M)) \, Q(dM) \ < \ \infty.
\]
\end{Remark}

\begin{Example}[Multivariate $t$-distributions]
For $\nu\ge0$ let
\[
\rho(s) = \rho_{\nu,q}(s) \ := \ (\nu+ q) \log(\nu+ s).
\]
In case of $\nu> 0$, $L_\rho(\Sigma,Q)$ in Definition \ref
{def:MF_of_Scatter} may be viewed as a generalization of $L_\nu
(0,\Sigma,P)$ in \eqref{eq:def L_nu}. Here $\rho'(s) = (\nu+
q)/(\nu+ s)$ is strictly decreasing and $\psi(s) = (\nu+ q) s/(\nu+
s)$ is strictly increasing in $s \ge0$. Moreover, $\psi(0) = 0$ and
$\psi(\infty) = \nu+ q$.
\end{Example}

\begin{Example}[Multivariate elliptical Weibull-distributions]
For a fixed $\gamma> 0$ and $s \ge0$ let $\rho(s) := s^\gamma$.
Then $\rho'(s) = \gamma s^{\gamma- 1}$ and $\psi(s) := \gamma
s^\gamma$. Here $L_\rho(\Sigma,Q)$ corresponds to elliptically
symmetric distributions with center $0$ that are generated by $\tilde
{f}(s) := \exp(- s^\gamma/2)$. In this situation \eqref{eq:Finite
moments} means that
\[
\int\tr(M)^\gamma\, Q(dM) \ < \ \infty,
\]
and in setting~\eqref{eq:Example Q 1} this is equivalent to
\[
\int\|x\|^{2\gamma} \, P(dx) \ < \ \infty.
\]
\end{Example}

\begin{Example}
Another example, suggested to us by David Tyler, is given by
\[
\rho(s) \ := \ (\nu+ q) \log(1 + s^2)/2
\]
for $s \ge0$ with some parameter $\nu> 0$. Here $\rho'(s) = (\nu+
q) s / (1 + s^2)$, and $\psi(s) = (\nu+ q) s^2/(1 + s^2)$ is strictly
increasing in $s \ge0$ with $\psi(0) = 0$ and $\psi(\infty) = \nu+ q$.
\end{Example}

\paragraph{Existence of $L_\rho$}
The functional $L_\rho(\cdot,P) : \Rqqsympd\to\R$ is well-defined
in Cases~0 and 1.
This will be derived from the following two elementary inequalities
which will be used several times:

\begin{Lemma}
\label{lem:Trace inequalities}
For $M \in\Rqqsympsd$ and $A \in\Rqqsym$,
\[
\lambda_{\rm min}(A) \tr(M) \ \le\ \tr(AM) \ \le\ \lambda_{\rm
max}(A) \tr(M).
\]
\end{Lemma}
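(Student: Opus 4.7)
The plan is to diagonalise $M$ via its spectral decomposition and then apply the Rayleigh quotient characterisation of eigenvalues to $A$. Concretely, since $M \in \Rqqsympsd$, write $M = \sum_{i=1}^{q} \mu_i v_i v_i^\top$ with eigenvalues $\mu_i \ge 0$ and an orthonormal basis of eigenvectors $v_1, \ldots, v_q$. Using linearity of the trace and the identity $\tr(A v_i v_i^\top) = v_i^\top A v_i$, one obtains
\[
	\tr(AM) \ = \ \sum_{i=1}^{q} \mu_i \, v_i^\top A v_i .
\]

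Next I would invoke the standard Rayleigh bound for the symmetric matrix $A$: for every unit vector $v \in \R^q$,
\[
	\lambda_{\rm min}(A) \ \le \ v^\top A v \ \le \ \lambda_{\rm max}(A) ,
\]
which itself follows from diagonalising $A$ and expanding $v$ in an eigenbasis. Since each coefficient $\mu_i$ is non-negative and $\sum_{i=1}^{q} \mu_i = \tr(M)$, multiplying the Rayleigh inequality by $\mu_i$ and summing over $i$ delivers both bounds of the lemma simultaneously.

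There is no real obstacle here: the argument is a short, textbook consequence of the spectral theorem applied separately to $A$ (for the Rayleigh bound) and to $M$ (to exploit non-negativity of its eigenvalues). The only point worth stating explicitly in the write-up is why $\mu_i \ge 0$ is needed, namely to preserve the direction of the inequality when multiplying through by $\mu_i$; if $M$ were merely symmetric rather than positive semidefinite the argument would break down.
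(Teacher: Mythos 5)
Your argument is correct and is essentially identical to the paper's proof: both diagonalise $M$, write $\tr(AM) = \sum_i \mu_i v_i^\top A v_i$, and bound each quadratic form $v_i^\top A v_i$ by $\lambda_{\rm min}(A)$ and $\lambda_{\rm max}(A)$ before summing against the non-negative weights $\mu_i$. Nothing is missing.
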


\begin{Lemma}
\label{lem:Expansion of rho}
For arbitrary $s, t > 0$,
\[
\psi(s) \log(t/s) \ \le\ \rho(t) - \rho(s) \ \le\ \psi(t) \log
(t/s).
\]
If $\rho'$ is non-increasing on $(0,\infty)$, then
\[
\rho'(t) (t - s) \ \le\ \rho(t) - \rho(s) \ \le\ \rho'(s) (t - s).
\]
\end{Lemma}

It follows from Lemma~\ref{lem:Trace inequalities} that for arbitrary
$M \in\Rqqsympsd$ and $\Sigma\in\Rqqsympd$,
\[
\lambda_{\rm max}(\Sigma)^{-1} \tr(M)
\ \le\ \tr(\Sigma^{-1} M)
\ \le\ \lambda_{\rm min}(\Sigma)^{-1} \tr(M).
\]
Combining these inequalities in case of $M \ne0$ with Lemma~\ref
{lem:Expansion of rho}, applied to $\{s,t\} = \bigl\{ \tr(M),\tr
(\Sigma^{-1} M) \bigr\}$, yields the inequality
\[
\bigl| \rho(\tr(\Sigma^{-1}M)) - \rho(\tr(M)) \bigr|
\ \le\ \psi\bigl( \lambda_*(\Sigma) \tr(M) \bigr) \log(\lambda
_*(\Sigma))
\]
with $\lambda_*(\Sigma) = \max\bigl\{ \lambda_{\rm min}(\Sigma
)^{-1}, \lambda_{\rm max}(\Sigma) \bigr\}$, and the right hand side
is integrable with respect to $Q$ by assumption \eqref{eq:Finite moments}.

\paragraph{Linear equivariance}
For a nonsingular matrix $B \in\Rqq$ let
\[
Q^B \ := \ \LL(BSB^\top)
\quad\text{and}\quad
Q_B \ := \ \LL(B^{-1}SB^{-\top})
\quad\text{with} \ \ S \sim Q,
\]
where $B^{-\top} := (B^{-1})^\top= (B^\top)^{-1}$. Then one can
easily verify that for arbitrary $\Sigma\in\Rqqsympd$,
\begin{align}
\nonumber
L_\rho(B\Sigma B^\top, Q^B) - L_\rho(BB^\top, Q^B) \
&= \ L_\rho(\Sigma, Q), \\
\label{eq:equivariance L_rho}
L_\rho(B\Sigma B^\top, Q) - L_\rho(BB^\top, Q) \
&= \ L_\rho(\Sigma, Q_B).
\end{align}
Let $\QQ_\rho$ denote the set of all distributions $Q$ as described
in Cases~0 and 1 such that $L_\rho(\cdot,Q)$ has a unique minimizer in
\[
\begin{cases}
\bigl\{ \Sigma\in\Rqqsympd: \det(\Sigma) = 1 \bigr\} & \text{in
Case~0}, \\
\Rqqsympd& \text{in Case~1}.
\end{cases}
\]
This minimizer is denoted by $\bSigma_\rho(Q)$. Then $\QQ_\rho$ is
linear invariant and $\bSigma_\rho$ is linear equivariant in the
sense that $Q^B \in\QQ_\rho$ and
\[
\bSigma(Q^B) \ = \
\begin{cases}
\det(BB^\top)^{-1/q} B \bSigma_\rho(Q) B^\top& \text{in Case~0} \\
B \bSigma_\rho(Q) B^\top& \text{in Case~1}
\end{cases}
\]
for all $Q \in\QQ_\rho$ and $B \in\Rqqns$.

\subsection{Existence and uniqueness of an optimizer}

The question of existence and uniqueness of minimizers of $L_\rho
(\cdot,Q)$ is closely related to the mass which $Q$ puts on special
linear subspaces of $\Rqqsym$. We define
\[
\VV_q \ := \ \{\V: \V\ \text{is a linear subspace of} \ \R^q\}.
\]
Then for $\V\in\VV_q$, we consider
\[
\M(\V) \ := \ \bigl\{ M \in\Rqqsympsd: M \R^q \subset\V\bigr\},
\]
a linear subspace of $\Rqqsym$ with dimension $\dim(\M(\V)) = \dim
(\V)(\dim(\V)+1)/2$. Another object of interest is the matrix
\begin{align*}
\Psi_\rho(\Sigma,Q) \
&:= \ \int\rho'(\tr(\Sigma^{-1} M)) M \, Q(dM) \\
&= \ \int\psi(\tr(\Sigma^{-1} M)) \tr(\Sigma^{-1} M)^{-1} M \,
Q(dM),
\end{align*}
where the integrands are interpreted as $0 \in\Rqq$ if $M = 0$.
It will turn out that the following conditions play the key role for the
existence of a unique minimizer~$\Sigma_\rho(Q)$.

\paragraph{Condition~0}
We assume that
\begin{equation}
\label{eq:Uniqueness Case 0}
Q(\M(\V)) \ < \ \frac{\dim(\V)}{q}
\quad\text{for all} \ \V\in\VV_q \ \text{with} \ 1 \le\dim(\V)
< q.
\end{equation}

\paragraph{Condition~1}
We assume that
\begin{equation}
\label{eq:Uniqueness Case 1}
Q(\M(\V)) \ < \ \frac{\psi(\infty) - q + \dim(\V)}{\psi(\infty)}
\quad\text{for all} \ \V\in\VV_q \ \text{with} \ 0 \le\dim(\V)
< q.
\end{equation}
In case of $\psi(\infty) = \infty$ the fraction on the right hand
side of \eqref{eq:Uniqueness Case 1} is interpreted as $1$.

\begin{Theorem}
\label{thm:Uniqueness}
A matrix $\Sigma\in\Rqqsympd$ minimizes $L_\rho(\cdot,Q)$ if, and
only if,
\begin{equation}
\label{eq:Fixed point}
\Psi_\rho(\Sigma,Q) \ = \ \Sigma.
\end{equation}

In Case~0, $L_\rho(\cdot,Q)$ possesses a unique minimizer with
determinant $1$ if, and only if, Condition~0 is satisfied.

In Case~1, $L_\rho(\cdot,P)$ possesses a unique minimizer if, and
only if, Condition~1 is satisfied.
\end{Theorem}

Our proof of Theorem~\ref{thm:Uniqueness} is based on an in-depth
analysis of the mapping $L_\rho(\cdot, Q)$ in Section~\ref
{sec:AnalysisL}. In particular it will turn out that the fixed-point
equation \eqref{eq:Fixed point} is equivalent to $L_\rho(\cdot,Q)$
having gradient $0$ at $\Sigma$. With Theorem~\ref{thm:Uniqueness} at
hand we may redefine the family $\QQ_\rho$ as follows:

\smallskip
\noindent
In Case~0, $\QQ_\rho$ consists of all probability distributions $Q$
on $\Rqqsympsd$ satisfying Condition~0 and $Q(\{0\}) = 0$.

\smallskip
\noindent
In Case~1, $\QQ_\rho$ consists of all probability distributions $Q$
on $\Rqqsympsd$ satisfying Condition~1 and $\int\psi(\lambda\tr
(M)) \, Q(dM) < \infty$ for any $\lambda\ge1$.\vadjust{\goodbreak}

Let us comment now on these conditions in two special settings.

\paragraph{The setting \eqref{eq:Example Q 1}}
If $Q = Q^1(P) = \LL(XX^\top)$ with a random vector $X \sim P$, then
$Q(\{0\}) = P(\{0\})$, and $\int\psi(\lambda\tr(M)) \, Q(dM) = \int
\psi(\lambda\|x\|^2) \, P(dx)$. Moreover,
\[
Q(\M(\V)) \ = \ P(\V).
\]
Hence Conditions~0 and 1 coincide with the known conditions from the
literature on $M$-estimation of scatter. In particular, a unique
minimizer $\bSigma_\rho(Q)$ is well-defined if $P$ is smooth in the
sense that
\begin{equation}
\label{eq:PV=0}
P(\V) \ = \ 0
\quad\text{for any} \ \V\in\VV_q \ \text{with} \ \dim(\V) < q
\end{equation}
and satisfies $\int\psi(\lambda\|x\|^2) \, P(dx) < \infty$ for any
$\lambda\ge1$.

Now consider the empirical distribution
\[
\hat{Q}^1 \ := \ n^{-1} \sum_{i=1}^n \delta_{X_i^{}X_i^\top}^{}
\]
with $n \ge q$ independent random vectors $X_1, X_2, \ldots, X_n \sim
P$. This is an unbiased estimator of $Q^1(P)$. In Section~\ref{sec:Proofs}
we will apply Theorem~\ref{thm:Uniqueness} to $\hat{Q}^1$ and prove
the following result:

\begin{Lemma}
\label{lem:Example Q 1}
Suppose that $P$ is smooth in the sense of \eqref{eq:PV=0}. Then
$\bSigma(\hat{Q}^1)$ is well-defined with probability one, provided that
\[
n \ \ge\
\begin{cases}
q+1 & \text{in Case 0}, \\
q & \text{in Case 1}.
\end{cases}
\]
\end{Lemma}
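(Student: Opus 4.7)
The plan is to reduce the claim to the criteria of Theorem~\ref{thm:Uniqueness} via the identity $\hat{Q}^1(\M(\V)) = \hat{P}(\V) = n^{-1}\#\{i\colon X_i\in \V\}$, and then exploit smoothness of $P$ to show that, with probability one, no proper linear subspace of $\R^q$ contains too many of the $X_i$. Before turning to the main estimates I would verify the side hypotheses: $\hat{Q}^1$ is finitely supported, so the moment condition \eqref{eq:Finite moments} demanded in Case~1 is automatic; and $P(\{0\})=0$ by smoothness, so almost surely $\hat{Q}^1(\{0\})=0$, which is the extra requirement of Case~0.

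The probabilistic heart of the argument is the claim that, with probability one, every subcollection $X_{i_1},\ldots,X_{i_k}$ of size $k\le q$ is linearly independent. By a union bound over the finitely many subsets of $\{1,\dots,n\}$ of size $\le q$, it suffices to prove this for one fixed tuple. Argue inductively in $k$: on the event that $X_{i_1},\ldots,X_{i_{k-1}}$ are linearly independent, their span $\V^\ast$ is a random linear subspace of $\R^q$ of dimension $k-1<q$, so by independence and smoothness \eqref{eq:PV=0},
\[
    \Pr\bigl(X_{i_k}\in\V^\ast \,\big|\, X_{i_1},\ldots,X_{i_{k-1}}\bigr)
    \ = \ P(\V^\ast) \ = \ 0 .
\]
On the resulting almost-sure event, every linear subspace $\V\subset\R^q$ with $d:=\dim(\V)<q$ contains at most $d$ of the $X_i$ (any $d+1$ of them would be linearly dependent, contradicting the independence just established since $d+1\le q$), whence
\[
    \hat{Q}^1(\M(\V)) \ = \ \hat{P}(\V) \ \le \ \frac{d}{n} .
\]

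It then remains to compare this bound with the thresholds in Theorem~\ref{thm:Uniqueness}. In Case~0, the required strict inequality $d/n<d/q$ for $1\le d<q$ is equivalent to $n>q$, i.e.\ $n\ge q+1$. In Case~1, the required inequality $d/n<(\psi(\infty)-q+d)/\psi(\infty)$ for $0\le d<q$ rearranges to
\[
    \psi(\infty) \ > \ \frac{n(q-d)}{n-d} ,
\]
and a short differentiation shows that, as a function of $d\in[0,q)$, the right-hand side is non-increasing as soon as $n\ge q$, attaining its maximum $q$ at $d=0$; hence the standing assumption $\psi(\infty)>q$ of Case~1 yields the strict inequality for every admissible $d$, and $n\ge q$ suffices.

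The only substantive obstacle is a bit of bookkeeping in Case~1: one must keep the inequalities strict and check that the borderline case $n=q$ really does go through — at $n=q$ the bound $n(q-d)/(n-d)$ collapses to the constant $q$, so it is precisely the strictness of $\psi(\infty)>q$ (rather than $\ge$) that closes the proof and prevents one from needing $n\ge q+1$.
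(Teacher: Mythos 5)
Your proof is correct and follows essentially the same route as the paper's: establish almost-sure linear independence of all subcollections of size at most $q$, deduce $\hat{Q}^1(\M(\V)) = \hat{P}(\V) \le \dim(\V)/n$, and compare with the thresholds of Theorem~\ref{thm:Uniqueness}. You are somewhat more explicit than the paper on the union bound over sub-tuples, the side hypotheses of Cases~0 and 1, and the Case~1 algebra (which the paper leaves as ``can be shown to be equivalent''), but these are elaborations of the same argument.
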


This result is based on the fact that in case of \eqref{eq:PV=0}, $q$
independent random vectors with distribution $P$ are linearly
independent almost surely.

\paragraph{The setting \eqref{eq:Example Q 2}}
Let $Q = Q^k(P) = \LL\bigl( S(X_1,X_2,\ldots,X_k)\bigr)$ with $k
\ge2$ independent random vectors $X_1, X_2, \ldots, X_k \sim P$. Here
$Q(\{0\}) = 0$ if, and only if, $P$ has no atoms, i.e.
\[
P(\{x\}) \ = \ 0
\quad\text{for all} \ x \in\R^q.
\]
Note also that $\tr(S(X_1,X_2,\ldots,X_k)) \le(k-1)^{-1} \sum
_{i=1}^k \|X_i\|^2$, so
\begin{align*}
\psi\bigl( \lambda\tr(S(X_1,X_2,\ldots,X_k)) \bigr) \
&\le\ \psi\Bigl( \lambda(1-1/k)^{-1} \max_{1\le i\le k} \|X_i\|^2
\Bigr) \\
&\le\ \sum_{i=1}^k \psi\bigl( \lambda(1 - 1/k)^{-1} \|X_i\|^2
\bigr)
\end{align*}
and
\begin{equation}
\label{eq:From.P.to.Q}
\int\psi(\lambda\tr(M)) \, Q(dM)
\ \le\ k \int\psi\bigl( \lambda(1 - 1/k)^{-1} \|x\|^2 \bigr) \,
P(dx).
\end{equation}
Moreover, according to Lemma~\ref{lem:Sample covariances} in
Section~\ref{sec:Proofs},
\[
S(X_1, X_2, \ldots, X_k) \, \R^q
\ = \ \mathrm{span}(X_2 - X_1, \ldots, X_k - X_1).
\]
Hence
\begin{align*}
Q(\M(\V)) \
&= \ \Pr\bigl( \mathrm{span}(X_2-X_1, \ldots, X_k-X_1) \subset\V
\bigr) \\
&= \ \Pr(X_2-X_1, \ldots, X_k-X_1 \in\V) \\
&= \ \int P(x + \V)^{k-1} \, P(dx) \\
&= \ \sum_{w \in\V^\perp} P(w + \V)^k.
\end{align*}
In particular, $\bSigma_\rho(Q)$ is well-defined if $P$ is smooth in
the sense that
\begin{equation}
\label{eq:PH=0}
P(H) \ = \ 0
\quad\text{for any hyperplane} \ H \subset\R^q,
\end{equation}
and if $\int\psi(\lambda\|x\|^2) \, P(dx) < \infty$ for arbitrary
$\lambda\ge1$. (A hyperplane is a set of the form $w + \V$ with $w
\in\R^q$, $\V\in\VV_q$, $\dim(\V) = q-1$.)

Now consider the empirical distribution
\[
\hat{Q}^k \ := \ {\binom{n}{k}}^{-1} \sum_{1 \le i_1 < \cdots< i_k
\le n}
\delta_{S(X_{i_1}, \ldots, X_{i_k})}^{}
\]
for some $k \ge2$ and $n \ge k$ independent random vectors $X_1, X_2,
\ldots, X_n \sim P$. Note that $\hat{Q}^k$ is an unbiased estimator
of $Q^k(P)$. In Section~\ref{sec:Proofs} we'll prove the following result:

\begin{Lemma}
\label{lem:Example Q 2}
Suppose that $P$ is smooth in the sense of \eqref{eq:PH=0}. Then
$\bSigma(\hat{Q}^k)$ is well-defined almost surely, provided that $n
\ge q+1$.
\end{Lemma}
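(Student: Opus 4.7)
The plan is to invoke Theorem~\ref{thm:Uniqueness}, reducing the problem to verifying, almost surely and for every $\V \in \VV_q$ with $\dim(\V) = d < q$, the inequality $\hat{Q}^k(\M(\V)) < d/q$ in Case~0 and $\hat{Q}^k(\M(\V)) < (\psi(\infty) - q + d)/\psi(\infty)$ in Case~1. By Lemma~\ref{lem:Sample covariances}, a $k$-subset $I \subset \{1,\dots,n\}$ contributes to $\hat{Q}^k(\M(\V))$ precisely when all the points $X_i$, $i \in I$, lie in a common translate of $\V$, so with $m_w(\V) := \#\{i : X_i \in w + \V\}$ one has
\[
\hat{Q}^k(\M(\V)) \ = \ \binom{n}{k}^{-1} \sum_{w \in \V^\perp} \binom{m_w(\V)}{k}
\quad\text{and}\quad
\sum_{w \in \V^\perp} m_w(\V) \ = \ n.
\]

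First I would derive \emph{affine} general position from \eqref{eq:PH=0}. For fixed $d < q$ and any $(d+2)$-subset $J$ of $\{1,\dots,n\}$, conditioning on any $d+1$ of the associated points pins down an affine subspace of dimension at most $d \le q-1$, hence contained in some hyperplane; by \eqref{eq:PH=0} the remaining point avoids it almost surely. A union bound over the finitely many choices of $d$ and $J$ shows that, almost surely, every affine subspace of dimension $d < q$ contains at most $d+1$ of the $X_i$, so $m_w(\V) \le d+1$ for all $w$ and all $\V$ with $\dim(\V) = d$.

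Next I would apply the elementary falling-factorial bound
\[
m(m-1)\cdots(m-k+1) \ \le \ m \cdot d(d-1)\cdots(d-k+2)
\quad\text{for} \ 0 \le m \le d+1,
\]
which is tight exactly for $m \in \{0, d+1\}$, summed against $\sum_w m_w(\V) = n$, to obtain
\[
\hat{Q}^k(\M(\V)) \ \le \ \frac{d(d-1) \cdots (d-k+2)}{(n-1)(n-2) \cdots (n-k+1)} \ \le \ \frac{d}{q},
\]
where the second step uses $n-j \ge q-j+1$ for $1 \le j \le k-1$, i.e.\ $n \ge q+1$. The latter inequality is strict whenever $k \ge 3$ (because a factor $(d-1)/(q-1) < 1$ then appears) or whenever $n \ge q+2$, while the former is strict as soon as some $m_w(\V) \notin \{0,d+1\}$, in particular whenever $(d+1) \nmid n$. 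The Case~1 threshold $(\psi(\infty)-q+d)/\psi(\infty)$ strictly exceeds $d/q$ (since $\psi(\infty) > q$), so Case~1 follows from the Case~0 bound with room to spare; for $d = 0$ in Case~1 one has $m_w \le 1$ and hence $\hat{Q}^k(\M(\V)) = 0$ outright.

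The residual obstacle is the borderline scenario $k = 2$, $n = q+1$, $(d+1) \mid (q+1)$, where equality in the combinatorial bound would force the $n$ points to split into $s := (q+1)/(d+1) \ge 2$ groups of exactly $d+1$ points, each lying in one of $s$ mutually parallel $d$-dimensional affine subspaces of $\R^q$. I would rule this out by a separate null-event argument: for any of the finitely many partitions of $\{1,\dots,n\}$ into $s$ blocks of size $d+1$, the parallelism condition on the $s$ affine spans imposes a codimension-$d(q-d) \ge 1$ constraint, which by conditioning on all points but one and invoking \eqref{eq:PH=0} forces that last point into a specific $d$-dimensional affine subspace, a $P$-null set. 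A union bound over the finitely many admissible $d$ and partitions preserves the null event, and Theorem~\ref{thm:Uniqueness} then delivers the claimed almost-sure well-definedness of $\bSigma(\hat{Q}^k)$.
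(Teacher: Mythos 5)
Your proof is correct, but its combinatorial core takes a genuinely different route from the paper's. The paper's key probabilistic input is stronger than yours: using Corollary~\ref{cor:Sample covariances} and a conditioning argument it shows that, almost surely, for any partition of the indices into groups lying in cosets of $\V$ the dimensions of the difference-spans \emph{add up}, so that $\sum_\ell(\#M_\ell-1)\le d$; maximizing $\sum_\ell\binom{\#M_\ell}{k}$ under this constraint yields the sharp bound $\hat{Q}^k(\M(\V))\le\binom{n}{k}^{-1}\binom{d+1}{k}$, which is then verified to be strictly less than $d/q$ for all $n\ge q+1$ by a short monotonicity argument, with no degenerate cases to treat. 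You instead use only the per-coset cardinality bound $m_w\le d+1$ (affine general position) together with $\sum_w m_w=n$, which gives the weaker bound $\tfrac{n}{d+1}\binom{n}{k}^{-1}\binom{d+1}{k}=\tfrac{d(d-1)\cdots(d-k+2)}{(n-1)\cdots(n-k+1)}$; this still suffices, but it collapses to equality with $d/q$ precisely when $k=2$, $n=q+1$ and $(d+1)\mid(q+1)$, which is why you need the separate null-event argument ruling out the configuration of $(q+1)/(d+1)$ groups of $d+1$ points in parallel $d$-flats. That extra argument is sound (conditioning on all but one point forces it into a fixed $d$-dimensional affine subspace, a $P$-null set by \eqref{eq:PH=0}, and there are finitely many partitions). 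In short: the paper's approach buys a uniformly strict inequality and a case-free conclusion at the price of proving dimension additivity; yours avoids that lemma and the maximization of $\sum_\ell\binom{m_\ell+1}{k}$, at the price of one borderline case. Your reduction of Case~1 to the Case~0 threshold via $(\psi(\infty)-q+d)/\psi(\infty)>d/q$ is exactly the paper's observation.
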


\paragraph{Estimation of proportional covariance matrices}
As in Section~\ref{subsec:Tyler 1987 and more} consider
\[
\hat{Q} \ = \ \sum_{i=1}^K \frac{m_i}{m_+} \, \delta_{S_i}
\]
with independent random matrices $S_i \sim\mathcal{W}_q(c_i \Sigma,
m_i)$. Let $S_i = c_i \sum_{j=1}^{m_i} Y_{ij}^{} Y_{ij}^\top$ with
independent random vectors $Y_{ij} \sim\NN_q(0,\Sigma)$, $1 \le i
\le K$, $1 \le j \le m_i$. Then one can easily show that
\[
S_i \, \R^q \ = \ \mathrm{span}(Y_{ij} : 1 \le j \le m_i).
\]
Thus with similar arguments as in the proof of Lemma~\ref{lem:Example
Q 1} one can show that with probability one,
\[
\hat{Q}(\M(\V))
\ \le\ \frac{1}{m_+} \sum_{i=1}^K \sum_{j=1}^{m_i} 1_{[Y_{ij} \in
\V]}
\ \le\ \frac{\dim(\V)}{m_+}
\]
for arbitrary $\V\in\VV_q$ with $\dim(\V) < q$. Hence $\bSigma
_\rho(\hat{Q})$ is well-defined in Case~0 almost surely, provided that
\[
m_+ \ \ge\ q + 1.
\]

\subsection{A fixed-point algorithm}

Suppose that $\rho$ satisfies the additional constraint that $\rho'$
is non-increasing on $(0,\infty)$. In this case one can use the
fixed-point equation \eqref{eq:Fixed point} to calculate $\bSigma
_\rho(Q)$ numerically. Recall that $\Sigma_* \in\Rqqsympd$
minimizes $L_\rho(\cdot,Q)$ if, and only if, $\Psi_\rho(\Sigma
_*,Q) = \Sigma_*$, according to Theorem~\ref{thm:Uniqueness}. This
fixed-point equation implies that
\[
\Psi_\rho(\Sigma,Q) \ \in\ \Rqqsympd
\quad\text{for arbitrary} \ \Sigma\in\Rqqsympd.
\]
For otherwise we could find a vector $v \in\R^q \setminus\{0\}$ such that
\[
0 \ = \ v^\top\Psi_\rho(\Sigma,Q) v
\ = \ \int\rho'(\tr(\Sigma^{-1}M)) v^\top M v \, Q(dM).
\]
But then $v^\top M v = 0$ for almost all $M$ w.r.t.\ $Q$, i.e.\
$Q(\mathbb{M}(v^\top)) = 1$. This would yield the contradiction $0 <
v^\top\Sigma_* v = v^\top\Psi_\rho(\Sigma_*,Q) v = 0$. It would
also contradict Condition~0 and 1.

Iterating the mapping $\Psi_\rho(\cdot,Q)$ yields a sequence
converging to a positive multiple of $\bSigma_\rho(Q)$ in Case~0 and
to $\bSigma_\rho(Q)$ in Case~1:

\begin{Lemma}[Convergence of a fixed-point algorithm]
\label{lem:Fixed-point algorithm}
Suppose that $Q$ fulfills Condition~0 in Case~0 and Condition~1 in
Case~1, and let $\rho'$ be non-increasing on $(0,\infty)$. For any
starting point $\Sigma_0 \in\Rqqsympd$, define inductively
\[
\Sigma_k \ := \ \Psi_\rho(\Sigma_{k-1},Q)
\]
for $k = 1, 2, 3, \dots$. Then the sequence $(\Sigma_k)_{k \ge0}$
converges to a solution of the fixed-point equation \eqref{eq:Fixed point}.
\end{Lemma}

A key ingredient for proving this lemma is the following inequality. It
may be viewed as a special case of a wellkown inequality for the EM
algorithm by Dempster et al.\ (\citeyear{Dempster_etal_1977}). For the
precise connection between variations of the present fixed-point
algorithm and the EM algorithm we refer to Arslan et al.\ (\citeyear
{Arslan_etal_1995}) and Arslan and Kent (\citeyear{Arslan_Kent_1998}).

\begin{Lemma}
\label{lem:PsiQ.better.than.I}
Suppose that $\rho'$ is non-increasing on $(0,\infty)$. Let $Q$ be a
probability distribution on $\Rqqsympsd$ such that $Q(\mathbb
{M}(v^\top)) < 1$ for any $v \in\R^q \setminus\{0\}$ and $\int\psi
(\tr(M)) \, Q(dM) < \infty$. Then for any $\Sigma\in\Rqqsympd$,
\[
L_\rho(\Psi_\rho(\Sigma,Q),Q) \ < \ L_\rho(\Sigma,Q)
\]
unless $\Psi_\rho(\Sigma,Q) = \Sigma$.
\end{Lemma}


\section{Analytical properties of the criterion function}
\label{sec:AnalysisL}

The results in the previous section can be derived from an in-depth
analysis of the function $L_\rho(\cdot,Q)$. As mentioned in the
introduction, we utilize matrix exponentials which are reviewed in the
next subsection. Then we derive differentiability, a convexity property
and coercivity of $L_\rho(\cdot,Q)$ under certain conditions. In the
last subsection we derive second order Taylor expansions of $L_\rho
(\cdot,Q)$ which are needed later on.

\subsection{The exponential transform of matrices}

\paragraph{The exponential transform on $\Rqq$}
For an arbitrary matrix $A \in\Rqq$, its exponential transform
\[
\exp(A) \ := \ \sum_{k=0}^\infty\frac{A^k}{k!}
\]
is well-defined in $\Rqq$, satisfying the inequalities $\|\exp(A)\|
\le e^{\|A\|}$ and
\[
\Bigl\| \sum_{k=\ell}^\infty\frac{A^k}{k!} \Bigr\|
\ \le\ e_{}^{\|A\|} \|A\|^\ell/ \ell!
\quad\text{for} \ \ell\ge1.
\]
If $A, B \in\Rqq$ are interchangeable in the sense that $AB = BA$,
the familiar equation $\exp(A+B) = \exp(A)\exp(B) = \exp(B)\exp
(A)$ is valid. In particular, $\exp(A)$ is always nonsingular with inverse
\[
\exp(A)^{-1} \ = \ \exp(-A).
\]

In general the expansion of $\exp(A + B)$ is somewhat more
complicated. From the following result only the very first inequality
is needed later, but the full result may be of interest for curious
readers and illustrates why treating $L_\rho(\Sigma,Q)$ as a function
of $\log(\Sigma)$ is not that straightforward.

\begin{Lemma}[Taylor expansions of $\exp(\cdot)$]
\label{lem:Taylor of exp}
For matrices $A, \Delta\in\Rqq$,
\begin{align*}
\exp(A + \Delta) \
&= \ \exp(A) + R_0(A,\Delta) \\
&= \ \exp(A) + \int_0^1 \exp((1-u)A) \Delta\exp(uA) \, du +
R_1(A,\Delta)
\end{align*}
with
\[
\|R_0(A,\Delta)\| \ \le\ e^{\|A\| + \|\Delta\|} \|\Delta\|
\quad\text{and}\quad
\|R_1(A,\Delta)\| \ \le\ e^{\|A\| + \|\Delta\|} \|\Delta\|^2/2.
\]
Moreover,
\[
\exp(A + \Delta) \ = \ \exp(A) +
\sum_{k=1}^\infty\frac{1}{k!} \, \Ex\bigl[
\exp(U_{k0}A) \Delta\exp(U_{k1}A) \cdots\Delta\exp(U_{kk}A) \bigr
],
\]
where $U_{k0} = 1 - \sum_{j=1}^k U_{kj}$, and $(U_{kj})_{j=1}^k$ is
uniformly distributed on the convex polytope $\bigl\{ u \in[0,1]^k :
\sum_{j=1}^k u_j \le1 \bigr\}$.
\end{Lemma}

\paragraph{The exponential transform on $\Rqqsym$}
Any matrix $A \in\Rqqsym$ may be written as
\[
A \ = \ \sum_{i=1}^q \lambda_i(A) \, u_i^{} u_i^\top
\ = \ U \diag\bigl( (\lambda_i(A))_{i=1}^q \bigr) U^\top
\]
with the ordered eigenvalues $\lambda_i(A)$ of $A$, an orthonormal
basis $u_1, u_2, \ldots, u_q$ of corresponding eigenvectors, and the
orthogonal matrix $U = [u_1\, u_2\, \ldots\, u_q]$. Then one can
easily verify that
\[
\exp(A) \ = \ \sum_{i=1}^q \exp(\lambda_i(A)) \, u_i^{} u_i^\top
\ \in\ \Rqqsympd.
\]
As a mapping from $\Rqqsym$ to $\Rqqsympd$, the exponential function
is bijective with inverse
\[
\log(A) \ := \ \sum_{i=1}^q \log(\lambda_i(A)) \, u_i^{}u_i^\top.
\]
Moreover,
\[
\det(\exp(A)) \ = \ \exp(\tr(A)).
\]

\paragraph{Local parametrizations of $\Rqqsympd$}
Unfortunately, for $\Sigma, \Sigma' \in\Rqqsympd$, the equation
$\Sigma' = \exp(\log(\Sigma) + A)$ with $A := \log(\Sigma') -
\log(\Sigma)$ is not very helpful, because the Taylor expansion of
$\exp(\log(\Sigma) + A)$ is somewhat awkward, unless $\log(\Sigma
)$ and $A$ are interchangeable. In view of our considerations on linear
equivariance, we consider a different approach: Let $\Sigma\in
\Rqqsympd$, and fix an arbitrary $B \in\Rqqns$ such that
\[
\Sigma\ = \ BB^\top,
\]
e.g.\ $B = \Sigma^{1/2}$. Then
\[
\Rqqsympd\ = \ \bigl\{ B \exp(A) B^\top: A \in\Rqqsym\bigr\}.
\]
Indeed, any matrix $\Sigma' \in\Rqqsympd$ may be written as $B \exp
(A) B^\top$ with $A := \log(B^{-1} \Sigma' B^{-\top})$. Note that
the matrix $A$ depends on both $B$ and $\Sigma'$, but its eigenvalues
are simply $\lambda_i(A) = \log\lambda_i(\Sigma^{-1} \Sigma')$.
Moreover, if $\det(\Sigma) = 1$, then
\[
\bigl\{ \Sigma' \in\Rqqsympd: \det(\Sigma') = 1 \bigr\}
\ = \ \bigl\{ B \exp(A) B^\top: A \in\Rqqsym, \tr(A) = 0 \bigr\}.
\]

\subsection{First-order smoothness of the criterion function}

We start with an expansion of $L_\rho(\cdot,Q)$ in small
neighborhoods of $I_q$. To this end we need the matrix
\[
\Psi_\rho(Q)
\ := \ \Psi_\rho(I_q,Q) = \int\rho'(\tr(M)) M \, Q(dM)
\ \in\ \Rqqsym.
\]

\begin{Proposition}[1st order Taylor expansion]
\label{prop:Expansion 1}
For $A \in\Rqqsym$,
\[
L_\rho(\exp(A),Q)
\ = \ \langle A, G_\rho(Q)\rangle+ R_\rho(A,Q)
\]
with the gradient
\[
G_\rho(Q) \ := \ I_q - \Psi_\rho(Q)
\ \in\ \Rqqsym
\]
and a remainder $R_\rho(A,Q)$ satisfying the following inequalities:
\begin{align*}
\bigl| \langle A, G_\rho(Q)\rangle\bigr| \
&\le\ (q + J_\rho(Q)) \|A\|, \\
|R_\rho(A,Q)| \
&\le\ \bigl( J_\rho(e^{\|A\|},Q) - J_\rho(e^{-\|A\|},Q) \bigr) \|
A\|
+ J_\rho(Q) \|A\|^2/2,
\end{align*}
where $J_\rho(Q) := J_\rho(1,Q)$ and
\[
J_\rho(\lambda,Q) \ := \ \int\psi(\lambda\tr(M)) \, Q(dM).
\]
\end{Proposition}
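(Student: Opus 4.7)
The plan is to substitute $\Sigma = \exp(A)$ directly into Definition~\ref{def:MF_of_Scatter}. Using $\exp(A)^{-1} = \exp(-A)$ and $\log\det\exp(A) = \tr(A) = \langle A, I_q\rangle$, this gives
\[
L_\rho(\exp(A),Q) = \int\!\bigl[\rho(\tr(\exp(-A)M)) - \rho(\tr(M))\bigr]\,Q(dM) + \langle A, I_q\rangle.
\]
The linear-in-$A$ term is identified by the first-order approximation $\rho'(\tr(M))\cdot\tr(-AM)$ of the integrand in $A$ at $A = 0$; its integral equals $-\langle A, \Psi_\rho(Q)\rangle$, which together with $\langle A, I_q\rangle$ yields the gradient $G_\rho(Q) = I_q - \Psi_\rho(Q)$. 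The bound $|\langle A, G_\rho(Q)\rangle| \le (q + J(Q))\|A\|$ then follows from two applications of Lemma~\ref{lem:Trace inequalities}: $|\tr(A)| \le q\|A\|$ since $\|A\| = \max_i|\lambda_i(A)|$ for $A \in \Rqqsym$, and $|\tr(A\Psi_\rho(Q))| \le \|A\|\tr(\Psi_\rho(Q)) = \|A\| J(Q)$ because $\Psi_\rho(Q) \in \Rqqsympsd$ with $\tr(\Psi_\rho(Q)) = J(Q)$.

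For the remainder, I would fix $M$ with $a := \tr(M) > 0$ and set $\lambda := \tr(\exp(-A)M)/a$, so that the pointwise integrand of $R_\rho(A,Q)$ equals $\rho(\lambda a) - \rho(a) + \rho'(a)\tr(AM)$. The key algebraic manipulation is
\[
\rho(\lambda a) - \rho(a) + \rho'(a)\tr(AM) = \bigl[\rho(\lambda a) - \rho(a) - \psi(a)(\lambda - 1)\bigr] + \rho'(a)\tr(EM),
\]
where $E := \exp(-A) - I_q + A = \sum_{k\ge 2}(-A)^k/k!$; this uses $(\lambda - 1)a + \tr(AM) = \tr(EM)$ together with $\psi(a) = a\rho'(a)$. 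Lemma~\ref{lem:Expansion 1 of rho} sandwiches the bracketed term into $[-\psi(a)(\lambda - 1 - \log\lambda), 0]$, while Lemma~\ref{lem:Trace inequalities} applied to the PSD matrix $M$ gives $|\rho'(a)\tr(EM)| \le \psi(a)\|E\|$. Since every unit $v$ satisfies $v^\top\exp(-A)v \in [e^{-\|A\|}, e^{\|A\|}]$, the same interval contains $\lambda$, so $|\log\lambda| \le \|A\|$ and the Taylor expansion $e^x - 1 - x = \sum_{k\ge 2} x^k/k!$ yields $0 \le \lambda - 1 - \log\lambda \le \|A\|^2 e^{\|A\|}/2$. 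The exponential tail estimate already quoted in the text gives $\|E\| \le \|A\|^2 e^{\|A\|}/2$ as well. Summing, the integrand is bounded by $\psi(a)\|A\|^2 e^{\|A\|}$, and integration against $Q$ produces $|R_\rho(A,Q)| \le J(Q)\|A\|^2 \exp(\|A\|)$.

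The main obstacle is precisely this remainder bound. Lemma~\ref{lem:Expansion 1 of rho} supplies only one-sided information, namely $\psi(a)\log\lambda \le \rho(\lambda a) - \rho(a) \le \psi(a)(\lambda - 1)$, and therefore cannot by itself control the symmetric quantity $\rho(\lambda a) - \rho(a) + \rho'(a)\tr(AM)$. The algebraic split above is what makes the proof work: it extracts one signed piece, controlled by Lemma~\ref{lem:Expansion 1 of rho}, and one explicit matrix piece $\rho'(a)\tr(EM)$, controlled by the matrix exponential tail. Once this split is in hand, everything reduces to the scalar inequality $e^x - 1 - x \le x^2 e^{|x|}/2$ applied at $x = \log\lambda$ and $x = \pm\|A\|$.
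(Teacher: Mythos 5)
Your proof is correct and follows essentially the same route as the paper's: both substitute $\Sigma=\exp(A)$, set $a=\tr(M)$, $\lambda=\tr(\exp(-A)M)/a$, control $\rho(\lambda a)-\rho(a)-\psi(a)(\lambda-1)$ via Lemma~\ref{lem:Expansion 1 of rho} and the scalar bound $\lambda-1-\log\lambda\le\|A\|^2e^{\|A\|}/2$, and control the residual linearization error through $\tr((\exp(-A)-I_q+A)M)$ and the matrix-exponential tail estimate. The two halves of your bound (each $\psi(a)\|A\|^2e^{\|A\|}/2$) sum to exactly the paper's remainder bound, and your gradient estimate matches as well.
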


Note that $J_\rho(\cdot,Q) \equiv q$ in Case~0. In Case~1, $J_\rho
(\lambda,Q)$ is continuous and monotone increasing in $\lambda> 0$
with values in $[0, \psi(\infty))$. Thus in both cases,
\[
|R_\rho(A,Q)| \ = \ o(\|A\|) \quad\text{as} \ A \to0.
\]

Proposition~\ref{prop:Expansion 1} carries over to expansions in other
neighborhoods via linear equivariance: For any fixed $B \in\Rqqns$
and $A \in\Rqqsym$ we have by \eqref{eq:equivariance L_rho},
\begin{align*}
L_\rho(B \exp(A) B^\top, Q) - L_\rho(BB^\top, Q) \
&= \ L_\rho(\exp(A), Q_B) \\
&= \ \langle A, G_\rho(Q_B) \rangle+ R_\rho(A, Q_B),
\end{align*}
where
\begin{align*}
\bigl| \langle A, G_\rho(Q_B)\rangle\bigr| \
&\le\ (q + J_\rho(Q_B)) \|A\|, \\
|R_\rho(A,Q_B)| \
&\le\ \bigl( J_\rho(e^{\|A\|},Q_B) - J_\rho(e^{-\|A\|},Q_B) \bigr)
\|A\|
+ J_\rho(Q_B) e^{\|A\|} \|A\|^2/2.
\end{align*}
Moreover, with $\Sigma:= BB^\top$, Lemma~\ref{lem:Trace
inequalities} and monotonicity of $\psi$ yield
\begin{equation}
\label{ineq:JQ}
J_\rho(\lambda,Q_B)
\ = \ \int\psi(\lambda\tr(\Sigma^{-1} M)) \, Q(dM)
\ \le\ J_\rho(\lambda/\lambda_{\rm min}(\Sigma),Q).
\end{equation}
Note also that
\[
G_\rho(Q_B) \ = \ B^{-1} \bigl( \Sigma- \Psi_\rho(\Sigma,Q) \bigr
) B^{-\top},
\]
so the fixed-point equation \eqref{eq:Fixed point} in Theorem~\ref
{thm:Uniqueness} is satisfied if, and only if, $G_\rho(Q_B) = 0$.

Proposition~\ref{prop:Expansion 1} implies also that $L_\rho(\cdot
,Q)$ is a continuously differentiable and locally Lipschitz-continuous
function on $\Rqqsympd$ in the usual sense:

\begin{Corollary}[Smoothness]
\label{cor:Diff.and.Lipschitz}
The function $L_\rho(\cdot,Q)$ is continuously differentiable on
$\Rqqsympd$ with gradient
\begin{align*}
\nabla L_\rho(\Sigma,Q) \
&= \ \Sigma^{-1} - \int\rho'(\tr(\Sigma^{-1}M)) \Sigma^{-1} M
\Sigma^{-1} \, Q(dM) \\
&= \ B^{-1} G_\rho(Q_B) B^{-1}
\end{align*}
with $B := \Sigma^{1/2}$. Moreover, let $K$ be a convex subset of
$\Rqqsympd$ with $\lambda_{\rm min}(K) := \inf_{\Sigma\in K}
\lambda_{\rm min}(\Sigma) > 0$. Then for $\Sigma_0, \Sigma_1 \in K$,
\[
\bigl| L_\rho(\Sigma_1,Q) - L_\rho(\Sigma_0,Q) \bigr|
\ \le\ \bigl( q + J_\rho(\lambda_{\rm min}(K)^{-1},Q) \bigr)
\lambda_{\rm min}(K)^{-1} \|\Sigma_1 - \Sigma_0\|.
\]
\end{Corollary}

\subsection{Convexity and coercivity}

Theorem~\ref{thm:Uniqueness} follows essentially from the next two
results. The first one provides a surrogate for the simpler claim that
$L_\rho(\Sigma,Q)$ is a convex function of $\log(\Sigma)$. The
second one deals with the behavior of $L_\rho(\Sigma,Q)$ as $\|\log
(\Sigma)\| \to\infty$.

\begin{Proposition}[Convexity]
\label{prop:Convexity}
For any fixed $B \in\Rqqns$ and $A \in\Rqqsym$,
\[
\R\ni t \ \mapsto\ L_\rho(B \exp(tA) B^\top, Q)
\]
is a convex function. This convexity is strict if, and only if,
\[
\begin{cases}
Q \bigl( \bigcup_{i=1}^\ell\M(B\V_i) \bigr) \ < \ 1
& \text{in Case~0}, \\[0.5ex]
Q \bigl( \M(B \V_0) \bigr) \ < \ 1
& \text{in Case~1},
\end{cases}
\]
where $\V_1, \ldots, \V_\ell$ are the eigenspaces of $A$, and $\V
_0 := \{x \in\R^q : Ax = 0\}$.
\end{Proposition}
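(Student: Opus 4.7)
The plan is to reduce the claim to a one-variable convexity question via the equivariance identity \eqref{eq:equivariance L_rho}. Writing $L_\rho(B\exp(tA)B^\top, Q) = L_\rho(\exp(tA), Q_B) + L_\rho(BB^\top, Q)$ and noting that the second summand is $t$-independent, and that $\log\det(\exp(tA)) = t\tr(A)$ contributes linearly in $t$, the assertion reduces to convexity of $t \mapsto \rho(\tr(\exp(-tA)M'))$ for each $M' \in \Rqqsympsd$ (in Case~0 only for $M' \ne 0$, which is harmless since $Q(\{0\}) = 0$).

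Using the spectral decomposition $A = \sum_{i=1}^\ell \lambda_i P_i$, where $P_i$ is the orthogonal projection onto the eigenspace $\V_i$, one has $\tr(\exp(-tA)M') = \sum_i c_i e^{-t\lambda_i}$ with $c_i := \tr(P_i M' P_i) \ge 0$. Since $c_i = 0$ iff $M'\R^q \subset \V_i^\perp$, the identity $\bigcap_{i\ne j}\V_i^\perp = \V_j$ gives: at most one $c_i$ is positive iff $M' \in \M(\V_j)$ for some $j$. Setting $f(t) := \tr(\exp(-tA)M')$ and $g(t) := \log f(t)$, the function $g$ is a log-sum-exp in $t$, hence convex; it is constant iff only an index with $\lambda_i = 0$ contributes, i.e.\ $M'\R^q \subset \V_0$. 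Now $h(u) := \rho(e^u)$ satisfies $h'(u) = \psi(e^u)$; under the standing assumptions $h$ is nondecreasing and convex, with $h(u) = qu$ in Case~0, while in Case~1 it is strictly convex (since $\psi$ is strictly increasing there). Hence $t \mapsto \rho(f(t)) = h(g(t))$ is convex as the composition of a convex nondecreasing function with a convex function; integrating against $Q_B$ and adding $t\tr(A)$ yields convexity of $t \mapsto L_\rho(B\exp(tA)B^\top, Q)$.

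For the strict-convexity clause one examines when $h \circ g$ fails to be strictly convex. In Case~0, $h$ is linear so $h \circ g$ is not strictly convex iff $g$ is affine, iff $M' \in \bigcup_j \M(\V_j)$. In Case~1, $h$ is strictly convex and strictly increasing, so $h \circ g$ is strictly convex whenever $g$ is nonconstant; failure therefore occurs iff $g$ is constant, iff $M' \in \M(\V_0)$. Translating via $M = BM'B^\top$ (so that $M\R^q = BM'\R^q$), these exceptional sets become $\bigcup_j \M(B\V_j)$ and $\M(B\V_0)$; strict convexity of the integral fails iff $Q$ assigns full mass to the respective exceptional set, which is the stated condition.

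The main obstacle, I expect, is the careful bookkeeping in Case~1: one must verify that $h \circ g$ is strictly convex not only when $g$ is strictly convex but already when $g$ is any nonconstant affine function (using strict convexity of $h$ throughout $\R$), and then distinguish the single exceptional subspace $\V_0$ from the union $\bigcup_j \V_j$ of Case~0. The remaining ingredients are standard: log-sum-exp convexity and composition rules for convex functions.
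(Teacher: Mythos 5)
Your proposal is correct and follows essentially the same route as the paper: reduce via the equivariance identity \eqref{eq:equivariance L_rho}, observe that $g(t)=\log\tr(\exp(-tA)M)$ is convex (log-sum-exp over the eigenvalues of $A$, with the affine/strictly-convex dichotomy governed by how many eigenspaces the column space of $M$ meets), and exploit the monotonicity of $\psi$. The only difference is cosmetic: you establish convexity of $\rho(\tr(\exp(-tA)M))=h(g(t))$ with $h(u):=\rho(e^u)$ directly via the composition rule for a nondecreasing convex outer function, whereas the paper shows the derivative $\psi(e^{g(t)})g'(t)$ is nondecreasing by a case distinction --- these are the same computation.
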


\begin{Proposition}[Coercivity]
\label{prop:Coercivity}
Let $B$ be an arbitrary fixed matrix in $\Rqqns$.  In Case~0,
\[
\lim_{\|A\| \to\infty, \, \tr(A) = 0} \, L_\rho(B\exp(A)B^\top,
Q) \ = \ \infty
\]
if, and only if, Condition~0 is true.  In Case~1,
\[
\lim_{\|A\| \to\infty} \, L_\rho(B\exp(A)B^\top, Q) \ = \ \infty
\]
if, and only if, Condition~1 is true.
\end{Proposition}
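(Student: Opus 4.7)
The plan is to reduce to $B = I_q$ via the linear equivariance identity \eqref{eq:equivariance L_rho}, which gives $L_\rho(B\exp(A)B^\top,Q) - L_\rho(BB^\top,Q) = L_\rho(\exp(A),Q_B)$. Since $Q_B(\M(\V)) = Q(\M(B\V))$ and $\V \mapsto B\V$ is a dimension-preserving bijection on subspaces of $\R^q$, the conditions \eqref{eq:Uniqueness Case 0} and \eqref{eq:Uniqueness Case 1} transfer unchanged between $Q$ and $Q_B$, so I may assume $B = I_q$ throughout.

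For the \emph{necessity} direction, if \eqref{eq:Uniqueness Case 0} fails at some $\V$ with $1 \le d := \dim \V < q$ and $d^\perp := q - d$, I take $A_t := tP_\V - (td/d^\perp)P_{\V^\perp}$, so $\tr A_t = 0$ and $\|A_t\| \to \infty$. Elementary bounds, namely $\tr(\exp(-A_t)M) \le (e^{-t}+e^{td/d^\perp})\tr M$ in general and $= e^{-t}\tr M$ on $\M(\V)$, yield
\[
L_\rho(\exp(A_t),Q) \ \le\ \frac{qt}{d^\perp}\bigl[d - q\,Q(\M(\V))\bigr] + q\log 2,
\]
which stays bounded above as $t \to \infty$ whenever $Q(\M(\V)) \ge d/q$. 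An analogous computation in Case~1 using $A_t := -t P_{\V'^\perp}$ (and $A_t := -tI_q$ for $\V' = \{0\}$), together with the inequality $\rho(\lambda a) - \rho(a) \le \psi(\infty)\log\lambda$ for $\lambda \ge 1$ (a consequence of Lemma~\ref{lem:Expansion 1 of rho}), produces an upper bound whose leading coefficient in $t$ becomes nonpositive precisely when \eqref{eq:Uniqueness Case 1} fails at $\V'$.

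For the \emph{sufficiency} direction, assume the relevant condition and, for contradiction, suppose $A_n$ with $\|A_n\| \to \infty$ satisfies $L_\rho(\exp(A_n),Q) \le C$. Setting $B_n := A_n/\|A_n\|$, Proposition~\ref{prop:Convexity} makes $t \mapsto L_\rho(\exp(tB_n),Q)$ convex; since it equals $0$ at $t = 0$ and is $\le C$ at $t = \|A_n\|$, it is $\le \max(0,C)$ on the whole interval $[0,\|A_n\|]$. Compactness of $\{B \in \Rqqsym : \|B\| = 1\}$ (with $\tr B = 0$ in Case~0, preserved in the limit) together with continuity of $L_\rho(\cdot,Q)$ on $\Rqqsympd$ (a consequence of Proposition~\ref{prop:Expansion 1}) yields a subsequential limit $B$ for which $L_\rho(\exp(tB),Q) \le \max(0,C)$ for every $t \ge 0$. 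Using the spectral decomposition $B = \sum_{j=1}^K \mu_j P_{W_j}$ with $\mu_1 > \cdots > \mu_K$ and the flag $\V_j := W_1 \oplus \cdots \oplus W_j$, for $M \ne 0$ one has $\tr(\exp(-tB)M) \sim e^{-t\mu_{k(M)}} \tr(P_{W_{k(M)}} M)$ as $t \to \infty$, where $k(M) := \min\{j : M \in \M(\V_j)\}$. The sandwich $e^{-t\mu_1}\tr M \le \tr(\exp(-tB)M) \le e^{-t\mu_K}\tr M$ combined with Lemma~\ref{lem:Expansion 1 of rho} produces an integrable envelope for $t^{-1}[\rho(\tr(\exp(-tB)M)) - \rho(\tr M)]$, and the dominated convergence theorem delivers
\[
\frac{L_\rho(\exp(tB),Q)}{t} \ \longrightarrow\
\begin{cases} -q \sum_j \mu_j\, q_j & \text{in Case~0}, \\[0.3ex] \psi(\infty) \sum_{j:\,\mu_j<0}(-\mu_j)\,q_j + \tr B & \text{in Case~1}, \end{cases}
\]
with $q_j := Q(\M(\V_j)) - Q(\M(\V_{j-1}))$. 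An Abel summation, keyed to the identity $\sum_j \mu_j \dim(W_j) = \tr B$, rewrites each limit as $\sum_{j=1}^{K-1}(\mu_j - \mu_{j+1}) \cdot (\text{quantity made strictly positive by \eqref{eq:Uniqueness Case 0}, resp.\ \eqref{eq:Uniqueness Case 1}})$; hence the right-hand slope is strictly positive, $L_\rho(\exp(tB),Q) \to \infty$, and the contradiction is complete.

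The hardest step will be the Abel-summation bookkeeping in Case~1: because the eigenvalues of $B$ can straddle zero, the calculation must be organized around the sign-change index $j_0$ with $\mu_{j_0} > 0 > \mu_{j_0+1}$, invoking \eqref{eq:Uniqueness Case 1} both at $\V_{j_0}$ and at the intermediate members of the flag. When $\psi(\infty) = \infty$, the uniform dominating bound $\psi(\infty)\max(-\mu_K,0)$ is no longer available, so I would split the integral at $\{k(M) \le j_0\}$ (where the integrand stays bounded in $t$) and apply Fatou's lemma on the complement (where the integrand tends to $+\infty$ pointwise), recovering the same lower-bound conclusion.
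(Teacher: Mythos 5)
Your proposal is correct and follows essentially the same route as the paper's proof: reduction to $B = I_q$ by equivariance, explicit bad directions built from the failing subspace for necessity, and for sufficiency a directional analysis along the flag of eigenspaces of a limiting unit direction, computing the asymptotic slope as a sum of $Q(\M(\V_j)) - Q(\M(\V_{j-1}))$ terms and Abel-summing against \eqref{eq:Uniqueness Case 0} resp.\ \eqref{eq:Uniqueness Case 1}, with compactness of the unit sphere upgrading directional to uniform coercivity. The only differences are technical (you bound secant slopes $t^{-1}L_\rho(\exp(tB),Q)$ via dominated convergence and a subsequence contradiction, whereas the paper takes the monotone limit of the derivative $h'(t,A)$ and uses an open cover of the sphere), and in Case~1 your Abel summation must also include the $j=0$ term of the flag, i.e.\ $\V_0=\{0\}$, which is where the $\dim(\V)=0$ instance of \eqref{eq:Uniqueness Case 1} enters.
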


The convexity property in Proposition~\ref{prop:Convexity} is
sometimes called ``geodesic convexity'' (cf.\ Wiesel, \citeyear
{Wiesel_2012}). This name stems from the fact that for arbitrary
matrices $\Sigma_0 = BB^\top$ and $\Sigma_1 = B \exp(A) B^\top$ in
$\Rqqsympd$, the path
\[
[0,1] \ni t \ \mapsto\ \Gamma(t) := B \exp(tA) B^\top
\]
minimizes the ``length''
\[
\int_0^1 \bigl\| \Gamma(t)^{-1/2} \Gamma'(t) \Gamma(t)^{-1/2}
\bigr\|_F \, dt
\]
over all continuously differentiable functions $\Gamma: [0,1] \to
\Rqqsympd$ with $\Gamma(0) = \Sigma_0$ and $\Gamma(1) = \Sigma_1$;
see Bhatia (\citeyear{Bhatia_2007}, Chapter~6).

\subsection{Second-order smoothness of the criterion function}

In order to prove differentiability of $\bSigma_\rho(\cdot)$, we
need second order Taylor expansions of $L_\rho(\cdot,Q)$. These are
also useful to replace the fixed-point algorithm described earlier by
faster methods, see D\"umbgen et al.\ (\citeyear{Duembgen_etal_2013}).

From now on we assume that $\rho$ is twice continuously differentiable
on $(0,\infty)$. In addition to $\psi(s) = s \rho'(s)$ we consider
\[
\psi_2(s) \ := \ s \psi'(s) \ = \ \psi(s) + s^2 \rho''(s).
\]
In Case~0, $\psi\equiv q$, so $\psi' \equiv\psi_2 \equiv0$. Case~1
is modified as follows:

\paragraph{Case 1'}
We assume that $\psi' > 0$ and that $\psi$ has limits $\psi(0) = 0$
and $\psi(\infty) \in(q,\infty]$. Moreover we assume that
\begin{equation}
\label{eq:Finite moments'}
\int\psi(\tr(M)) \, Q(dM) \ < \ \infty
\end{equation}
and that there exists a constant $\kappa> 0$ such that
\begin{equation}
\label{ineq:psi2}
\psi_2(s) \ \le\ \kappa\psi(s)
\quad\text{for all} \ s > 0.
\end{equation}

\begin{Remark}
Inequality \eqref{ineq:psi2} is mainly for convenience and to avoid
additional integrability conditions for $\psi_2$. It also allows to
replace \eqref{eq:Finite moments} with the simpler condition \eqref
{eq:Finite moments'}, see Lemma~\ref{lem:psi.and.kappa} below.
It follows from $\psi_2(s) = \psi(s) + s^2 \rho''(s)$ and $\psi,
\psi' > 0$ that $s^2 \rho''(s) = \psi_2(s) - \psi(s) \in\bigl(
-\psi(s), \psi_2(s) \bigr)$. Hence inequality \eqref{ineq:psi2} is
equivalent to the existence of a constant $\tilde{\kappa}$ such that
\begin{equation}
\label{ineq:psi2'}
s^2 |\rho''(s)| \ \le\ \tilde{\kappa} \psi(s)
\quad\text{for all} \ s > 0.
\end{equation}
\end{Remark}

\begin{Remark}
Suppose that $\rho'$ is non-increasing, i.e.\ $\rho'' \le0$. Then $0
< \psi_2(s) \le\psi(s)$ and $-\psi(s) < s^2 \rho''(s) \le0$.
Hence \eqref{ineq:psi2} and \eqref{ineq:psi2'} are satisfied with
\mbox{$\kappa= \tilde{\kappa} = 1$}.
\end{Remark}

\begin{Example}[Multivariate elliptical Weibull-distributions]
In case of $\rho(s) := s^\gamma$ for a constant $\gamma> 0$, we have
$\psi(s) = \gamma s^\gamma$ and
\[
s^2 \rho''(s) \ = \ (\gamma- 1) \psi(s),
\quad
\psi_2(s) \ = \ \gamma\psi(s),
\]
so \eqref{ineq:psi2} and \eqref{ineq:psi2'} are satisfied with
$\kappa= \gamma$ and $\tilde{\kappa} = |\gamma- 1|$.
\end{Example}

\begin{Example}
In case of $\rho(s) := (\nu+ q) \log(1 + s^2)/2$ for a constant $\nu
> 0$, we have $\psi(s) = (\nu+ q) s^2/(1 + s^2)$ and
\[
s^2 \rho''(s) \ = \ \bigl( 1 - 2 \psi(s)/\psi(\infty) \bigr) \psi
(s),
\quad
\psi_2(s) \ = \ 2 \bigl( 1 - \psi(s)/\psi(\infty) \bigr) \psi(s),
\]
so \eqref{ineq:psi2} and \eqref{ineq:psi2'} are satisfied with
$\kappa= 2$ and $\tilde{\kappa} = 1$.
\end{Example}

\begin{Lemma}
\label{lem:psi.and.kappa}
Let $\phi: (0,\infty) \to(0,\infty)$ be a differentiable function.
For any $\kappa\in\R$ the following two statements are equivalent:
\begin{align}
\label{ineq:psi.and.kappa.1}
s \phi'(s) \
&\le\ \kappa\phi(s)
\quad\text{for all} \ s > 0; \\
\label{ineq:psi.and.kappa.2}
\phi(\lambda s) \
&\le\ \lambda^\kappa\phi(s)
\quad\text{for all} \ s > 0 \ \text{and} \ \lambda> 1.
\end{align}
\end{Lemma}

Now we are ready to extend the expansion of $L_\rho(\cdot,Q)$ around
$I_q$ from Proposition~\ref{prop:Expansion 1}:

\begin{Proposition}[2nd order Taylor expansion]
\label{prop:Expansion 2}
In Case~0 and Case~1', for arbitrary $A \in\Rqqsym$,
\begin{equation}
\label{basic expansion}
L_\rho(\exp(A),Q)
\ = \ \langle A, G_\rho(Q) \rangle+ 2^{-1} H_\rho(A,Q) + R_{\rho,2}(A,Q)\vadjust{\goodbreak}
\end{equation}
with the gradient $G_\rho(Q)$ as in Proposition~\ref{prop:Expansion
1}, the quadratic term
\begin{align*}
H_\rho(A,Q) \
&:= \ \int\bigl( \rho'(\tr(M)) \tr(A^2M) + \rho''(\tr(M)) \tr
(AM)^2 \bigr) \, Q(dM) \\
&= \ \tr(A^2 \Psi_\rho(Q)) + \int\rho''(\tr(M)) \tr(AM)^2 \, Q(dM)
\end{align*}
and a remainder term $R_{\rho,2}(A,Q)$ satisfying the following inequalities:
\begin{align}
\label{eq:Expansion 2a}
H_\rho(A,Q) \
&\in\ \bigl[ 0, (1 + \kappa) J_\rho(Q) \|A\|^2 \bigr], \\
\label{eq:Expansion 2b}
|R_{\rho,2}(A,Q)| \
&\le\ \Omega(\|A\|,Q) \|A\|^2/2
+ (\kappa+ 1/7) J_\rho(Q) \|A\|^3
\end{align}
with
\[
\Omega(\delta,Q) \ := \ \int\sup_{z \in[-\delta,\delta]}
\bigl| \psi_2(e^z \tr(M)) - \psi_2(\tr(M)) \bigr| \, Q(dM).
\]
Moreover,
\begin{equation}
\label{eq:Expansion 2c}
H_\rho(A,Q) \ > \ 0 \quad\text{if} \
\begin{cases}
Q \bigl( \bigcup_{i=1}^\ell\M(\V_i) \bigr) \ < \ 1
& \text{in Case 0}, \\
Q(\M(\V_0)) \ < \ 1
& \text{in Case 1'},
\end{cases}
\end{equation}
where $\V_1,\ldots,\V_\ell$ are the eigenspaces of $A$, and $\V_0
:= \{x \in\R^q : Ax = 0\}$.
\end{Proposition}

Note that $\Omega(\delta,Q)$ is continuous in $\delta\ge0$ with
$\Omega(0,Q) = 0$. This follows from the fact that
\[
\sup_{z \in[-\delta,\delta]}
\bigl| \psi_2(e^z \tr(M)) - \psi_2(\tr(M)) \bigr|
\]
is continuous in $\delta\ge0$ and not greater than $\kappa\psi
(e^\delta\tr(M)) \le\kappa e^{\kappa\delta} \psi(\tr(M))$. In particular,
\[
R_{\rho,2}(A,Q) \ = \ o(\|A\|^2) \quad\text{as} \ A \to0.
\]

Again Proposition~\ref{prop:Expansion 2} carries over to expansions in
other neighborhoods via linear equivariance: For any fixed $B \in
\Rqqns$ and $A \in\Rqqsym$,
\begin{align*}
L_\rho(&B \exp(A) B^\top, Q) - L_\rho(BB^\top, Q) \\
&= \ L_\rho(\exp(A), Q_B)
\ = \ \langle A, G_\rho(Q_B) \rangle+ 2^{-1} H_\rho(A,Q_B) + R_{\rho
,2}(A, Q_B),
\end{align*}
where $R_{\rho,2}(A,Q_B) = o(\|A\|^2)$ as $A \to0$.

\paragraph{The Hessian operator}
The quadratic term $H_\rho(A,Q)$ in Proposition~\ref{prop:Expansion
2} may be written as
\[
H_\rho(A,Q) \ = \ \langle A, H_\rho(Q) A \rangle
\]
with the linear operator $H_\rho(Q) : \Rqqsym\to\Rqqsym$ given by
\begin{align*}
H_\rho(Q) A \
&:= \ \int\bigl( \rho'(\tr(M)) 2^{-1}(AM + MA)
+ \rho''(\tr(M)) \tr(AM) M \bigr) \, Q(dM) \\
&= \ 2_{}^{-1} \bigl( A \Psi_\rho(Q) + \Psi_\rho(Q) A \bigr)
+ \int\rho''(\tr(M)) \tr(AM) M \, Q(dM).
\end{align*}
This operator is self-adjoint, that means, $\langle A, H_\rho(Q) B
\rangle= \langle B, H_\rho(Q)A \rangle$ for arbitrary $A,B \in
\Rqqsym$.

\paragraph{Invertibility in Case~1'}
Under Condition~1 it follows from the last part of Proposition~\ref
{prop:Expansion 2} that $H_\rho(Q)$ is positive definite and thus invertible.

\paragraph{Invertibility in Case~0}
The gradient $G_\rho(Q) = I_q - q \int\tr(M)^{-1} M \, Q(dM)$ is
contained in the linear subspace
\[
\W_0 \ := \ \{A \in\Rqqsym: \tr(A) = 0\},
\]
and for any $A \in\Rqqsym$,
\[
H_\rho(Q) A
\ = \ q \int\bigl( \tr(M)^{-1} 2^{-1}(AM + MA)
- \tr(M)^{-2} \tr(AM) M \bigr) \, Q(dM)
\]
belongs to $\W_0$, too. Hence we view $H_\rho(Q)$ as a linear
operator from $\W_0$ to $\W_0$. Under Condition~0, the last part of
Proposition~\ref{prop:Expansion 2} implies that this operator is
positive definite und thus invertible.


\section{Continuity, consistency and differentiability}
\label{sec:Scatter 2}

In this section we derive various properties of $\bSigma_\rho(\cdot
)$ and related limit theorems. The arguments we use are adaptations of
standard arguments in the statistical literature, e.g.\ the monographs
mentioned in the introduction. Related are also the papers by Haberman
(\citeyear{Haberman_1989}) and Niemiro (\citeyear{Niemiro_1992})
about $M$-estimation with convex criterion functions.

Throughout this section let $Q$ be a distribution in $\QQ_\rho$ and define
\[
\mathbb{Y} \ := \
\begin{cases}
\Rqqsympsd\setminus\{0\} & \text{in Case~0}, \\
\Rqqsympsd & \text{in Case~1}.
\end{cases}
\]
Moreover we consider the linear space
\[
\W\ := \
\begin{cases}
\{A \in\Rqqsym: \tr(A) = 0\} & \text{in Case~0}, \\
\Rqqsym & \text{in Case~1}.
\end{cases}
\]
Recall that in Case~1', $H_\rho(Q) : \W\to\W$ is an invertible
linear operator.

Unless stated otherwise, all subsequent asymptotic statements refer to
the sequence index $n$ tending to $\infty$. Furthermore, ``$\to_p$''
and ``$\to_w$'' stand for convergence in probability and weak
convergence, respectively.

\subsection{Continuity}
\label{subsec:Continuity}

Our first result establishes a certain continuity property of $\bSigma
_\rho(\cdot)$.

\begin{Theorem}[Continuity I]
\label{thm:Continuity}
Let $(Q_n)_n$ be a sequence of probability distributions on $\mathbb
{Y}$ converging weakly to $Q$. In Case~1 suppose in addition that all
$Q_n$ satisfy \eqref{eq:Finite moments} and that
\begin{equation}
\label{eq:Continuity}
\int\psi\bigl( \lambda_o \tr(\bSigma_\rho(Q)^{-1} M) \bigr) \, Q_n(dM)
\ \to\ \int\psi\bigl( \lambda_o \tr(\bSigma_\rho(Q)^{-1} M)
\bigr) \, Q(dM)
\end{equation}
for some $\lambda_o > 1$. Then $Q_n \in\QQ_\rho$ for sufficiently
large $n$, and
\[
\bSigma_\rho(Q_n) \ \to\ \bSigma(Q).
\]
\end{Theorem}

\begin{Remark}[Weak Continuity]
In case of $\psi(\infty) < \infty$, Condition~\eqref{eq:Continuity}
is satisfied for any $\Sigma_o \in\Rqqsympd$ because $Q_n \to_w Q$.
Thus Theorem~\ref{thm:Continuity} shows that the set $\QQ_\rho$ is
open in the topology of weak convergence of probability measures on
$\mathbb{Y}$, and that the functional $\bSigma_\rho$ is weakly
continuous on $\QQ_\rho$.
\end{Remark}

Our proof of Theorem~\ref{thm:Continuity} covers also the situation of
random distributions $\hat{Q}_n$ in place of $Q_n$. Indeed the
following result is true:

\begin{Theorem}[Continuity II]
\label{thm:Consistency}
Let $\hat{Q}_1, \hat{Q}_2, \hat{Q}_3, \ldots$ be random
distributions on $\mathbb{Y}$ such that for any bounded and continuous
function $f : \mathbb{Y} \to\R$,
\begin{equation}
\label{eq:Consistency 1}
\int f \, d\hat{Q}_n
\ \to_p \ \int f \, dQ.
\end{equation}
In Case~1 suppose further that $\hat{Q}_n$ satisfies \eqref{eq:Finite
moments} almost surely and that
\begin{equation}
\label{eq:Consistency 2}
\int\psi\bigl( \lambda_o \tr(\bSigma_\rho(Q)^{-1} M) \bigr) \,
\hat{Q}_n(dM)
\ \to_p \ \int\psi\bigl( \lambda_o \tr(\bSigma_\rho(Q)^{-1} M)
\bigr) \, Q(dM)
\end{equation}
for some $\lambda_o > 1$. Then $\Pr(\hat{Q}_n \in\QQ_\rho) \to1$ and
\[
\bSigma_\rho(\hat{Q}_n) \ \to_p \ \bSigma_\rho(Q).
\]
\end{Theorem}

In case of $\psi(\infty) < \infty$, one could derive Theorem~\ref
{thm:Consistency} easily from Theorem~\ref{thm:Continuity} by means of
metrics for weak convergence as described in by Dudley (\citeyear
{Dudley_2002}, Section~11.3). In the general setting, however, it is
easier to prove Theorem~\ref{thm:Consistency} directly and realize
that Theorem~\ref{thm:Continuity} is just a special case of it.

\subsection{Differentiability}
\label{subsec:Diffability}

In this subsection we refine Theorem~\ref{thm:Consistency} with an
asymptotic linear expansion of $\bSigma_\rho(\cdot)$ in Cases~0 and
1'. By linear equivariance it suffices to consider the case
\[
\bSigma_\rho(Q) \ = \ I_q.
\]

\begin{Theorem}[Differentiability]
\label{thm:Diffability}
Let $\hat{Q}_1$, $\hat{Q}_2$, $\hat{Q}_3$, \ldots be random
distributions on $\mathbb{Y}$ satisfying Condition~\eqref
{eq:Consistency 1}. In Case~1' suppose further that for all $n$, $\int
\psi(\tr(M)) \, \hat{Q}_n(dM) < \infty$ almost surely, and
\begin{equation}
\label{eq:Consistency 2'}
\int\psi(\tr(M)) \, \hat{Q}_n(dM)
\ \to_p \ \int\psi(\tr(M)) \, Q(dM).
\end{equation}
Then in Cases~0 and 1',
\[
G_\rho(\hat{Q}_n) \ \to_p \ 0,
\quad\text{and}\quad
H_\rho(\hat{Q}_n) \ \to_p \ H_\rho(Q).
\]
Moreover, $\Pr(\hat{Q}_n \in\QQ_\rho) \to1$ and
\begin{equation}
\label{eq:Linear approximation}
\log(\bSigma_\rho(\hat{Q}_n))
\ = \ - H_\rho(Q)^{-1}G_\rho(\hat{Q}_n)
+ o_p \bigl( \|G_\rho(\hat{Q}_n)\| \bigr).
\end{equation}
\end{Theorem}

\begin{Remark}
Condition~\eqref{eq:Consistency 2'} seems to be weaker than \eqref
{eq:Consistency 2} at first glance. But in Case~1',
\[
\psi\bigl( \lambda_o \tr(M) \bigr)
\ \le\ \lambda_o^\kappa\psi(\tr(M))
\]
for any $\lambda_o > 1$ and $M \in\mathbb{Y}$ by Lemma~\ref
{lem:psi.and.kappa}. Consequently \eqref{eq:Consistency 2} follows
from \eqref{eq:Consistency 1} and \eqref{eq:Consistency 2'} by virtue
of Lemma~\ref{lem:Mallows convergence} in Section~\ref{sec:Proofs}.
\end{Remark}

\begin{Remark}\label{rem: equiv.exp}
Note that the asymptotic expansion \eqref{eq:Linear approximation} is
equivalent to the expansion
\[
\bSigma_\rho(\hat{Q}_n) \ = \ I_q - H_\rho(Q)^{-1}G_\rho(\hat{Q}_n)
+ o_p \bigl( \|G_\rho(\hat{Q}_n)\| \bigr).
\]
\end{Remark}

\begin{Remark}[Weak Differentiability]
In Cases~0 and 1' with $\psi(\infty) < \infty$, Theorem~\ref
{thm:Diffability} shows that the functional $\bSigma_\rho$ is weakly
differentiable on $\QQ_\rho$ in the following sense: Let $Q \in\QQ
_\rho$ and $B := \bSigma_\rho(Q)^{1/2}$. Further let $(Q_n)_n$ be a
sequence of probability distributions in $\QQ_\rho$ converging weakly
to $Q$. Then $G_\rho((Q_n)_B) \to0$ and
\[
\log(B^{-1} \bSigma_\rho(Q_n) B^{-1})
\ = \ - H_\rho(Q_B)^{-1} G_\rho((Q_n)_B)
+ o \bigl( \bigl\| G_\rho((Q_n)_B) \bigr\| \bigr).
\]
\end{Remark}

\subsection{Orthogonally invariant distributions}

The previous differentiability results involve the operator $H_\rho
(Q)$. The latter turns out to have a special structure under a certain
symmetry condition on $Q$:

\begin{Definition}[Orthogonal symmetry]
The distribution $Q$ of a random matrix $M \in\Rqqsympsd$ is called
orthogonally invariant if
\[
\LL(VMV^\top) \ = \ \LL(M)
\quad\text{for any orthogonal matrix} \ V \in\Rqq.
\]
\end{Definition}

This property is closely related to spherically symmetric distributions
on $\R^q$. For instance, let $Q = \LL(XX^\top)$ with a random vector
$X$ with spherically symmetric distribution on $\R^q$. Then $Q$ is
orthogonally invariant. Another example is given by $Q = \LL
(S(X_1,X_2,\ldots,X_k))$ with independent, identically distributed
random vectors $X_1, X_2, \ldots, X_k \in\R^q$ such that $\LL(X_1 -
\mu)$ is spherically symmetric for some $\mu\in\R^q$.

By linear equivariance of $\bSigma_\rho(\cdot)$, orthogonal
invariance of $Q$ implies that $\bSigma_\rho(Q)$ is a positive
multiple of $I_q$. As shown in the subsequent lemma, the operator
$H_\rho(Q)$ has a rather simple form here. It will be convenient to
decompose $\Rqqsym$ as
\[
\Rqqsym\ = \ \W_0 + \W_1
\]
with $\W_0 = \{A \in\Rqqsym: \tr(A) = 0\}$ and $\W_1 := \{s I_q :
s \in\R\}$. Any matrix $A \in\Rqqsym$ has the unique decomposition
\[
A \ = \ A_0 + A_1
\]
with $A_0 := A - q^{-1} \tr(A) I_q \in\W_0$ and $A_1 := q^{-1} \tr
(A) I_q \in\W_1$.

\begin{Lemma}
\label{lem:OrthInvQ}
Suppose that $Q$ is orthogonally invariant, and let $\bSigma_\rho(Q)
= I_q$. Then for $A = A_0 + A_1$ with $A_0 \in\W_0, A_1 \in\W_1$,
\[
H_\rho(Q) A
\ = \ d_0(Q) A_0 + d_1(Q) A_1,
\]
where
\begin{align*}
d_0(Q) \
&:= \ 1 + \frac{2}{q(q+2)}
\int\rho''(\tr(M))
\Bigl( \|M\|_F^2 + \frac{\tr(M)^2 - \|M\|_F^2}{q - 1} \Bigr)
\, Q(dM), \\
d_1(Q) \
&:= \ 1 + \frac{1}{q}
\int\rho''(\tr(M)) \tr(M)^2 \, Q(dM).
\end{align*}
\end{Lemma}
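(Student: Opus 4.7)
The plan is to exploit the $O(q)$-equivariance of $H_\rho(Q)$ that follows from orthogonal invariance of $Q$, combined with the simplification of $H_\rho(Q)$ available when $\bSigma_\rho(Q) = I_q$. By Theorem~\ref{thm:Uniqueness}, $\bSigma_\rho(Q) = I_q$ forces $\Psi_\rho(Q) = I_q$, so (as noted in the text preceding the lemma)
$$H_\rho(Q) A \ = \ A + \int \rho''(\tr(M))\, \tr(AM)\, M \, Q(dM).$$
A substitution $M \mapsto V M V^\top$ in this integral, justified by orthogonal invariance of $Q$, together with the cyclic identity $\tr((VAV^\top) M) = \tr(A \cdot V^\top M V)$, yields
$$H_\rho(Q)(V A V^\top) \ = \ V\,\bigl(H_\rho(Q) A\bigr)\, V^\top \quad \text{for every orthogonal } V \in \Rqq.$$
Since $H_\rho(Q)$ is self-adjoint and $\V_1 = \R I_q$ is the fixed-point subspace of the conjugation action, this equivariance forces $H_\rho(Q)$ to preserve the decomposition $\Rqqsym = \V_0 \oplus \V_1$, mapping each $\V_i$ into itself.

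The coefficient $d_1(Q)$ is then immediate: applying the simplified formula to $I_q$, the remaining integral $\int \rho''(\tr(M))\, \tr(M)\, M \, Q(dM)$ is orthogonally invariant under conjugation, hence a scalar multiple of $I_q$; matching traces identifies the scalar as $q^{-1} \int \rho''(\tr(M)) \tr(M)^2\, Q(dM)$, which is the claimed $d_1(Q)$.

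For $d_0(Q)$, I evaluate $\langle A, H_\rho(Q) A\rangle$ on $A \in \V_0$ and average the integrand $\rho''(\tr(M)) \tr(AM)^2$ over a Haar-distributed orthogonal matrix $U$, using orthogonal invariance of $Q$ once more:
$$\int \rho''(\tr(M))\, \tr(AM)^2\, Q(dM) \ = \ \int \rho''(\tr(M)) \bigl\langle A,\, \Phi(M) A\bigr\rangle\, Q(dM),$$
where $\Phi(M) B := \Ex\bigl[\tr(B \cdot U M U^\top)\, U M U^\top\bigr]$. The operator $\Phi(M)$ is self-adjoint and $O(q)$-equivariant, so by Schur's lemma (using irreducibility of the conjugation action of $O(q)$ on $\V_0$ for $q \ge 2$) it acts as a scalar $\alpha(M)$ on $\V_0$ and a scalar $\gamma(M)$ on $\V_1$. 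These scalars are pinned down by two short computations: $\Phi(M)\, I_q = \tr(M)\, \Ex[U M U^\top] = q^{-1} \tr(M)^2\, I_q$ gives $\gamma(M)$, and $\tr \Phi(M) = \Ex \|U M U^\top\|_F^2 = \|M\|_F^2$ combined with $\dim \V_0 = (q-1)(q+2)/2$ gives $\alpha(M) = \frac{2(\|M\|_F^2 - \tr(M)^2/q)}{(q-1)(q+2)}$. Substituting back and rearranging produces the stated formula for $d_0(Q)$.

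The main technical step is the Schur-type identification of $\Phi(M)$ as a scalar operator on each of $\V_0$ and $\V_1$; once that is in place, everything else is orbit-averaging and elementary trace arithmetic.
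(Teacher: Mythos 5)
Your argument is correct in substance, but it takes a genuinely different route from the paper's. The paper proves the lemma by explicit moment computations: Proposition~\ref{prop:U} evaluates the fourth moments $\Ex(U_{ij}^2U_{k\ell}^2)$ of a Haar-distributed orthogonal matrix (via a Beta distribution and normalization identities), and Proposition~\ref{prop:M} then expands $\Ex(\tr(AM)M)$ coordinatewise in an eigenbasis of $A$ to extract the constants $c_0(\lambda)$, $c_1(\lambda)$. You obtain the same operator by representation theory alone: $O(q)$-equivariance of $H_\rho(Q)$ under conjugation, irreducibility of $\V_0$ together with self-adjointness to force scalar action, and two trace identities, $\Phi(M)I_q=q^{-1}\tr(M)^2 I_q$ and $\tr\Phi(M)=\Ex\|UMU^\top\|_F^2=\|M\|_F^2$, which pin down the two scalars without computing any fourth moment. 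This is cleaner and explains a priori why only the invariants $\tr(M)$ and $\|M\|_F$ can appear; what the paper's computation buys is the explicit moment table of Proposition~\ref{prop:U}. One step you should make explicit is why a self-adjoint $O(q)$-equivariant operator on $\V_0$ is scalar for $q\ge 2$ (irreducibility of the traceless symmetric matrices under conjugation); it is standard but it is the load-bearing step. Finally, a caution about your last line: your $\alpha(M)=\frac{2(\|M\|_F^2-\tr(M)^2/q)}{(q-1)(q+2)}$ is correct and coincides algebraically with the paper's $c_0$ from Proposition~\ref{prop:M}, namely $\frac{2}{q(q+2)}\bigl(\|M\|_F^2-\frac{\tr(M)^2-\|M\|_F^2}{q-1}\bigr)$ --- note the \emph{minus} sign, whereas the lemma as printed has a plus sign in that position. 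The printed statement is inconsistent with the paper's own Proposition~\ref{prop:M} and appears to contain a sign typo; so ``substituting back produces the stated formula'' holds only for the corrected version. The two versions coincide exactly when $\mathrm{rank}(M)\le 1$ (since then $\|M\|_F^2=\tr(M)^2$), which is why the rank-one specializations following the lemma do not reveal the discrepancy.
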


\paragraph{Implications for rank one distributions}
Suppose that a random matrix $M \sim Q$ satisfies $\mathrm{rank}(M)
\le1$ almost surely. This is true in settings \eqref{eq:Example Q 1}
and \eqref{eq:Example Q 2} with $k = 2$. Then $\|M\|_F = \tr(M)$
almost surely, so
\begin{align*}
d_0(Q) \
&= \ 1 + \frac{2}{q(q+2)}
\int\rho''(\tr(M)) \tr(M)^2 \, Q(dM), \\
d_1(Q) \
&= \ 1 + \frac{1}{q}
\int\rho''(\tr(M)) \tr(M)^2 \, Q(dM).
\end{align*}

\paragraph{Implications for Case~0}
Recall that in Case~0, $\rho(s) = q \log(s)$, so $\rho'(s) = q/s$
and $\rho''(s) = -q/s^2$. Thus $d_1(Q) = 0$, and for $A = A_0 + A_1$
with \mbox{$A_0 \in\V_0, A_1 \in\V_1$},
\[
H_\rho(Q) A
\ = \ d_0(Q) A_0
\]
with
\[
d_0(Q) \ = \ 1 - \frac{2}{q+2}
\int\frac{(q-2) \|M\|_F^2/\tr(M)^2 + 1}{q-1} \, Q(dM).
\]
In particular, if $\mathrm{rank}(M) = 1$ almost surely, then
\[
H_\rho(Q) A \ = \ \frac{q}{q+2} \, A_0.
\]

\subsection{Consistency and Central Limit Theorems}

In this section we apply the previous results to particular empirical
distributions related to Settings~\eqref{eq:Example Q 1} and \eqref
{eq:Example Q 2}. For convenience we restrict our attention to Cases~0
and 1'.

For some fixed integer $k \ge1$ and arbitrary integers $n \ge k$ we
consider distributions
\[
Q \ := \ Q^k(P)
\quad\text{and}\quad
Q_n \ := \ Q^k(P_n)
\]
in $\QQ_\rho$ with distributions $P, P_n$ on $\R^q$ such that
\[
\bSigma_\rho(Q)
\ = \ I_q
\ = \ \bSigma_\rho(Q_n)
\quad\text{for all} \ n \ge k.
\]
Recall that in Case~0, $\tilde{Q} = Q^k(\tilde{P}) \in\QQ_\rho$
implies that
\[
\begin{cases}
\tilde{P}(\{0\}) = 0
&\text{if} \ k = 1, \\
\tilde{P}(\{x\}) = 0 \ \ \text{for all} \ x \in\R^q
&\text{if} \ k \ge2.
\end{cases}
\]

\paragraph{Additional assumptions}
We assume that
\[
P_n \ \to_w \ P.
\]
Further, for a certain exponent $m \ge1$ we assume that
\[
\int\psi(\|x\|^2)^m \, P_n(dx)
\ \to\ \int\psi(\|x\|^2)^m \, P_n(dx),
\]
where all integrals on the left and right hand side are finite.

\smallskip

Note that for any exponent $m \ge1$, the second part of the additional
assumptions is a consequence of the first part whenever $\psi(\infty)
< \infty$.

Now we consider for $n \ge k$ independent random vectors $X_{n1},
X_{n2}, \ldots, X_{nn}$ with distribution $P_n$ and define
\[
\hat{Q}_n \ := \
\begin{cases}
\displaystyle
\frac{1}{n} \sum_{i=1}^n
\delta_{X_{ni}^{} X_{ni}^\top}^{}
& \text{if} \ k = 1, \\[2.5ex]
\displaystyle
\binom{n}{k}^{-1} \sum_{1\le i_1 < \cdots< i_k \le n}
\delta_{S(X_{ni_{1}}, \ldots, X_{ni_{k}})}^{}
& \text{if} \ k \ge2.
\end{cases}
\]

Our first result proves consistency of $\bSigma_\rho(\hat{Q}_n)$ as
an estimator for \mbox{$\bSigma_\rho(Q_n) = I_q$}. It is essentially a
corollary to Theorem~\ref{thm:Consistency}:

\begin{Theorem}[Consistency]
\label{thm:Consistency S12}
In the setting just described, suppose that the additional assumptions
hold with $m = 1$. Then $\Pr(\hat{Q}_n \in\QQ_\rho) \to1$ and
\[
\bSigma_\rho(\hat{Q}_n) \ \to_p \ I_q.
\]
\end{Theorem}

Our second result provides a precise linear expansion for $\bSigma
_\rho(\hat{Q}_n)$ and is based on Theorem~\ref{thm:Diffability}:

\begin{Theorem}[Linear expansion]
\label{thm:CLT1}
Let \ $\mathbb{X} := \R^q\setminus\{0\}$ in Case~0 with $k = 1$, and
$\mathbb{X} := \R^q$ otherwise. In the just described setting,
suppose that the additional assumptions hold with $m = 2$.
Then $\Pr(\hat{Q}_n \in\QQ_\rho) \to1$ and
\[
\sqrt{n} \log(\bSigma_\rho(\hat{Q}_n))
\ = \ \frac{1}{\sqrt{n}} \sum_{i=1}^n \bigl( Z(X_{ni}) - \Ex
Z(X_{n1}) \bigr)
+ o_p(1)
\]
for some continuous function $Z : \mathbb{X} \to\Rqqsym$ depending
only on $P$ such that
\[
\sup_{x \in\mathbb{X}} \, \frac{\|Z(x)\|}{1 + \psi(\|x\|^2)} \ < \
\infty
\quad\text{and}\quad
\int Z \, dP \ = \ 0.
\]
Precisely, if $k = 1$, then
\[
Z(x) \ := \ H_\rho(Q)_{}^{-1} \bigl( \rho'(\|x\|^2) x x^\top- I_q
\bigr)
\quad\text{and}\quad
\Ex Z(X_{n1}) \ = \ 0.
\]
If $k \ge2$, then
\[
Z(x) \ = \ k H_\rho(Q)^{-1} \Bigl(
\Ex\bigl[ \rho' \bigl( \tr(S(x,X_2,\ldots,X_k)) \bigr)
S(x,X_2,\ldots,X_k) \bigr]
- I_q \Bigr)
\]
with independent random vectors $X_2, \ldots, X_k \sim P$.
\end{Theorem}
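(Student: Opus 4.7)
The plan is to combine the weak differentiability of $\bSigma_\rho$ (Theorem~\ref{thm:WeakDiffability}) with a linearization of the map $Q \mapsto G_\rho(Q)$. Writing $\Psi_\rho(Q_n) = I_q$ (since $\bSigma_\rho(Q_n) = I_q$ is a fixed point of $\Psi_\rho(\cdot,Q_n)$), the idea is first to expand $\bSigma_\rho(\hat{Q}_n) - I_q \approx -H_\rho(Q)^{-1} G_\rho(\hat{Q}_n)$ using Theorem~\ref{thm:WeakDiffability}, and then to recognize $\sqrt{n}\,G_\rho(\hat{Q}_n) = \sqrt{n}(\Psi_\rho(Q_n) - \Psi_\rho(\hat{Q}_n))$ as (for $k=1$) an ordinary empirical process or (for $k\ge 2$) a centered $U$-statistic in the $X_{ni}$, both of order $O_p(n^{-1/2})$ with an explicit linear leading term.

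For the first step, I would transfer Theorem~\ref{thm:WeakDiffability} from a deterministic to a stochastic setting by a subsequence argument: the consistency results of Section~\ref{sec:Scatter} (paragraph ``Consistency II'') give $D(\hat{Q}_n,Q) \to_p 0$, so any subsequence admits a further sub-subsequence along which the convergence holds a.s.; on that sub-subsequence Theorem~\ref{thm:WeakDiffability} applies pathwise and yields
\[
	\bSigma_\rho(\hat{Q}_n) - I_q + H_\rho(Q)^{-1} G_\rho(\hat{Q}_n)
	\ = \ o \bigl( \|G_\rho(\hat{Q}_n)\| \bigr).
\]
Combined with the bound $\|G_\rho(\hat{Q}_n)\| = O_p(n^{-1/2})$ established in the next step, this upgrades the $o$-term to $o_p(n^{-1/2})$. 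Thus it suffices to show
\[
	\sqrt{n}\, G_\rho(\hat{Q}_n)
	\ = \ -\frac{1}{\sqrt{n}} \sum_{i=1}^n \bigl( W(X_{ni}) - \Ex W(X_{n1}) \bigr) + o_p(1)
\]
for an appropriate bounded continuous $W$, and then set $Z := -H_\rho(Q)^{-1} W$.

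For the second step, write $\phi(x_1,\ldots,x_k) := \rho'(\tr S(x_1,\ldots,x_k))\,S(x_1,\ldots,x_k)$, which is bounded on $\mathbb{X}^k$ since $\|\phi\|_F \le \psi(\tr S) \le \psi(\infty) < \infty$ (using $\|M\|_F \le \tr M$ for $M\in\Rqqsympsd$ and monotonicity of $\psi$). When $k=1$, $\Psi_\rho(\hat{Q}_n)$ is an ordinary average and $\sqrt{n}\,G_\rho(\hat{Q}_n)$ equals $\sqrt{n}$ times the sample average of $I_q - \phi(X_{ni})$, which is precisely the required representation with $W(x) = I_q - \rho'(\|x\|^2)xx^\top$. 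When $k\ge 2$, $\Psi_\rho(\hat{Q}_n)$ is a $\Rqqsym$-valued $U$-statistic of order $k$, and I would apply the Hoeffding decomposition under the triangular array law $P_n$: the projection onto first-order functions yields
\[
	\sqrt{n}\bigl( \Psi_\rho(\hat{Q}_n) - \Psi_\rho(Q_n) \bigr)
	\ = \ \frac{k}{\sqrt{n}} \sum_{i=1}^n \bigl( g_n(X_{ni}) - \Ex g_n(X_{n1}) \bigr) + o_p(1),
\]
where $g_n(x) := \Ex_{P_n}[\phi(x,X_{n2},\ldots,X_{nk})]$; boundedness of $\phi$ makes the degenerate higher-order terms of the Hoeffding expansion $O_p(n^{-1})$. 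Weak convergence $P_n \to P$ together with the continuity and uniform boundedness of $\phi$ lets me replace $g_n$ by $g(x) := \Ex_P[\phi(x,X_2,\ldots,X_k)]$ in the sum at cost $o_p(1)$; this gives the stated formula for $Z$.

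The identity $\int Z\,dP = 0$ is immediate from $\Ex_P g(X_1) = \Ex_P \phi(X_1,\ldots,X_k) = \Psi_\rho(Q) = I_q$ (the fixed-point property of $\bSigma_\rho(Q) = I_q$), while boundedness and continuity of $Z$ follow from the remarks above together with invertibility of $H_\rho(Q)$ on $\V_0$ resp.\ $\Rqqsym$ guaranteed by Proposition~\ref{prop:Expansion 2}. The main obstacle I anticipate is the stochastic version of the linearization in the first step: one has to verify that the $o(\|G_\rho(\hat{Q}_n)\|)$ remainder in Theorem~\ref{thm:WeakDiffability} can be made uniform enough to combine with the $n^{-1/2}$ rate of the empirical/$U$-statistic fluctuations, and that the triangular-array Hoeffding decomposition works cleanly despite the varying laws $P_n$; both reduce to quantitative uniform modulus arguments for the map $Q \mapsto H_\rho(Q)^{-1}$ on a weak-neighbourhood of $Q$, using the uniform boundedness of the integrand $\phi$.
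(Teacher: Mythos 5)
Your proposal is correct and follows essentially the same route as the paper: linearize via Theorem~\ref{thm:WeakDiffability} using $D(\hat{Q}_n,Q)\to_p 0$, then represent $-\sqrt{n}\,G_\rho(\hat{Q}_n)$ as an empirical average ($k=1$) or as a $U$-statistic with bounded kernel whose Hoeffding projection gives $k\Ex M(x,X_{n2},\ldots,X_{nk})$, and finally pass from $g_n$ to $g$ by continuity, boundedness and an $L^2$ bound. The only presentational difference is that you make the stochastic application of the deterministic Theorem~\ref{thm:WeakDiffability} explicit via a subsequence argument, which the paper leaves implicit.
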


\begin{Remark}[Central Limit Theorem]
By virtue of the multivariate version of Lindeberg's Central Limit
Theorem, the expansion in Theorem~\ref{thm:CLT1} implies a Central
Limit Theorem for the estimator $\bSigma_\rho(\hat{Q}_n)$. Namely,
\[
\LL\bigl( \sqrt{n} \log(\bSigma_\rho(\hat{Q}_n)) \bigr)
\ \to_w \ \NN_{q\times q} \bigr( 0, \Cov(Z(X)) \bigr)
\]
with $X \sim P$. This means, that for any matrix $A \in\Rqqsym$,
\[
\bigl\langle\sqrt{n} \log(\bSigma_\rho(\hat{Q}_n)), A \bigr
\rangle
\ \to_w \ \NN\bigr( 0, \Var(\langle Z(X), A\rangle) \bigr).
\]
\end{Remark}

\begin{Remark}[Spherical symmetry I]
\label{rem:CLT1.R1}
Let $P$ be spherically symmetric around $0 \in\R^q$. Then the
matrix-valued function $Z$ in Theorem~\ref{thm:CLT1} may be written as
\[
Z(x) \ = \ z_0(\|x\|^2) xx^\top+ z_1(\|x\|^2) I_q
\]
with certain functions $z_0, z_1 : [0,\infty) \to\R$, where $z_1(s)
= - q^{-1} s z_0(s)$ in Case~0.
\end{Remark}

\begin{Remark}[Spherical symmetry II]
\label{rem:CLT1.R2}
Let $P$ be spherically symmetric around $0 \in\R^q$, and let $k = 1$.
Further let
\[
\rho(s) \ = \ (\nu+ q) \log(\nu+ s)
\]
with $\nu= 0$ (Case~0) or $\nu> 0$ (Case~1'). For $x \in\R^q$ we write
\[
xx^\top\ = \ A_0(x) + a(x) I_q + I_q
\]
with $a(x) := q^{-1} \|x\|^2 - 1$, so that $\tr(A_0(x)) = 0$. Then the
matrix-valued function $Z$ in Theorem~\ref{thm:CLT1} is given by
\[
Z(x) \ = \ (\nu+ \|x\|^2)^{-1} \bigl( c_0 A_0(x) + c_1 a(x) I_q \bigr)
\]
with
\[
c_0 \ := \ \frac{(q + \nu)(q + 2)}{q + 2(1 - \beta)\nu/q},
\quad
c_1 \ := \ 1_{[\nu> 0]} \frac{q}{1 - \beta}
\]
and
\[
\beta= \beta(P,\nu) \ := \ \int\frac{(\nu+ q)\nu}{(\nu+ \|x\|
^2)^2} \, P(dx).
\]
\end{Remark}


\section[$M$-functionals of location and scatter]{$\boldsymbol{M}$-functionals of location and scatter}
\label{sec:Location and Scatter}

Now we return to the estimation of location and scatter as in
Section~\ref{subsec:LocationScatter}. We restrict our attention to
$M$-functionals derived from multivariate $t$-distributions with $\nu
\ge1$ degrees of freedom. That means, for an arbitrary distribution
$P$ on $\R^q$ we consider
\[
L(\mu,\Sigma,P)
\ := \ \int\bigl[ \rho\bigl( (x - \mu)^\top\Sigma^{-1} (x - \mu
) \bigr)
- \rho(x^\top x) \bigr] \, P(dx)
+ \log\det(\Sigma)
\]
as in \eqref{eq:def L 2}, where
\[
\rho(s) \ = \ \rho_{\nu,q}(s) := (\nu+ q) \log(\nu+ s).
\]

The reason for the restriction to $\rho_{\nu,q}$ with $\nu\ge1$ is
a nice trick by Kent and Tyler (\citeyear{Kent_Tyler_1991}) to
reduce the location-scatter problem in dimension $q$ to the
scatter-only problem in dimension $q+1$ with $\nu-1$ in place of $\nu$.
As shown by Kent et al.\ (\citeyear{Kent_etal_1994}), the
particular loss functions $\rho_{\nu,q}$ are the only ones for which
this trick works.

For more details about and generalizations of multivariate
$t$-distributions we refer to Lange et al.\
(\citeyear{Lange_etal_1989}) and the monograph by Kotz and
Nadarajah (\citeyear{Kotz_Nadarajah_2004}). An alternative approach
to the location-scatter problem which is closely related to
Tyler's (\citeyear{Tyler_1987a}) scatter functional is presented by
Hettmansperger and Randles (\citeyear{Hettmansperger_Randles_2002}).

\subsection{Existence and uniqueness}

The first question is under what conditions on $P$ the functional
$L(\cdot,\cdot,P)$ admits a unique minimizer $(\bmu(P),\bSigma
(P))$. To this end let
\[
y = y(x)
\ := \
\begin{bmatrix} x \\ 1
\end{bmatrix}
\quad\text{and}\quad
\Gamma
\ := \
\begin{bmatrix}
\Sigma+ \mu\mu^\top& \mu\\ \mu^\top& 1
\end{bmatrix}
=
\begin{bmatrix}
I_q & \mu\\ 0 & 1
\end{bmatrix}
\begin{bmatrix}
\Sigma& 0 \\ 0 & 1
\end{bmatrix}
\begin{bmatrix}
I_q & \mu\\ 0 & 1
\end{bmatrix}
^\top
\]
for $x \in\R^q$ and $(\mu,\Sigma) \in\R^q \times\Rqqsympd$.
Then one can easily verify that
\[
\det(\Gamma) \ = \ \det(\Sigma),
\quad
\Gamma^{-1} \ = \
\begin{bmatrix}
I_q & -\mu\\ 0 & 1
\end{bmatrix}
^\top
\begin{bmatrix}
\Sigma^{-1} & 0 \\ 0 & 1
\end{bmatrix}
\begin{bmatrix}
I_q & -\mu\\ 0 & 1
\end{bmatrix}
\]
and
\[
y^\top\Gamma^{-1} y \ = \ (x - \mu)^\top\Sigma^{-1} (x - \mu) + 1.
\]
Consequently, with
\[
\tilde{P} \ := \ \LL(y(X)), \quad X \sim P,
\]
and
\[
\tilde{\rho}(s) \ := \ \rho(s - 1) \ = \ \rho_{\nu-1,q+1}(s)
\]
we may write
\[
L(\mu,\Sigma,P)
\ = \ \tilde{L}(\Gamma,\tilde{P})
\ := \ \int\bigl[ \tilde{\rho}(y^\top\Gamma^{-1}y) - \tilde{\rho
}(y^\top y) \bigr]
\, \tilde{P}(dy)
+ \log\det(\Gamma).
\]
If a matrix $\Gamma\in\R^{(q+1)\times(q+1)}_{{\rm sym},>0}$
minimizes $\tilde{L}(\cdot,\tilde{P})$, and if
\[
\Gamma_{q+1,q+1} \ = \ 1,
\]
then we may write
\[
\Gamma\ = \
\begin{bmatrix}
\bs{\Sigma}(P) + \bs{\mu}(P)\bs{\mu}(P)^\top& \bs{\mu}(P) \\
\bs{\mu}(P)^\top& 1
\end{bmatrix}
,
\]
and $(\bs{\mu}(P),\bs{\Sigma}(P)) \in\R^q \times\Rqqsympd$
solves the original minimization problem. It will turn out that the
additional constraint $\Gamma_{q+1,q+1} = 1$ poses no problem here.

Concerning the minimization of $\tilde{L}(\cdot,\tilde{P})$ over $\R
^{(q+1)\times(q+1)}_{{\rm sym},>0}$, one can deduce from Theorem~\ref
{thm:Uniqueness} that the following condition on $P$ plays a crucial role:
\begin{align}
\label{eq:ExistenceMVT}
P(a + \mathbb{V}) \ < \ \frac{\dim(\mathbb{V}) + \nu}{q + \nu}
\quad
& \text{for arbitrary} \ a \in\R^q \ \text{and linear} \\[-1.5ex]
\nonumber
& \text{subspaces} \ \mathbb{V} \subset\R^q \
\text{with} \ 0 \le\dim(\mathbb{V}) < q.
\end{align}
Here is the main result:

\begin{Theorem}
\label{thm:MTLS}
In case of $\nu= 1$, the functional $\tilde{L}(\cdot,\tilde{P})$
has a unique minimizer $\Gamma$ with $\Gamma_{q+1,q+1} = 1$ if, and
only if, \eqref{eq:ExistenceMVT} holds true. Moreover, if $\tilde
{\Gamma}$ is some minimizer of $\tilde{L}(\cdot,\tilde{P})$, then
$\Gamma= (\tilde{\Gamma}_{q+1,q+1})^{-1} \tilde{\Gamma}$.

In case of $\nu> 1$, the functional $\tilde{L}(\cdot,\tilde{P})$
has a unique minimizer $\Gamma$ if, and only if, \eqref
{eq:ExistenceMVT} holds true. This minimizer satisfies automatically
$\Gamma_{q+1,q+1} = 1$.
\end{Theorem}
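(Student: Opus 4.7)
The natural strategy is to recognize $\tilde{L}(\cdot,\tilde{P}) = L_{\tilde\rho}(\cdot,\tilde{Q})$, where $\tilde{Q} := \LL(yy^\top)$ with $y = y(X)$, $X \sim P$, and apply Theorem~\ref{thm:Uniqueness} in dimension $q+1$. Here $\tilde\rho(s) = (\nu+q)\log(\nu - 1 + s)$, so $\tilde\psi(s) = (\nu+q)s/(\nu-1+s)$. For $\nu = 1$ this reduces to $\tilde\rho(s) = (q+1)\log s$, i.e.\ Case~0 in dimension $q+1$ (and $\tilde{Q}(\{0\}) = 0$ because $y_{q+1}\equiv 1$). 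For $\nu > 1$ we are in Case~1 in dimension $q+1$, since $\tilde\psi(0) = 0$, $\tilde\psi$ is strictly increasing, and $\tilde\psi(\infty) = \nu + q > q + 1$; the integrability condition \eqref{eq:Finite moments} holds trivially as $\tilde\psi$ is bounded.

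\textbf{Translating the subspace condition.} For any linear subspace $\mathbb{W}\subset \R^{q+1}$, $\tilde{Q}(\M(\mathbb{W})) = \Pr(y \in \mathbb{W})$. If $\mathbb{W}\subset \R^q\times\{0\}$ this probability is zero; otherwise the set $\{x \in \R^q : (x,1) \in \mathbb{W}\}$ is of the form $a + \mathbb{V}$ for some $a\in\R^q$ and a linear subspace $\mathbb{V}\subset\R^q$ with $\dim\mathbb{V} = \dim\mathbb{W} - 1$, and $\Pr(y\in\mathbb{W}) = P(a+\mathbb{V})$. In both Cases the threshold in Theorem~\ref{thm:Uniqueness} evaluates to $(\nu - 1 + \dim\mathbb{W})/(\nu+q) = (\nu + \dim\mathbb{V})/(\nu+q)$, and the cases $\mathbb{W}\subset\R^q\times\{0\}$ are automatic. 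Hence the subspace condition of Theorem~\ref{thm:Uniqueness} applied to $\tilde{Q}$ is equivalent to \eqref{eq:ExistenceMVT}. This gives the ``iff'' in both parts of the theorem at the level of $\tilde{L}$-minimization.

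\textbf{From $\tilde{L}$-minimizers to the stated conclusion.} For $\nu = 1$, Theorem~\ref{thm:Uniqueness} yields a unique minimizer of $\tilde{L}(\cdot,\tilde{P})$ with $\det\Gamma = 1$; by the scaling invariance $L_{\tilde\rho}(c\Gamma,\tilde{Q}) = L_{\tilde\rho}(\Gamma,\tilde{Q})$ of Case~0, the full minimizing set is a one-parameter family $\{c\Gamma^* : c > 0\}$. Among these, exactly one satisfies $\Gamma_{q+1,q+1} = 1$, namely $\tilde\Gamma / \tilde\Gamma_{q+1,q+1}$ for any minimizer $\tilde\Gamma$ (with $\tilde\Gamma_{q+1,q+1} > 0$ by positive definiteness). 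For $\nu > 1$, Theorem~\ref{thm:Uniqueness} already gives a unique minimizer $\Gamma$; the remaining claim is that $\Gamma_{q+1,q+1} = 1$ holds automatically.

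\textbf{Automatic $\Gamma_{q+1,q+1} = 1$ for $\nu > 1$; main obstacle.} Evaluate the fixed-point equation $\Psi_{\tilde\rho}(\Gamma,\tilde{Q}) = \Gamma$, i.e.\
\[
    \Gamma \;=\; \int \tilde\rho'(y^\top\Gamma^{-1}y)\, yy^\top\, \tilde{P}(dy),
\]
in two ways. Reading off the $(q+1,q+1)$ entry and using $y_{q+1} \equiv 1$ gives $\Gamma_{q+1,q+1} = \int \tilde\rho'(y^\top\Gamma^{-1}y)\,\tilde{P}(dy)$. Contracting with $\Gamma^{-1}$ gives $q+1 = \tr(I_{q+1}) = \int \tilde\psi(y^\top\Gamma^{-1}y)\,\tilde{P}(dy)$. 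The algebraic identity $\tilde\psi(s) + (\nu-1)\tilde\rho'(s) \equiv \nu + q$, which holds specifically because $\tilde\rho(s) = (\nu+q)\log(\nu - 1 + s)$, then combines the two displays into $q+1 = (\nu+q) - (\nu-1)\Gamma_{q+1,q+1}$, yielding $\Gamma_{q+1,q+1} = 1$. This last step is the main obstacle: the other parts are essentially bookkeeping to reduce the location-scatter problem to the scatter-only theorem, whereas the $\Gamma_{q+1,q+1}=1$ claim hinges on an identity that is not visible at the abstract level and requires exploiting the special $t$-family form of~$\tilde\rho$.
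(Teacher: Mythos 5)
Your proposal is correct and follows essentially the same route as the paper: reduce to Theorem~\ref{thm:Uniqueness} for $Q = \LL(y(X)y(X)^\top)$ in dimension $q+1$ (Case~0 for $\nu=1$, Case~1 for $\nu>1$ with $\psi(\infty)=\nu+q$), translate the subspace condition via the correspondence $\mathbb{W}\cap\{y_{q+1}=1\}\leftrightarrow a+\mathbb{V}$, and derive $\Gamma_{q+1,q+1}=1$ from the fixed-point equation. Your identity $\tilde\psi(s)+(\nu-1)\tilde\rho'(s)\equiv\nu+q$ is exactly the algebraic content of the paper's trace computation, just stated more explicitly.
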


Consequently, Condition \eqref{eq:ExistenceMVT} is both necessary and
sufficient for $L(\cdot,\cdot,P)$ to have a unique minimizer $(\bmu
(P),\bSigma(P))$. In that case, we have to minimize $\tilde{L}(\cdot
,\tilde{P})$, which is equivalent to finding a solution $\Gamma\in\R
^{(q+1)\times(q+1)}_{{\rm sym},>0}$ of the fixed point equation
\[
\Gamma
\ = \ \int\rho'(\|y\|^2-1) \, yy^\top\, \tilde{P}(dy)
= \int\rho'(\|x\|^2) y(x)y(x)^\top\, P(dx).
\]
If we write such a matrix $\Gamma$ as
\[
\Gamma\ = \
\begin{bmatrix}
A & b \\
b^\top& c
\end{bmatrix}
\]
with $A \in\Rqqsym$, $b \in\R^q$ and $c = \Gamma_{q+1,q+1} > 0$, then
\[
\bmu(P) \ = \ c^{-1} b
\quad\text{and}\quad
\bSigma(P) \ = \ c^{-1} A - \bmu(P)\bmu(P)^\top.
\]
Moreover, $c = 1$ in case of $\nu> 1$.

\subsection{Weak differentiability and linear expansions}

The results for weak continuity and differentiability of scatter-only
functionals imply analogous results for the location-scatter problem.
Let $(P_n)_n$ be a sequence of probability distributions on $\R^q$
converging weakly to a distribution $P$ such that $(\bmu(P),\bSigma
(P))$ is well-defined. Then for sufficiently large $n$, $(\bmu
(P_n),\bSigma(P_n))$ is well-defined, too, and
\[
(\bmu(P_n),\bSigma(P_n)) \ \to\ (\bmu(P),\bSigma(P)).
\]
(Again asymptotic statements are meant as $n \to\infty$.) This
follows from Theorem~\ref{thm:Continuity}, applied to $Q_{(n)} := \LL
\bigl( y(X)y(X)^\top\bigr)$, $X \sim P_{(n)}$. Theorem~\ref
{thm:Diffability} yields the following expansion:

\begin{Theorem}
\label{thm:WeakDiffability.t}
Let $P$ be a probability distribution on $\R^q$ such that $\bmu(P) =
0$ and $\bSigma(P) = I_q$. Then there exists a bounded and continuous function
\[
\tilde{Z} : \R^q \to\R^{(q+1)\times(q+1)}_{{\rm sym}}\vadjust{\goodbreak}
\]
depending only on $P$ such that $\int\tilde{Z} \, dP = 0$ with the
following property: Let $\hat{P}_1$, $\hat{P}_2$, $\hat{P}_3$,
\ldots be random distributions on $\R^q$ such that for any bounded and
continuous function $f : \R^q \to\R$,
\[
\int f \, d\hat{P}_n \ \to_p \ \int f \, dP.
\]
Then $\bigl( \bmu(\hat{P}_n), \bSigma(\hat{P}_n) \bigr)$ is
well-defined with asymptotic probability one, and
\[
\begin{bmatrix}
\bSigma(\hat{P}_n) - I_q & \bmu(\hat{P}_n) \\
\bmu(\hat{P}_n)^\top& 0
\end{bmatrix}
\ = \ \int\bigl( \tilde{Z} - \tilde{Z}_{q+1,q+1} I_{q+1} \bigr) \,
d\hat{P}_n
+ o_p \Bigl( \Bigl\| \int\tilde{Z} \, d\hat{P}_n \Bigr\| \Bigr).
\]

The precise definition of $\tilde{Z}$ is
\[
\tilde{Z}(x) \ := \ \tilde{H}(P)^{-1} \bigl( \rho'(\|x\|^2) y(x)
y(x)^\top- I_{q+1} \bigr),
\]
where $\tilde{H}(P) : \tilde{\mathbb{M}} \to\tilde{\mathbb{M}}$
is the linear operator given by
\[
\tilde{H}(P) M
\ := \ M + \int\rho''(\|x\|^2) \, y(x)^\top M y(x) \, y(x)y(x)^\top
\, P(dx)
\]
for matrices $M$ in
\[
\tilde{\mathbb{M}} \ := \
\begin{cases}
\bigl\{ M \in\R^{(q+1)\times(q+1)}_{\rm sym} : \tr(M) = 0 \bigr\}
& \text{if} \ \nu= 1, \\
\R^{(q+1)\times(q+1)}_{\rm sym}
& \text{if} \ \nu> 1.
\end{cases}
\]
Moreover, in case of $\nu> 1$,
\[
\tilde{Z}_{q+1,q+1} \ \equiv\ 0.
\]
\end{Theorem}

\begin{Remark}[Empirical distributions]
\label{rem:WeakDiffability.t}
Let $P_1, P_2, P_3, \ldots$ and $P$ be distributions on $\R^q$ such
that $P_n \to_w P$ and $\bmu(P) = 0 = \bmu(P_n)$ and $\bSigma(P) =
I_q = \bSigma(P_n)$ for all $n$. Further let $\hat{P}_n$ be the
empirical distribution of independent random vectors $X_{n1}, X_{n2},
\ldots, X_{nn}$ with distribution $P_n$. As in the proof of
Theorem~\ref{thm:Consistency S12} one can show that these random
distributions $\hat{P}_n$ satisfy the assumptions of Theorem~\ref
{thm:WeakDiffability.t}. This implies that $(\bmu(\hat{P}_n), \bSigma
(\hat{P}_n))$ is well-defined with asymptotic probability one, and
\[
\sqrt{n}
\begin{bmatrix}
\bSigma(\hat{P}_n) - I_q & \bmu(\hat{P}_n) \\
\bmu(\hat{P}_n)^\top& 0
\end{bmatrix}
\ = \ \frac{1}{\sqrt{n}} \sum_{i=1}^n
\bigl( \tilde{Z}(X_{ni}) - \tilde{Z}(X_{ni})_{q+1,q+1} I_q \bigr)
+ o_p(1)
\]
with $\tilde{Z} : \R^q \to\R^{(q+1)\times(q+1)}_{{\rm sym}}$ as in
Theorem~\ref{thm:WeakDiffability.t}. In particular, $\Ex\tilde
{Z}(X_{n1}) = 0$ for all $n$, and the random matrix in the previous
display converges in distribution to a random matrix with a centered
Gaussian distribution on $\R^{(q+1)\times(q+1)}_{{\rm sym}}$.
\end{Remark}

\begin{Remark}[Symmetry]
\label{rem:WeakDiffability.t.R1}
Suppose that $P$ is symmetric in the sense that $\LL(-X) = \LL(X)$
for $X \sim P$. Then the function $\tilde{Z}$ in Theorem~\ref
{thm:WeakDiffability.t} may be written as
\[
\tilde{Z}(x) \ = \
\begin{bmatrix} Z(xx^\top) & 0 \\ 0 & z(\|x\|^2)
\end{bmatrix}
+
\rho'(\|x\|^2)
\begin{bmatrix} 0 & Bx \\ x^\top B & 0
\end{bmatrix}\vadjust{\goodbreak}
\]
with bounded and continuous functions $Z : \Rqqsympsd\to\Rqqsym$, $z
: [0,\infty) \to\R$ and a nonsingular matrix $B \in\Rqqsym$. In
particular, the random variables $\sqrt{n}(\bSigma(\hat{P}_n) -
I_q)$ and $\sqrt{n} \bmu(\hat{P}_n)$ in Remark~\ref
{rem:WeakDiffability.t} are asymptotically independent.
\end{Remark}

\begin{Remark}[Spherical symmetry]
\label{rem:WeakDiffability.t.R2}
Suppose that $P$ is spherically symmetric around $0$. Let $\beta=
\beta(P,\nu)$, $A_0(\cdot)$ and $a(\cdot)$ be defined as in
Remark~\ref{rem:CLT1.R2}. Then the function $\tilde{Z} - \tilde
{Z}_{q+1,q+1} I_{q+1}$ in Theorem~\ref{thm:WeakDiffability.t} may be
written as follows:
\[
\tilde{Z}(x) - \tilde{Z}(x)_{q+1,q+1} I_{q+1}
\ = \ (\nu+ \|x\|^2)^{-1}
\begin{bmatrix}
c_0 A_0(x) + c_1 a(x) I_q & c_2 x \\
c_2 x^\top& 0
\end{bmatrix}
\]
where
\[
c_0 \ := \ \frac{(q + \nu)(q + 2)}{q + 2(1 - \beta)\nu/q}, \quad
c_1 \ := \ \frac{q}{1 - \beta}
\quad\text{and}\quad
c_2 \ := \ \frac{q}{q - 2(1 - \beta)}.
\]
Comparing this with Remark~\ref{rem:CLT1.R2}, we see that the
estimator $\bSigma(\hat{P}_n)$ has the same asymptotic behaviour as
the corresponding estimator in the scatter-only problem.
\end{Remark}


\section{Auxiliary results and proofs}
\label{sec:Proofs}

\subsection{Proofs for Section~\ref{sec:Equivariance}}

\begin{proof}[\bf Proof of Lemma~\ref{lem: linear equivariance}]
Note that $(X_{\pi(i)})_{i=1}^q = BX$ with the permutation matrix $B =
(1_{[\pi(i) = j]})_{i,j=1}^q$. Thus our assumption on $X$ in part~(i)
and linear equivariance of $\bSigma(\cdot)$ imply that
\[
\bSigma(P) \ = \ B \bSigma(P) B^\top
\ = \ \bigl( \bSigma(P)_{\pi(i),\pi(j)} \bigr)_{i,j=1}^q
\]
for any permutation $\pi$ of $\{1,2,\ldots,q\}$ such that $\pi(i) =
i$ whenever $i \not\in J$. Let $j_1 := \min(J)$ and $j_2 := \max
(J)$. For arbitrary indices $j \ne k$ in $J$, choose $\pi$ such that
$\pi(j_1) = j$ and $\pi(j_2) = k$. Then we realize that $\bSigma
(P)_{j,j} = a(P) := \bSigma(P)_{j_1,j_1}$ and $\bSigma(P)_{j,k} =
b(P) := \bSigma(P)_{j_1,j_2}$. This proves part~(i).

To verify part~(ii) we write $(s_i X_i)_{i=1}^q = BX$ with $B := \diag
(s)$. Then
\[
\bSigma(P) \ = \ B \bSigma(P) B^\top
\ = \ \bigl( s_i s_j \bSigma(P)_{i,j} \bigr)_{i,j=1}^q.
\]
Consequently, $\bSigma(P)_{ij} = 0$ whenever $s_is_j = -1$, i.e.\ $s_i
\ne s_j$.

As for part~(iii), suppose first that $P$ is spherically symmetric.
This implies that $X \sim P$ satisfies the assumptions of part~(i) with
the full index set $J = \{1,2,\ldots,q\}$ and of part~(ii) for any
sign vector $s \in\{-1,1\}^q$. Hence $\bSigma(P) = c(P) I_q$ for some
$c(P) \ge0$. Now suppose that $P$ is elliptically symmetric with
center $0$ and scatter matrix $\Sigma\in\Rqqsympd$. Then the
distribution $P'$ of $X' := \Sigma^{-1/2} X$ is spherically symmetric,
and $P = {P'}^B$ with $B := \Sigma^{1/2}$. Thus $\bSigma(P) = B
\bSigma(P') B^\top= c(P') \Sigma$.
\end{proof}

\begin{proof}[\bf Proof of Lemma~\ref{lem: affine equivariance}]
Under the assumption of part~(i),
\[
\bmu(P) \ = \ \diag(s) \bmu(P)
\ = \ \bigl( s_i \bmu(P)_i \bigr)_{i=1}^q.
\]
Consequently, $\bmu(P)_i = 0$ whenever $s_i = -1$.\vadjust{\goodbreak}

If $P$ is elliptically symmetric with center $\mu$ and scatter matrix
$\Sigma$, then the distribution $P'$ of $X' := \Sigma^{-1/2}(X - \mu
)$ is spherically symmetric, and $P = {P'}^{\mu,B}$ with $B := \Sigma
^{1/2}$. But $X'$ satisfies the assumptions of part~(i) for any sign
vector $s \in\{-1,1\}^q$. Hence $\bmu(P') = 0$, and $\bmu(P) = \mu+
B \bmu(P') = \mu$. Moreover, $\bSigma(P) = B \bSigma(P') B^\top=
c(P') \Sigma$, according to Lemma~\ref{lem: linear equivariance},
applied to $P'$.
\end{proof}


\subsection{Proofs for Section~\ref{sec:Scatter}}

\begin{proof}[\bf Proof of Lemma~\ref{lem:Trace inequalities}]
Let $M = \sum_{i=1}^q \lambda_i(M) u_i^{} u_i^\top$ with eigenvalues
$\lambda_i(M) \ge0$ and an orthonormal basis $u_1$, $u_2$, \ldots,
$u_q$ of $\R^q$. Then $\tr(M) = \sum_{i=1}^q \lambda_i(M)$ and
\[
\tr(AM)
\ = \ \sum_{i=1}^q \lambda_i(M) u_i^\top A u_i^{}
\
\begin{cases}
\le\ \lambda_{\rm max}(A) \sum_{i=1}^q \lambda_i(M)
\ = \ \lambda_{\rm max}(A) \tr(M), \\
\ge\ \lambda_{\rm min}(A) \sum_{i=1}^q \lambda_i(M)
\ = \ \lambda_{\rm min}(A) \tr(M).
\end{cases}
\]
\\[-5ex]
\end{proof}

\begin{proof}[\bf Proof of Lemma~\ref{lem:Expansion of rho}]
For fixed $s>0$ and $x \in\R$ define $f(x) := \rho(e^x s)$. Then
$f'(x) = \rho'(e^x s) e^x s = \psi(e^x s)$. Consequently by the mean
value theorem,
\[
\rho(t) - \rho(s) \ = \ f(\log(t/s)) - f(0)
\ = \ f'(\xi) \log(t/s)
\ = \ \psi(e^\xi s) \log(t/s)
\]
with some number $\xi$ between $0$ and $\log(t/s)$. Since $\psi$ is
non-decreasing on $(0,\infty)$, either $\log(t/s) > 0$ and $\psi(s)
\le\psi(e^\xi s) \le\psi(t)$, or $\log(t/s) < 0$ and $\psi(t) \le
\psi(e^\xi s) \le\psi(s)$. In both cases, $\psi(s) \log(t/s) \le
\rho(t) - \rho(s) \le\psi(t) \log(t/s)$.

Note also that
\[
\rho(t) - \rho(s) \ = \ \rho'(\xi) (t - s)
\]
for some $\xi$ between $a$ and $b$. Hence if $\rho'$ is
non-increasing, the asserted inequalities follow from the fact that
either $t - s \ge0$ and $\rho'(t) \le\rho'(\xi) \le\rho'(s)$, or
$t - s < 0$ and $\rho'(s) \le\rho'(\xi) \le\rho'(t)$.
\end{proof}

\begin{proof}[\bf Proof of Lemma~\ref{lem:Example Q 1}]
It follows from \eqref{eq:PV=0} that
\[
\Pr\bigl( X_1, X_2, \ldots, X_k \ \text{are linearly independent}
\bigr)
\ = \ 1
\quad\text{for} \ k = 1, 2, \ldots, q.
\]
Indeed, $\Pr(X_1 \ne0) = 1$, and for $2 \le k \le q$,
\[
\Pr\bigl( X_k \not\in\mathrm{span}(X_1,\ldots,X_{k-1}) \,\big|\,
X_1,\ldots,X_{k-1} \bigr)
\ = \ 1.
\]
This implies that with probability one,
\[
\hat{Q}^1(\M(\V)) = \hat{P}(\V) \ \le\ \frac{\dim(\V)}{n}
\quad\text{for all} \ \V\in\VV_q \ \text{with} \ \dim(\V) < q.
\]
Consequently, according to Theorem~\ref{thm:Uniqueness}, $\bSigma
_\rho(\hat{Q}^1)$ is well-defined with probability one, provided that
\[
\frac{d}{n} \ < \
\begin{cases}
\displaystyle
\frac{d}{q} \quad\text{for} \ 1 \le d < q, & \text{in Case 0}, \\[2ex]
\displaystyle
\frac{\psi(\infty) - q + d}{\psi(\infty)} \quad\text{for} \ 0
\le d < q, & \text{in Case 1}.
\end{cases}
\]
But this can be shown to be equivalent to $n \ge q+1$ in Case~0 and $n
\ge q$ in Case~1.
\end{proof}

To understand setting \eqref{eq:Example Q 2} thoroughly, the following
two results about linear subspaces of $\R^q$ and sample covariance
matrices are useful:

\begin{Lemma}
\label{lem:Sample covariances}
For arbitrary integers $k \ge1$ and points $x_1, x_2, \ldots, x_k \in
\R^q$ with sample mean $\bar{x} = k^{-1} \sum_{i=1}^k x_i$,
\begin{align*}
\W(x_1,x_2,\ldots,x_k) \
&:= \ \mathrm{span}(x_i - x_j \,: \, i,j = 1,2,\ldots,k) \\
&\,= \ \mathrm{span}(x_1 - \bar{x}, x_2 - \bar{x}, \ldots, x_k -
\bar{x}) \\
&\,= \ \mathrm{span}(x_1 - x_a, x_2 - x_a, \ldots, x_k - x_a)
\end{align*}
for any $a \in\{1,2,\ldots,k\}$. Moreover, in case of $k \ge2$,
\[
S(x_1, x_2, \ldots, x_k) \, \R^q \ = \ \W(x_1, x_2, \ldots, x_k).
\]
\end{Lemma}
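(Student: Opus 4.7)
The plan is straightforward: first establish the three stated equalities of spans by elementary linear-combination identities, and then identify $S(x_1,\ldots,x_k)\R^q$ with $\W(x_1,\ldots,x_k)$ by exploiting the symmetry (in fact positive semi-definiteness) of $S$.

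For the equalities of spans, I would fix an arbitrary $a \in \{1,\ldots,k\}$ and observe that every generator $x_i - x_a$ of the third span has the form $x_i - x_j$, while conversely $x_i - x_j = (x_i - x_a) - (x_j - x_a)$. This shows that the first and the third spans coincide and are independent of the choice of $a$. For the middle span, I would use the identity $x_i - \bar{x} = k^{-1} \sum_{j=1}^k (x_i - x_j)$, which places each $x_i - \bar{x}$ in $\W(x_1,\ldots,x_k)$, together with $x_i - x_j = (x_i - \bar{x}) - (x_j - \bar{x})$ for the reverse inclusion.

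For the image statement, write $S := S(x_1,\ldots,x_k)$ and note that by definition, for every $v \in \R^q$,
\[
    Sv \ = \ \frac{1}{k-1} \sum_{i=1}^k \langle x_i - \bar{x}, v\rangle (x_i - \bar{x}) ,
\]
which manifestly lies in $\mathrm{span}(x_i - \bar{x} : 1 \le i \le k) = \W(x_1,\ldots,x_k)$. Hence $S\R^q \subseteq \W(x_1,\ldots,x_k)$. For the reverse inclusion I would invoke the symmetry of $S$: since $S \in \Rqqsympsd$, one has the orthogonal decomposition $\R^q = \ker(S) \oplus \mathrm{Im}(S)$ and $(\ker S)^\perp = S\R^q$. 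It therefore suffices to show $\ker(S) \subseteq \W(x_1,\ldots,x_k)^\perp$. But if $Sv = 0$, then $0 = v^\top S v = (k-1)^{-1} \sum_i \langle x_i - \bar{x}, v\rangle^2$, which forces $\langle v, x_i - \bar{x}\rangle = 0$ for every $i$; hence $v$ is orthogonal to $\W(x_1,\ldots,x_k)$.

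The argument is entirely routine linear algebra, so I do not anticipate any genuine obstacle. The only mildly conceptual step is the use of the positive semi-definiteness of $S$ in the very last paragraph, which turns $Sv = 0$ into a sum of squares and thereby into pointwise orthogonality; this is exactly what couples $\ker(S)$ to $\W(x_1,\ldots,x_k)^\perp$ and closes the inclusion loop.
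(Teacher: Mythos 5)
Your proposal is correct and follows essentially the same route as the paper: elementary linear-combination identities for the three span equalities, and positive semi-definiteness of $S$ to convert $v^\top S v = 0$ into the orthogonality of $v$ to each $x_i - \bar{x}$, which identifies $S\,\R^q$ with $\W(x_1,\ldots,x_k)$ via orthogonal complements. The only cosmetic difference is that you verify the inclusion $S\R^q \subseteq \W$ directly from the formula for $Sv$, whereas the paper runs both directions through the single equivalence $w \perp S\R^q \iff w^\top S w = 0 \iff w \perp \W$.
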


\begin{Corollary}
\label{cor:Sample covariances}
Let $x_1, x_2, \ldots, x_k$ and $y_1, y_2, \ldots, y_\ell$ be
arbitrary point in $\R^q$. Suppose that both $\W(x_1, x_2, \ldots,
x_k)$ and $\W(y_1, y_2, \ldots, y_\ell)$ are contained in a given
space $\V\in\VV_q$. If $\{x_1, x_2, \ldots, x_k\}$ and $\{y_1, y_2,
\ldots, y_\ell\}$ have at least one point in common, then
\[
\W(x_1, x_2, \ldots, x_m, \, y_1, y_2, \ldots, y_\ell) \ \subset\
\V.
\]
\end{Corollary}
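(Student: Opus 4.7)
The plan is to reduce to Lemma~\ref{lem:Sample covariances}, whose third characterization allows us to express the span of pairwise differences using an arbitrary point from the set as a common anchor. The key observation is that if the two point sets share a common point, then that common point can be used simultaneously as the anchor for both sets, so all the generating differences of the combined span live in $\V$.

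More concretely, first I would pick a common point $z \in \{x_1, \ldots, x_k\} \cap \{y_1, \ldots, y_\ell\}$, say $z = x_a = y_b$ for some indices $a, b$. Then Lemma~\ref{lem:Sample covariances} gives
\[
    \W(x_1, \ldots, x_k) \ = \ \mathrm{span}(x_1 - z, \ldots, x_k - z)
\]
and similarly
\[
    \W(y_1, \ldots, y_\ell) \ = \ \mathrm{span}(y_1 - z, \ldots, y_\ell - z).
\]
By assumption both of these subspaces are contained in $\V$, so in particular each vector $x_i - z$ and each vector $y_j - z$ lies in $\V$.

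Now I would apply Lemma~\ref{lem:Sample covariances} a third time, this time to the combined list $x_1, \ldots, x_k, y_1, \ldots, y_\ell$, again with the anchor $z$ (which belongs to this combined list). This gives
\[
    \W(x_1, \ldots, x_k, y_1, \ldots, y_\ell)
    \ = \ \mathrm{span}(x_i - z, \, y_j - z \,:\, 1\le i \le k, \, 1 \le j \le \ell),
\]
and the generators on the right-hand side all lie in $\V$, so the span is contained in $\V$.

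There is no real obstacle here: the whole corollary is essentially a bookkeeping consequence of the anchor-freedom already proved in Lemma~\ref{lem:Sample covariances}. The only thing to be careful about is that Lemma~\ref{lem:Sample covariances} requires the anchor to be one of the listed points, which is exactly why the hypothesis that the two sets share at least one common point is needed.
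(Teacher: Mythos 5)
Your proof is correct and is exactly the argument the paper intends: the corollary is stated as an immediate consequence of Lemma~\ref{lem:Sample covariances}, using the common point as a single anchor for both lists and for their union. No gaps.
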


\begin{proof}[\bf Proof of Lemma~\ref{lem:Sample covariances}]
For arbitrary indices $a, j \in\{1,2,\ldots,k\}$ we may write $x_j -
\bar{x} = (x_j - x_a) - k^{-1} \sum_{i=1}^k (x_i - x_a)$, so
\begin{align*}
\mathrm{span}(
&x_1 - \bar{x}, x_2 - \bar{x}, \ldots, x_k - \bar{x}) \\
&\subset\ \mathrm{span}(x_1 - x_a, x_2 - x_a, \ldots, x_k - x_a) \\
&\subset\ \mathrm{span}(x_i - x_j \,:\, i,j = 1,2,\ldots,k) \\
&= \ \mathrm{span} \bigl( (x_i - \bar{x}) - (x_j - \bar{x}) \,:\,
i,j = 1,2,\ldots,k) \\
&\subset\ \mathrm{span}(x_1 - \bar{x}, x_2 - \bar{x}, \ldots, x_k
- \bar{x}).
\end{align*}
Hence the preceding three inclusions are equalities.

Now suppose that $k \ge2$. Since $S := S(x_1,x_2,\ldots,x_k)$ is
positive semidefinite, it follows from its spectral representation that
a vector $w \in\R^q$ is perpendicular to the column space $S \, \R
^q$ if, and only if,
\[
0 \ = \ w^\top S w
\ = \ (k - 1)^{-1} \sum_{i=1}^k (w^\top(x_i - \bar{x}))^2,
\]
i.e.\ $w$ is perpendicular to $\mathrm{span}(x_1 - \bar{x}, x_2 -
\bar{x}, \ldots, x_k - \bar{x}) = \W(x_1,x_2,\ldots,x_k)$. Hence
the column space of $S$ is equal to $\W(x_1,x_2,\ldots,x_k)$.
\end{proof}

\begin{proof}[\bf Proof of Lemma~\ref{lem:Example Q 2}]
For any nonvoid index set $M \subset\{1,2,\ldots,n\}$ define $\W(M)
:= \W(X_i : i \in M)$; in particular, $\W(\{i\}) = \{0\}$. Then it
follows from Lemma~\ref{lem:Sample covariances} that for any $\V\in
\VV_q$,
\[
\hat{Q}^k(\M(\V))
\ = \ {\binom{n}{k}}^{-1} \sum_{J \in\mathcal{J}_k}
1_{[\W(J) \subset\V]},
\]
where $\mathcal{J}_k$ stands for the set of all subsets of $\{
1,2,\ldots,n\}$ with $k$ elements. Moreover, Corollary~\ref
{cor:Sample covariances} implies that for two nonvoid index sets $M,M'$,
\[
\W(M \cup M') \ \subset\V
\quad\text{if} \ \ \W(M) \subset\V, \W(M') \subset\V\ \text
{and} \ M \cap M' \ne\emptyset.
\]
Consequently, if we partition $\{1,2,\ldots,n\}$ into pairwise
disjoint and maximal subsets $M_1, M_2, \ldots, M_L$ such that $\W
(M_\ell) \subset\V$ for $\ell= 1, 2, \ldots, L$, then
\[
\hat{Q}^k(\M(\V))
\ = \ {\binom{n}{k}}^{-1} \sum_{\ell=1}^L \binom{\# M_\ell}{k}
\]
with the usual convention that $\binom{a}{k} := 0$ for integers $0 \le
a < k$.

For any fixed index set $M$ with $1 \le\# M \le q$ and an additional
index $j \not\in M$, it follows from \eqref{eq:PH=0} and Lemma~\ref
{lem:Sample covariances} that
\begin{align*}
\Pr\bigl( \W(M \cup\{j\}) \ne\W(M) \,\big|\, (X_i)_{i \ne j}
\bigr) \
&= \ \Pr\bigl( X_j - X_a \not\in\W(M) \,\big|\, (X_i)_{i \ne j}
\bigr) \\
&= \ 1,
\end{align*}
where $a$ is any index in $M$. This implies that with probability one,
for any given partition $M_1, M_2, \ldots, M_L$ of $\{1,2,\ldots,n\}$
into nonvoid subsets $M_\ell$,
\[
\dim\Bigl( \bigcup_{\ell=1}^L \W(M_\ell) \Bigr)
\ = \ \min\Bigl( \sum_{\ell=1}^L (\# M_\ell- 1), q \Bigr).
\]
In particular, for any $\V\in\VV_q$ with $d := \dim(\V) < q$, the
value of $\hat{Q}^k(\M(\V))$ is no larger than the maximum of
\begin{equation}
\label{eq:Strange}
{\binom{n}{k}}^{-1} \sum_{\ell=1}^L \binom{m_\ell+1}{k}
\end{equation}
over all integers $L \ge1$ and $m_1, m_2, \ldots, m_L \ge k-1$ such
that $\sum_{\ell=1} m_\ell\le d$. It will be shown later that this
maximum equals
\[
{\binom{n}{k}}^{-1} \binom{d+1}{k}.
\]
Since $(\psi(\infty) - q + d)/\psi(\infty) = 1 - (q - d)/\psi
(\infty) > 1 - (q - d)/q = d/q$ in Case~1, we conclude that $\bSigma
_\rho(\hat{Q}^k)$ is well-defined almost surely, provided that
\[
{\binom{n}{k}}^{-1} \binom{d+1}{k} \ < \ \frac{d}{q}
\quad\text{for} \ k-1 \le d < q.
\]
Since $\binom{d+1}{k} \big/ d$ is increasing in $d \ge k-1$, this
condition is equivalent to
\[
{\binom{n}{k}}^{-1} \binom{q}{k} \ < \ \frac{q-1}{q}.
\]
But this holds in case of $n \ge q+1$, since the left hand side equals
\[
{\binom{n}{k}}^{-1} \binom{q}{k}
\ \le\ {\binom{q+1}{k}}^{-1} \binom{q}{k}
\ = \ \frac{q-k+1}{q+1}
\ \le\ \frac{q-1}{q+1}
\ < \ \frac{q-1}{q}.\vadjust{\goodbreak}
\]

It remains to be shown that the sum $\sum_{\ell=1}^L \binom{m_\ell
+1}{k}$ in \eqref{eq:Strange} is not larger than $\binom{d+1}{k}$.
For this purpose, let $N_1, N_2, \ldots, N_L$ be disjoint subsets of
$\{1,2,\ldots,d\}$ with $\# N_\ell= m_\ell$, and let $M_\ell:=
N_\ell\cup\{d+1\}$. Then for $\ell, \ell' \in\{1,2,\ldots,L\}$
with $\ell\ne\ell'$, a subset of $M_\ell$ with $k$ elements is
different from any subset of $M_{\ell'}$ with $k$ elements.
Consequently,\vspace*{-3pt}
\begin{align*}
\sum_{\ell=1}^L \binom{m_\ell+1}{k} \
&= \ \sum_{\ell=1}^L \# \bigl\{ \text{subsets of $M_\ell$ with $k$
elements} \bigr\} \\
&\le\ \# \bigl\{ \text{subsets of $\{1,2,\ldots,d+1\}$ with $k$
elements} \bigr\} \\
&= \ \binom{d+1}{k}.
\end{align*}
\\[-6.5ex]
\end{proof}

\begin{proof}[\bf Proof of Theorem~\ref{thm:Uniqueness}]
The first part, i.e.\ the equivalence of the fixed-point equation $\Psi
_\rho(\Sigma,Q) = \Sigma$ and $\Sigma$ being a minimizer of $L_\rho
(\cdot,Q)$, follows from Propositions~\ref{prop:Expansion 1} and \ref
{prop:Convexity}: Recall that with $B := \Sigma^{1/2}$ we may write
\begin{align*}
L_\rho(\Sigma^{1/2} \exp(A) \Sigma^{1/2}, Q) - L_\rho(\Sigma,Q) \
&= \ L_\rho(\exp(A),Q_B) \\
&= \ \bigl\langle A, G_\rho(Q_B) \bigr\rangle
+ o(\|A\|)
\end{align*}
as $\Rqqsym\ni A \to0$, and
\[
G_\rho(Q_B)
\ = \ B^{-1} (\Sigma- \Psi_\rho(\Sigma,Q)) B^{-1}.
\]
If $\Sigma$ minimizes $L_\rho(\cdot,Q)$, then $G_\rho(Q_B) = 0$,
which is equivalent to $\Psi_\rho(\Sigma,Q) = \Sigma$. On the other
hand, if $\Sigma$ is not a minimizer of $L_\rho(\cdot,Q)$, then
there exists a matrix $A \in\Rqqsym$ such that $L_\rho(\exp(A),
Q_B) < 0$. But convexity of $\R\ni t \mapsto h(t) := L_\rho(\exp
(tA), Q_B)$ implies that
\[
0 \ > \ L_\rho(\exp(A),Q_B)
\ = \ h(1) - h(0)
\ \ge\ h'(0)
\ = \ \langle A, G_\rho(Q_B)\rangle,
\]
i.e.\ $G_\rho(Q_B) \ne0$ and thus $\Psi_\rho(\Sigma,Q) \ne\Sigma$.

In Case~1, suppose that Condition~1 holds true. According to
Proposition~\ref{prop:Coercivity}, $L(\cdot,Q)$ is a continuous
function on $\Rqqsympd$ which is coercive in that $L_\rho(\Sigma,Q)
\to\infty$ as $\|\log(\Sigma)\| \to\infty$. Consequently there
exists a minimizer $\Sigma_o$ of $L_\rho(\cdot,Q)$. But Condition~1
and Proposition~\ref{prop:Convexity} imply that $L_\rho(\Sigma
_o^{1/2} \exp(tA) \Sigma_o^{1/2}, Q)$ is strictly convex for any $A
\in\Rqqsym\setminus\{0\}$. Consequently, $\Sigma_o$ is the unique
minimizer of $L_\rho(\cdot,Q)$.

Still in Case~1, suppose that $\Sigma_o \in\Rqqsympd$ is a unique
minimizer of $L_\rho(\cdot,Q)$. Then $L_\rho(\Sigma_o^{1/2} \exp
(A) \Sigma_o^{1/2}, Q)$ is a coercive function of $A \in\Rqqsym$:
For if $\|A\| \ge1$ and $A' := \|A\|^{-1} A$, then by Proposition~\ref
{prop:Convexity},
\begin{align*}
L_\rho(
&\Sigma_o^{1/2} \exp(A) \Sigma_o^{1/2}, Q) - L_\rho(\Sigma_o,Q) \\
&= \ L_\rho(\Sigma_o^{1/2} \exp(\|A\| A') \Sigma_o^{1/2}, Q)
- L_\rho(\Sigma_o,Q) \\
&\ge\ \|A\|
\bigl( L_\rho(\Sigma_o^{1/2} \exp(A') \Sigma_o^{1/2}, Q)
- L_\rho(\Sigma_o,Q) \bigr) \\
&\ge\ \|A\| \, \min_{A'' \in\Rqqsym\,:\, \|A''\| = 1}
\bigl( L_\rho(\Sigma_o^{1/2} \exp(A'') \Sigma_o^{1/2}, Q)
- L_\rho(\Sigma_o,Q) \bigr),\vadjust{\vspace*{-6pt}\goodbreak}
\end{align*}
and the minimum on the right hand side is strictly positive by
uniqueness of the minimizer $\Sigma_o$. But coercivity of $L_\rho
(\Sigma_o^{1/2} \exp(\cdot) \Sigma_o^{1/2},Q)$ is equivalent to
Condition~1, according to Proposition~\ref{prop:Coercivity}.

In Case~0 one can argue in the same way, this time with $\{\Sigma\in
\Rqqsympd: \det(\Sigma) = 1\}$ and $\{A \in\Rqqsym: \tr(A) = 0\}$
in place of $\Rqqsympd$ and $\Rqqsym$, respectively.
\end{proof}

\begin{proof}[\bf Proof of Lemma~\ref{lem:PsiQ.better.than.I}]
Writing $\Psi(\tilde{Q}) = \Psi_\rho(I,\tilde{Q})$ for arbitrary
distributions $\tilde{Q}$ and $B := \Sigma^{1/2}$, note first that
\begin{align*}
L_\rho(\Psi_\rho(\Sigma,Q), Q) - L_\rho(\Sigma,Q) \
&= \ L_\rho( B \Psi(Q_B) B^\top, Q)
- L_\rho(BB^\top, Q) \\
&= \ L_\rho( \Psi(Q_B), Q_B).
\end{align*}
Hence it suffices to show that
\[
L_\rho(\Psi(Q_B),Q_B) \ < \ 0
\]
unless $\Psi(Q_B) = I_q$. It follows from the second part of
Lemma~\ref{lem:Expansion of rho} that for $\Gamma\in\Rqqsympd$,
\begin{align*}
L_\rho(\Gamma,Q_B) \
&= \ \int\bigl[ \rho(\tr(\Gamma^{-1} M)) - \rho(\tr(M)) \bigr]
\, Q_B(dM)
+ \log\det(\Gamma) \\
&\le\ \int\rho'(\tr(M)) \bigl[ \tr(\Gamma^{-1} M) - \tr(M)
\bigr] \, Q_B(dM)
+ \log\det(\Gamma) \\
&= \ \tr\bigl( (\Gamma^{-1} - I_q) \Psi(Q_B) \bigr)
+ \log\det(\Gamma).
\end{align*}
Hence
\begin{align*}
L_\rho(\Psi(Q_B),Q_B) \
&\le\ \tr( I_q - \Psi(Q_B)) + \log\det\Psi(Q_B) \\
&= \ \sum_{i=1}^q \bigl[ 1 - \lambda_i(\Psi(Q_B)) + \log\lambda
_i(\Psi(Q_B)) \bigr].
\end{align*}
Since $1 - x + \log x < 0$ for $0 < x \ne1$, the latter sum is
strictly negative unless $\lambda_i(\Psi(Q_B)) = 1$ for $1 \le i \le
q$, which is equivalent to $\Psi(Q_B) = I_q$, i.e.\ $\Psi_\rho
(\Sigma,Q) = \Sigma$.
\end{proof}

\begin{proof}[\bf Proof of Lemma~\ref{lem:Fixed-point algorithm}]
Under the stated conditions on the distribution $Q$, the function
$L_\rho(\cdot,Q)$ has a minimizer $\Sigma_o$, that means, $\Psi
_\rho(\Sigma_o,Q) = \Sigma_o$. Note that
\[
\Sigma_o^{-1/2}\Sigma_k^{}\Sigma_o^{-1/2}
\ = \ \Sigma_o^{-1/2}\Psi_\rho(\Sigma_{k-1}^{},Q)\Sigma_o^{-1/2}
\ = \ \Psi_\rho(\Sigma_o^{-1/2}\Sigma_{k-1}^{}\Sigma_o^{-1/2},
Q_{\Sigma_o^{1/2}}).
\]
Hence we may assume w.l.o.g.\ that $\Sigma_o = I_q$ and $\Psi_\rho
(I_q,Q) = I_q$. Again we write $\Psi(\cdot)$ instead of $\Psi_\rho
(\cdot,Q)$.

The equation $\Psi(I_q) = I_q$ implies that the mapping $\Psi$ has
the following properties, as shown below: For any $\Sigma\in\Rqqsympd$,
\begin{align*}
\lambda_{\rm min}(\Psi(\Sigma)) \
&\ge\ a :=
\begin{cases}
\lambda_{\rm min}(\Sigma) & \text{in Case~0}, \\
\min\{\lambda_{\rm min}(\Sigma), 1\} & \text{in Case~1},
\end{cases}
\\
\lambda_{\rm max}(\Psi(\Sigma)) \
&\le\ b :=
\begin{cases}
\lambda_{\rm max}(\Sigma) & \text{in Case~0}, \\
\max\{\lambda_{\rm max}(\Sigma), 1\} & \text{in Case~1}.
\end{cases}
\end{align*}
This follows from Lemma~\ref{lem:Trace inequalities} and various
properties of $\rho$: For any $M \in\Rqqsympsd$,
\[
\lambda_{\rm max}(\Sigma)^{-1} \tr(M)
\ \le\ \tr(\Sigma^{-1} M)
\ \le\ \lambda_{\rm min}(\Sigma)^{-1} \tr(M).
\]
Hence for any unit vector $v \in\R^q$,
\begin{align*}
v^\top\Psi(\Sigma) v&
\ = \ \int\rho'(\tr(\Sigma^{-1}M)) \, v^\top Mv \, Q(dM) \\
&
\begin{cases}
\displaystyle
\ge\ a \int\rho'(\tr(M)) \, v^\top Mv \, Q(dM)
\ = \ a \, v^\top\Psi(I_q) v
\ = \ a, \\
\displaystyle
\le\ b \int\rho'(\tr(M)) \, v^\top Mv \, Q(dM)
\ = \ b \, v^\top\Psi(I_q) v
\ = \ b,
\end{cases}
\end{align*}
because for $M \ne0$,
\[
\rho'(\tr(\Sigma^{-1} M))
\begin{cases}
\displaystyle
\ge\rho'(\tr(M)/a)
= \frac{\psi(\tr(M)/a)}{\tr(M)/a}
\ge\frac{a \psi(\tr(M))}{\tr(M)}
= a \rho'(\tr(M)), \\[2ex]
\displaystyle
\le\rho'(\tr(M)/b)
= \frac{\psi(\tr(M)/b)}{\tr(M)/b}
\le\frac{b \psi(\tr(M))}{\tr(M)}
= b \rho'(\tr(M)),
\end{cases}
\]
due to $\rho'$ being non-increasing and $\psi$ being constant in
Case~0 and increasing on $(0,\infty)$ in Case~1.

Now we define
\[
[a_k,b_k] \ := \
\begin{cases}
\bigl[ \lambda_{\rm min}(\Sigma_k), \lambda_{\rm max}(\Sigma_k)
\bigr]
& \text{in Case~0}, \\
\bigl[ \min\{\lambda_{\rm min}(\Sigma_k),1\}, \max\{\lambda_{\rm
max}(\Sigma_k),1\} \bigr]
& \text{in Case~1}.
\end{cases}
\]
Then $(a_k)_k$ and $(b_k)_k$ are non-decreasing and non-increasing,
respectively, with corresponding limits $a_* \le b_*$. In Case~0 we
have to show that $a_* = b_*$, because then $\Sigma_k \to a_* I_q$. In
Case~1 we have to show that $a_* = b_* = 1$, because then $\Sigma_k
\to I_q$. To this end, note that the set $\bigl\{ \Sigma\in\Rqqsym:
\lambda(\Sigma) \in[a_0,b_0]^q \bigr\}$ is compact. Hence there
exist indices $k(1) < k(2) < k(3) < \cdots$ such that $\Sigma_{k(\ell
)} \to\Sigma_*$ as $\ell\to\infty$, where $\lambda(\Sigma_*) \in
[a_0,b_0]^q$. Lemma~\ref{lem:PsiQ.better.than.I} entails that the
sequence $\bigl( L_\rho(\Sigma_k,Q) \bigr)_{k \ge0}$ is
non-increasing. Consequently, since $L_\rho(\cdot,Q)$ and $\Psi
(\cdot)$ are continuous,
\begin{align*}
L_\rho(\Sigma_*,Q) \
&= \ \lim_{\ell\to\infty} L_\rho(\Sigma_{k(\ell)},Q) \\
&= \ \lim_{\ell\to\infty} L_\rho(\Sigma_{k(\ell)+1},Q)
\ = \ \lim_{\ell\to\infty} L_\rho(\Psi(\Sigma_{k(\ell)},Q)
\ = \ L_\rho(\Psi(\Sigma_*),Q).
\end{align*}
Hence Lemma~\ref{lem:PsiQ.better.than.I} implies that $\Psi(\Sigma
_*) = \Psi_\rho(\Sigma_*,Q) = \Sigma_*$. Thus $\Sigma_*$ is a
minimizer of $L_\rho(\cdot,Q)$. In Case~0 this implies that $\Sigma
_*$ is a positive multiple of $I_q$, whence $a_* = \lambda_{\rm
min}(\Sigma_*) = \lambda_{\rm max}(\Sigma_*) = b_*$. In Case~1 this
implies that $\Sigma_* = I_q$, whence $a_* = \min\{\lambda_{\rm
min}(\Sigma_*),1\} = 1$ and $b_* = \max\{\lambda_{\rm max}(\Sigma
_*),1\} = 1$.
\end{proof}


\subsection{Proofs for Section~\ref{sec:AnalysisL}}
\begingroup\abovedisplayskip=7pt\belowdisplayskip=7pt
\begin{proof}[\bf Proof of Lemma~\ref{lem:Taylor of exp}]
By definition,
\[
\exp(A+\Delta) \ = \ \sum_{\ell=0}^\infty\frac{(A + \Delta)^\ell
}{\ell!},
\]
and for $\ell\ge1$, the expansion of $(A + \Delta)^\ell$ is the sum
of $A^\ell$ and all matrices of the form $A^{s_0}\Delta A^{s_1} \cdots
\Delta A^{s_k}$ with $k \in\{1, \ldots, \ell\}$ times the factor
$\Delta$ and exponents $s_0, \ldots, s_k \ge0$ such that $s_+ :=\sum
_{j=0}^k s_j$ equals $\ell-k$. Consequently,
\[
\exp(A + \Delta)
\ = \ \exp(A) + \sum_{k=1}^\infty T_k(A,\Delta)
\]
with
\[
T_k(A,\Delta)
\ := \ \sum_{s_0, \ldots, s_k \ge0}
\frac{A^{s_0} \Delta A^{s_1} \cdots\Delta A^{s_k}}{(s_+ + k)!}.
\]
Note that for given $\ell\ge k$ there are $\binom{\ell}{k}$ tupels
$(s_0,\ldots,s_k)$ of integers $s_j \ge0$ with $s_+ = \ell-k$. Thus
\[
\|T_k(A,\Delta)\|
\ \le\ \sum_{\ell=k}^\infty
\binom{\ell}{k} \frac{\|A\|^{\ell-k} \|\Delta\|^k}{\ell!}
\ = \ e_{}^{\|A\|} \frac{\|\Delta\|^k}{k!}.
\]
In particular,
\[
\exp(A + \Delta)
\ = \ \exp(A) + R_0(A,\Delta)
\ = \ \exp(A) + T_1(A,\Delta) + R_1(A,\Delta)
\]
with
\[
\|R_m(A,\Delta)\|
\ \le\ \sum_{k=m+1}^\infty e_{}^{\|A\|} \frac{\|\Delta\|^k}{k!}
\ \le\ e_{}^{\|A\| + \|\Delta\|} \frac{\|\Delta\|^{m+1}}{(m+1)!}
\]
for $m = 0,1$.

It remains to derive alternative expressions for $T_k(A,\Delta)$.
First of all, it follows from a well-known identity for the beta
function that
\begin{align*}
T_1(A,\Delta) \
&= \ \sum_{s_0,s_1 \ge0} \frac{A^{s_0} \Delta A^{s_1}}{(s_0 + s_1 +
1)!} \\
&= \ \sum_{s_0,s_1 \ge0} \frac{s_0! s_1!}{(s_0 + s_1 + 1)!}
\frac{A^{s_0}}{s_0!} \Delta\frac{A^{s_1}}{s_1!} \\
&= \ \sum_{s_0,s_1 \ge0} \int_0^1 (1 - u)^{s_0} u^{s_1} \, du \
\frac{A^{s_0}}{s_0!} \Delta\frac{A^{s_1}}{s_1!} \\
&= \ \int_0^1 \sum_{s_0,s_1 \ge0}
\frac{((1 - u)A)^{s_0}}{s_0!} \Delta\frac{(uA)^{s_1}}{s_1!} \ du \\
&= \ \int_0^1 \exp((1-u)A) \Delta\exp(uA) \, du.
\end{align*}
For general $k \ge1$ we utilize a special construction of the random
tupel $(U_{kj})_{j=0}^k$ which is well-known from uniform order
statistics: If $E_0, E_1, E_2, \ldots$ are independent standard
exponential random variables, then the random variable
$(U_{kj})_{j=0}^k := (E_j/F)_{j=0}^k$ with $F := \sum_{j=0}^k E_j$ has
the desired distribution. Moreover, $(U_{kj})_{j=0}^k$ and $F$ are
stochastically independent, where $F$ has distribution $\mathrm
{Gamma}(k+1,1)$. From these facts one can derive that
\begin{align*}
\Ex\bigl[ U_{k0}^{s_0} U_{k1}^{s_1} \cdots U_{kk}^{s_k} \bigr] \
&= \ \Ex\bigl[ F^{s_+} U_{k0}^{s_0} U_{k1}^{s_1} \cdots U_{kk}^{s_k}
\bigr]
\big/ \Ex(F^{s_+}) \\
&= \ \Ex\bigl[ E_{0}^{s_0} E_{1}^{s_1} \cdots E_{k}^{s_k} \bigr]
\big/ \Ex(F^{s_+})
\ = \ \frac{s_0! s_1! \cdots s_k!}{(s_+ + k)!/k!},
\end{align*}
so
\begin{align*}
T_k(A,\Delta) \
&= \ \frac{1}{k!} \sum_{s_0, \ldots, s_k \ge0}
\Ex\Bigl[ \frac{(U_{k0}A)^{s_0}}{s_0!} B \frac{(U_{k1}A)^{s_1}}{s_1!}
\cdots B \frac{(U_{kk}A)^{s_k}}{s_k!} \Bigr] \\
&= \ \frac{1}{k!}
\Ex\bigl[ \exp(U_{k0}A) B \exp(U_{k1}A) \cdots B \exp(U_{kk}A)
\bigr].
\end{align*}
\\[-5ex]
\end{proof}\endgroup

In our proofs of Propositions~\ref{prop:Expansion 1} and \ref
{prop:Expansion 2} we utilize two elementary bounds for random
variables with bounded support. The first one is well-known, but we
haven't seen the second one elsewhere.

\begin{Lemma}
\label{lem:Moments}
Let $Y$ be a random variable with values in $[a,b]$. Then
\[
\Var(Y) \ \le\ (b-a)^2/4
\quad\text{and}\quad
\bigl| \Ex\bigl( (Y-\Ex(Y))^3 \bigr) \bigr| \ \le\ (b-a)^3 / (6
\sqrt{3}).
\]
\end{Lemma}

In addition we need several properties of an auxiliary function:

\begin{Lemma}
\label{lem:Expansion 0}
Let $A \in\Rqqsym$ and $M \in\Rqqsympsd\setminus\{0\}$. For $t \in
\R$ let
\[
g(t) = g(t,A,M) \ := \ \log\tr(\exp(-tA)M).
\]
This defines a smooth convex function $g$ on $\R$ with the following
properties:
\begin{align*}
|g'| \
&\le\ \|A\| \
\quad\text{with}\quad
g'(0) \ = \ - \tr(AM)/\tr(M), \\
0 \ \le\ g'' \
&\le\ \|A\|^2
\quad\text{with}\quad
g''(0) \ = \ \tr(A^2M)/\tr(M) - \tr(AM)^2/\tr(M)^2, \\
|g'''| \
&\le\ \|A\|^3 \, 4/\sqrt{27}.
\end{align*}
Furthermore, either $g'' > 0$ on $\R$, or there exists an eigenvalue
$\lambda$ of $A$ such that
\[
M \ \in\ \M(\{ x \in\R^q : Ax = \lambda x\}),
\quad
g' \ \equiv\ - \lambda
\quad\text{and}\quad
g'' \ \equiv\ 0.
\]
\end{Lemma}

\begin{proof}[\bf Proof of Lemma~\ref{lem:Moments}]
It suffices to consider the case $[a,b] = [0,1]$, because otherwise one
could just replace $Y$ with $(Y - a)/(b - a)$. Then
\[
\Var(Y) \ = \ \Ex(Y^2) - \Ex(Y)^2
\ \le\ \Ex(Y) - \Ex(Y)^2 \ \le\ 1/4
\]
with equality if, and only if, $Y \in\{0,1\}$ almost surely and $\Ex
(Y) = 1/2$.\vadjust{\goodbreak}

As to the central third moment, with $\mu:= \Ex(Y)$ it suffices to
prove that
\begin{equation}
\label{ineq:3rd.moment}
\Ex((Y - \mu)^3) \ \le\ 1/(6 \sqrt{3}),
\end{equation}
because $- (Y - \mu)^3 = ((1 - Y) - (1 - \mu))^3$. We only have to
consider the situation that $0 < \mu< 1$ with strictly positive
probabilities $p_0 := \Pr(Y < \mu)$ and $p_1 := \Pr(Y \ge\mu)$,
because otherwise $Y = \mu$ almost surely. Note that $h(x) := (x - \mu
)^3$ is concave on $[0,\mu]$ and convex on $[\mu,1]$. Hence with
\[
x_0 \ := \ \Ex(Y \,|\, Y < \mu)
\quad\text{and}\quad
x_1 \ := \ \Ex(Y \,|\, Y \ge\mu)
\]
we may conclude from Jensen's inequality that
\begin{align*}
\Ex((Y - \mu)^3) \
&= \ p_0 \Ex(h(Y) \,|\ Y < \mu) + p_1 \Ex(h(Y) \,|\, Y \ge\mu) \\
&\le\ p_0 (x_0 - \mu)^3 + p_1 \Ex(h(Y) \,|\, Y \ge\mu) \\
&\le\ p_0 (x_0 - \mu)^3
+ p_1 \Ex\Bigl( \frac{1 - Y}{1 - \mu} h(\mu) + \frac{Y - \mu}{1
- \mu} h(1)
\,\Big|\, Y \ge\mu\Bigr) \\
&= \ p_0 (x_0 - \mu)^3
+ p_1 \Ex\Bigl( \frac{Y - \mu}{1 - \mu} (1 - \mu)^3
\,\Big|\, Y \ge\mu\Bigr) \\
&= \ p_0 (x_0 - \mu)^3
+ p_1 (x_1 - \mu) (1 - \mu)^2.
\end{align*}
Equality holds if
\[
Y \ \sim\ p_0 \delta_{x_0}^{}
+ \frac{p_1 (1 - x_1)}{1 - \mu} \delta_\mu^{}
+ \frac{p_1 (x_1 - \mu)}{1 - \mu} \delta_1^{}.
\]
Note that in the latter case, $\Ex(Y)$ is still equal to $\mu$,
because $p_0x_0 + p_1x_1 = \mu$. If we replace $\LL(Y)$ with $\LL(Y
\,|\, Y \ne\mu)$, the mean does not change, but $\Ex((Y - \mu)^3)$
increases by the factor $1/\Pr(Y \ne\mu)$. Thus it even suffices to
consider distributions $\LL(Y)$ which are concentrated on two points
$x_0 \in[0,1)$ and $1$. Finally, in case of $x_0 > 0$ we could replace
$Y$ and $\mu$ with $(Y - x_0)/(1 - x_0)$ and $(\mu- x_0)/(1 - x_0) =
\Pr(Y = 1)$, respectively. This would increase $\Ex((Y - \mu)^3)$ by
a factor $(1 - x_0)^{-3}$ and lead to a random variable with values in
$\{0,1\}$.

Finally we have to maximize
\[
(1 - \mu) (0 - \mu)^3 + \mu(1 - \mu)^3
\ = \ \mu(1 - \mu) (1 - 2\mu)
\]
over all $\mu\in(0,1)$. With $u := 1 - 2\mu\in(-1,1)$ one may write
\[
\mu(1 - \mu) (1 - 2\mu) \ = \ 4^{-1} (1 - u^2) u \ \le\ 1/(6\sqrt{3})
\]
with equality for $u = 1/\sqrt{3}$.
\end{proof}

\begin{proof}[\bf Proof of Lemma~\ref{lem:Expansion 0}]
Let $A = \sum_{i=1}^q \lambda_i(A) u_i^{} u_i^\top$ with an
orthonormal basis $u_1$, $u_2$, \ldots, $u_q$ of $\R^q$. Then $\tr
(M) = \sum_{i=1}^q u_i^\top M u_i^{}$ and
\[
g(t)
\ = \ \log\Bigl( \sum_{i=1}^q e_{}^{- t \lambda_i(A)} u_i^\top M
u_i^{} \Bigr)
\ = \ \log\tr(M) + \log\Ex(e^{tY}),
\]
where $Y \sim\sum_{i=1}^q p_i \delta_{- \lambda_i(A)}^{}$ with
$p_i^{} := u_i^\top M u_i^{} / \tr(M)$. Elementary calculations show that
\begin{align*}
g'(t) \
= \ & \Ex(e^{tY} Y) / \Ex(e^{tY}), \\
g''(t) \
= \ & \Ex(e^{tY} Y^2) / \Ex(e^{tY})
- \Ex(e^{tY} Y)^2 / \Ex(e^{tY})^2, \\
g'''(t) \
= \ & \Ex(e^{tY} Y^3) / \Ex(e^{tY})
- 3 \Ex(e^{tY} Y^2) \Ex(e^{tY} Y) / \Ex(e^{tY})^2 \\
&+ \ 2 \Ex(e^{tY} Y)^3 / \Ex(e^{tY})^3.
\end{align*}
Defining the modified distribution $\Pr_t$ via $\Pr_t(B) := \Ex
(e^{tY} 1_B) / \Ex(e^{tY})$, we may rewrite this as
\[
g'(t) \ = \ \Ex_t(Y),
\quad
g''(t) = \Var_t(Y)
\quad\text{and}\quad
g'''(t) \ = \ \Ex_t \bigl( (Y - \Ex_t(Y))^3 \bigr).
\]
In particular, $g'(0) = \Ex(Y)$ equals $- \tr(AM)/\tr(M)$, and
$g''(0) = \Var(Y)$ equals $\tr(A^2M)/\tr(M) - \tr(AM)^2/\tr(M)^2$.

Note that $|Y| \le\|A\|$, so $|g'| \le\|A\|$. Further it follows from
Lemma~\ref{lem:Moments} with $[a,b] = \bigl[ -\|A\|, \|A\| \bigr]$
that $0 \le g''(0) \le\|A\|^2$, and $|g'''| \le\|A\|^3 4/\sqrt{27}$.

Finally, for any $t_o \in\R$ the equation $g''(t_o) = 0$ is
equivalent to $Y$ being constant almost surely with respect to $\Pr
_{t_o}$. But this means that for some eigenvalue $\lambda$ of $A$,
\[
u_i^\top M u_i^{} \ = \ 0
\quad\text{whenever}\quad
\lambda_i(A) \ \ne\ \lambda,
\]
so $M \in\M(\{x \in\R^q : Ax = \lambda x\})$. This implies that
$g(t) = g(0) - \lambda t$ for all $t \in\R$, whence $g' \equiv
-\lambda$ and $g'' \equiv0$.
\end{proof}

\begin{proof}[\bf Proof of Proposition~\ref{prop:Expansion 1}]
Note first that
\[
L_\rho(\exp(A),Q)
\ = \ \tr(A) + \int\bigl[ \rho(\tr(\exp(-A)M)) - \rho(\tr(M))
\bigr] \, Q(dM).
\]
For fixed $M \in\Rqqsympsd\setminus\{0\}$ let $a := \tr(M) > 0$ and
$b := \tr(\exp(-A)M)$. Then $b/a \in\bigl[ \lambda_{\rm min}(\exp
(-A)), \lambda_{\rm max}(\exp(-A)) \bigr] \subset[e^{-\|A\|}, e^{\|
A\|}]$ by Lemma~\ref{lem:Trace inequalities}. Hence Lemma~\ref
{lem:Expansion of rho} implies that $\rho(\tr(\exp(-A)M)) - \rho
(\tr(M))$ equals
\[
\rho(b) - \rho(a)
\ = \ \psi(a) \log(b/a) + r_1(a,b)
\]
with
\begin{align*}
|r_1(a,b)| \
&\le\ \bigl( \psi(\max\{a,b\}) - \psi(\min\{a,b\}) \bigr) |\log
(b/a)| \\
&\le\ \bigl( \psi\bigl( e^{\|A\|} \tr(M) \bigr)
- \psi\bigl( e^{-\|A\|} \tr(M) \bigr) \bigr) \|A\|.
\end{align*}
Moreover, $\log(b/a) = g(1) - g(0)$ with $g = g(\cdot,A,M)$ as in
Lemma~\ref{lem:Expansion 0}. Hence for a suitable number $\xi\in(0,1)$,
\[
g(1) - g(0) \ = \ g'(0) + g''(\xi)/2\vadjust{\goodbreak}
\]
where $g'(0) = - \tr(AM)/\tr(M)$ and $0 \le g''(\xi) \le\|A\|^2$.
All in all we obtain the expansion
\begin{align*}
\rho(b) - \rho(a) \
&= \ \psi(a) g'(0) + \psi(a) g''(\xi)/2 + r_1(a,b) \\
&= \ - \rho'(\tr(M)) \tr(AM) + \psi(\tr(M)) g''(\xi)/2 + r_1(a,b).
\end{align*}
Consequently
\[
L_\rho(\exp(A),Q)
\ = \ \tr(A) - \int\rho'(\tr(M)) \tr(AM) \, Q(dM) + R_\rho(A,Q),
\]
where
\[
|R_\rho(A,Q)| \ \le\ \bigl( J_\rho(e^{\|A\|},Q) - J_\rho(e^{-\|A\|
},Q) \bigr) \|A\|
+ J_\rho(Q) \|A\|^2/2.
\]
Moreover,
\[
\tr(A) - \int\rho'(\tr(M)) \tr(AM) \, Q(dM) \ = \ \langle A,
G_\rho(Q)\rangle
\]
with $G_\rho(Q) = I_q - \int\rho'(\tr(M)) M \, Q(dM) = I_q - \Psi
_\rho(Q)$, and the inequalities $|\tr(A)| \le q \|A\|$ and $|\tr
(AM)\| \le\|A\| \tr(M)$ imply that $\bigl| \langle A, G_\rho
(Q)\rangle\bigr|$ is bounded by $(q + J_\rho(Q)) \|A\|$.
\end{proof}

\begin{proof}[\bf Proof of Corollary~\ref{cor:Diff.and.Lipschitz}]
For fixed $\Sigma\in\Rqqsympd$ let $B := \Sigma^{1/2}$. If $\Delta
\in\Rqqsym$ with $\|\Delta\| < \lambda_{\rm min}(\Sigma)$, then
$\Sigma+ \Delta\in\Rqqsympd$, too, and we may write
\[
\Sigma+ \Delta
\ = \ B (I_q + B^{-1}\Delta B^{-1}) B
\ = \ B \exp(A(\Delta)) B
\]
with $A(\Delta) := \log(I_q + B^{-1}\Delta B^{-1})$, whence
\[
L_\rho(\Sigma+ \Delta, Q) - L_\rho(\Sigma,Q)
\ = \ L_\rho(\exp(A(\Delta)),Q_B).
\]
As $\Delta\to0$,
\[
A(\Delta) \ = \ B^{-1} \Delta B^{-1} + O(\|\Delta\|^2),
\]
so it follows from Proposition~\ref{prop:Expansion 1} that
\begin{align*}
L_\rho(\exp(A(\Delta)),Q_B) \
&= \ \langle B^{-1} \Delta B^{-1}, G_\rho(Q_B) \rangle+ o(\|\Delta\|
) \\
&= \ \langle\Delta, B^{-1} G_\rho(Q_B) B^{-1} \rangle+ o(\|\Delta\|
).
\end{align*}
Consequently, $\nabla L_\rho(\Sigma,Q)$ equals
\[
B^{-1} G_\rho(Q_B) B^{-1}
\ = \ \Sigma^{-1} - \int\rho'(\tr(\Sigma^{-1}M)) \Sigma
^{-1}M\Sigma^{-1} \, Q(dM).
\]
By dominated convergence, this is continuous in $\Sigma$, because
$\Sigma\mapsto\Sigma^{-1}$ is continuous, $\rho'$ is continuous on
$(0,\infty)$, and the norm of the integrand on the right hand side is
not greater than $\lambda_{\rm min}(\Sigma)^{-1} \psi(\lambda_{\rm
min}(\Sigma)^{-1} \tr(M))$.

For a compact convex set $K \subset\Rqqsympd$ and $\Sigma_0,\Sigma
_1 \in K$ define the convex combination $\Sigma_t := (1 - t)\Sigma_0
+ t \Sigma_1$ for $t \in[0,1]$. Then $L_\rho(\Sigma_t,Q)$ is
differentiable in $t$ with derivative $\langle\Sigma_1 - \Sigma_0,
\nabla L_\rho(\Sigma_t,Q) \rangle$. Hence for a suitable point $\xi
\in(0,1)$ and $B := \Sigma_\xi^{1/2}$ it follows from the bounds in
Proposition~\ref{prop:Expansion 1} and inequality \eqref{ineq:JQ} that
\begin{align*}
\bigl| L_\rho(\Sigma_1,Q) - L_\rho(\Sigma_0) \bigr| \
&= \ \bigl| \langle\Sigma_1 - \Sigma_0,
\nabla L_\rho(\Sigma_\xi,Q) \rangle\bigr| \\
&= \ \bigl| \bigl\langle B^{-1}(\Sigma_1 - \Sigma_0) B^{-1},
G_\rho(Q_B) \bigr\rangle\bigr| \\
&\le\ (q + J_\rho(Q_B)) \bigl\| B^{-1}(\Sigma_1 - \Sigma_0) B^{-1}
\bigr\| \\
&\le\ \bigl( q + J_\rho(\lambda_{\rm min}(\Sigma_\xi)^{-1},Q)
\bigr)
\lambda_{\rm min}(\Sigma)^{-1} \bigl\| \Sigma_1 - \Sigma_0 \bigr
\| \\
&\le\ (q + J_\rho(\Lambda_K,Q) \bigr)
\Lambda_K \bigl\| \Sigma_1 - \Sigma_0 \bigr\|.
\end{align*}
\\[-5ex]
\end{proof}

\begin{proof}[\bf Proof of Proposition~\ref{prop:Convexity}]
Note first that by \eqref{eq:equivariance L_rho},
\begin{align*}
L_\rho( & B \exp(tA) B^\top, Q) - L_\rho(BB^\top, Q) \\
&= \ L_\rho(\exp(tA), Q_B) \\
&= \ t \cdot\tr(A)
+ \int\bigl[ \rho(\tr(\exp(-tA)M)) - \rho(\tr(M)) \bigr] \,
Q(dM).
\end{align*}
Thus we consider a fixed matrix $M \in\Rqqsympsd\setminus\{0\}$ and
verify convexity of
\[
h(t) = h(t,A,M) \ := \ \rho(e^{g(t)})
\]
with $g(t) = \log\tr(\exp(-tA)M))$ as in Lemma~\ref{lem:Expansion
0}. Indeed,
\[
h'(t) \ = \ \rho'(e^{g(t)}) e^{g(t)} g'(t)
\ = \ \psi(e^{g(t)}) g'(t)
\]
is monotone increasing in $t \in\R$. For if $s < t$, then
\begin{align}
\nonumber
\psi(e^{g(t)}) g'(t)
& - \psi(e^{g(s)}) g'(s) \\
\nonumber
&= \
\begin{cases}
\bigl( \psi(e^{g(t)}) - \psi(e^{g(s)}) \bigr) g'(s)
+ \psi(e^{g(t)}) \bigl( g'(t) - g'(s) \bigr) \\
\bigl( \psi(e^{g(t)}) - \psi(e^{g(s)}) \bigr) g'(t)
+ \psi(e^{g(s)}) \bigl( g'(t) - g'(s) \bigr)
\end{cases}
\\
\label{ineq:convexity1}
&\ge\
\begin{cases}
\bigl( \psi(e^{g(t)}) - \psi(e^{g(s)}) \bigr) g'(s) \\
\bigl( \psi(e^{g(t)}) - \psi(e^{g(s)}) \bigr) g'(t)
\end{cases}
\\
\label{ineq:convexity2}
&\ge\ 0.
\end{align}
Inequality \eqref{ineq:convexity1} follows from $\psi$ being positive
and $g'$ being non-decreasing. Inequality \eqref{ineq:convexity2}
follows from $\psi$ being non-decreasing and $g$ being convex. For if
$\psi(e^{g(t)}) - \psi(e^{g(s)}) > 0$, then $g(t) - g(s) > 0$ and
thus $g'(t) > 0$. Likewise $\psi(e^{g(t)}) - \psi(e^{g(s)}) < 0$
implies that $g(t) - g(s) < 0$ whence $g'(s) < 0$.

Concerning strict convexity, recall from Lemma~\ref{lem:Expansion 0}
that either $g'' > 0$ on $\R$, or $g'' \equiv0$ and $M \in\bigcup
_{i=1}^\ell\M(\V_i)$. Hence, in Case~0, $\psi(e^g) g' = q g'$ is
strictly increasing if, and only if, $M \not\in\bigcup_{i=1}^\ell\M
(\V_i)$. Consequently, $t \mapsto L_\rho(B\exp(tA)B^\top, Q)$ is
strictly convex if, and only if, $Q_B \bigl( \bigcup_{i=1}^\ell\M
(\V_i) \bigr) = Q \bigl( \bigcup_{i=1}^\ell\M(B\V_i) \bigr) < 1$.

In Case~1, inequality \eqref{ineq:convexity1} is strict, unless $g''
\equiv0$. But in the latter case, $g(t) = g(0) + g'(0) t$ and $g'(t) =
g'(0)$, so inequality \eqref{ineq:convexity2} is strict, unless \mbox{$g'(0)
= 0$}. Hence $h$ is strictly convex unless $g$ is constant. But this is
equivalent to saying that $M \in\M(\V_0)$. Consequently, $t \mapsto
L_\rho(B\exp(tA)B^\top, Q)$ is strictly convex, unless $Q_B(\M(\V
_0)) = Q(\M(B\V_0)) = 1$.
\end{proof}

\begin{proof}[\bf Proof of Proposition~\ref{prop:Coercivity}]
Since Conditions~0 and 1 are not affected by replacing $Q$ with $Q_B$,
we may restrict our attention to $B = I_q$. Let $\W:= \{A \in\Rqqsym
: \tr(A) = 0\}$ in Case~0 and $\W:= \Rqqsym$ in Case~1. For $A \in
\W$ and $t \in\R$ let
\[
h(t,A) \ := \ L_\rho(\exp(tA),Q).
\]
We know from Proposition~\ref{prop:Convexity} that $h$ is convex in
the first argument. Moreover, the derivative $h'(t,A) = \partial h(t,A)
/ \partial t$ is given by
\[
h'(t,A) \ = \ \tr(A) + \int
\rho' \bigl( \tr(\exp(-tA)M \bigr) \tr(- A \exp(-tA) M) \, Q(dM).
\]
This could be verified directly or derived from Proposition~\ref
{prop:Expansion 1}, because $h(t+s,\break A)  - h(t,A) = L_\rho(\exp(sA),
Q_{\exp(tA/2)})$. The derivative $h'(t,A)$ is continuous in~$A$, which
implies the following equivalence:
\begin{equation}
\label{eq:Coercivity 1}
\lim_{\|B\| \to\infty, B \in\W} \, L_\rho(\exp(B),Q)
\ = \ \infty
\end{equation}
if, and only if,
\begin{equation}
\label{eq:Coercivity 2}
h'(A) := \lim_{t \to\infty} \, h'(t,A) \ > \ 0
\quad\text{for any fixed} \ A \in\W\setminus\{0\}.
\end{equation}
To see this, note first that $h'(A) \le0$ is equivalent to $h(\cdot
,A)$ being non-increasing. Thus a violation of \eqref{eq:Coercivity 2}
would imply a violation of \eqref{eq:Coercivity 1}. Now suppose that
\eqref{eq:Coercivity 2} holds true.
Since $h'(t, A)$ is non-decreasing in $t \ge0$ and continuous in $A
\in\mathbb{S}(\W):=\{A\in\W: \|A\|=1\}$,
\[
U(t) \ := \ \bigl\{ A \in\mathbb{S}(\W) : h'(t,A) > 0 \bigr\}
\]
is an open subset of $\mathbb{S}(\W)$ with $U(s) \subset U(t)$
whenever $s < t$.
Moreover, \eqref{eq:Coercivity 2} entails that $\bigcup_{t \ge0}
U(t) = \mathbb{S}(\W)$.
But the latter set is compact, so $U(t_o) = \mathbb{S}(\W)$ for some
$t_o \ge0$.
Now for $t \ge t_o$ we have by the convexity of $h$ in the first argument,
\begin{align*}
\min_{B \in\W\,:\, \|B\| = t} \, L_\rho(\exp(B),Q) \
&= \ \min_{A \in\mathbb{S}(\W)} \, h(t,A) \\
&\ge\ \min_{A \in\mathbb{S}(\W)} \, h(t_o,A)
+ (t - t_o) \min_{A \in\mathbb{S}(\W)} \, h'(t_o,A) \\
&\to\ \infty \quad\text{as} \ t \to\infty,
\end{align*}
i.e.\ \eqref{eq:Coercivity 1} is satisfied, too.

Now we determine the limit $h'(A)$ for fixed $A \in\W\setminus\{0\}
$. To this end we write $A = - \sum_{i=1}^q \beta_i^{} u_i^{}u_i^\top
$ with $\beta_i := - \lambda_i(A)$ and an orthonormal basis $u_1,
u_2, \ldots, u_q$ of $\R^q$. Then
\[
h'(t,A) \ = \ - \sum_{i=1}^q \beta_i
+ \int\psi\Bigl( \sum_{i=1}^q u_i^\top M u_i^{} \, e^{t\beta_i}
\Bigr)
\frac{\sum_{i=1}^q \beta_i u_i^\top M u_i^{} \, e^{t\beta_i}}
{\sum_{i=1}^q u_i^\top M u_i^{} \, e^{t\beta_i}}
\, Q(dM)
\]
with $\psi(0) \cdot0/0 := 0$. As shown in the proof of
Proposition~\ref{prop:Convexity}, the integrand on the right hand side
is non-decreasing in $t \ge0$. Let $\V_0 := \{0\}$ and $\V_j :=
\mathrm{span}(u_1,\ldots,u_j)$ for $1 \le j \le q$. If $M \in\M(\V
_j) \setminus\M(\V_{j-1})$, then $u_j^\top M u_j^{} > 0 = u_k^\top M
u_k^{}$ for $j < k \le q$, and one can easily derive from $\beta_1 \le
\beta_2 \le\cdots\le\beta_q$ that
\[
\lim_{t \to\infty} \,
\psi\Bigl( \sum_{i=1}^q u_i^\top M u_i^{} \, e^{t\beta_i} \Bigr)
\frac{\sum_{i=1}^q \beta_i u_i^\top M u_i^{} \, e^{t\beta_i}}
{\sum_{i=1}^q u_i^\top M u_i^{} \, e^{t\beta_i}}
\ = \
\begin{cases}
q \beta_j & \text{in Case~0} \\
\psi(\infty) \beta_j^+ & \text{in Case~1}
\end{cases}
\]
with the usual notation $a^\pm= \max(\pm a, 0)$ for real numbers $a$.
Thus it follows from monotone convergence that
\[
h'(A) \ = \ - \sum_{i=1}^q \beta_i +
\begin{cases}
\displaystyle
q \sum_{j=1}^q \beta_j Q \bigl( \M(\V_j)\setminus\M(\V_{j-1})
\bigr)
& \text{in Case~0}, \\
\displaystyle
\psi(\infty)
\sum_{j=1}^q \beta_j^+ Q \bigl( \M(\V_j)\setminus\M(\V_{j-1})
\bigr)
& \text{in Case~1}.
\end{cases}
\]

In Case~0, define $\gamma_d := \beta_{d+1} - \beta_d$ for $d =
1,\ldots,q-1$. Then
\begin{align*}
h'(A) \
&= \ q \sum_{j=1}^q \beta_j
\bigl[ -1/q + Q(\M(\V_j)) - Q(\M(\V_{j-1})) \bigr] \\
&= \ q \sum_{j=1}^q \beta_j
\bigl[ Q(\M(\V_j)) - j/q - Q(\M(\V_{j-1})) + (j-1)/q \bigr] \\
&= \ q \sum_{j=1}^{q-1} \beta_j \bigl[ Q(\M(\V_j)) - j/q \bigr]
+ q \sum_{j=2}^{q} \beta_j \bigl[ (j-1)/q - Q(\M(\V_{j-1})) \bigr
] \\
&= \ q \sum_{d=1}^{q-1} \gamma_d \bigl[ d/q - Q(\M(\V_d)) \bigr],
\end{align*}
where we utilized that $Q(\M(\V_0)) = Q(\{0\}) = 0$ and $Q(\M(\V
_q)) = Q(\Rqqsym) = 1$. Since all $\gamma_d$ are non-negative with
$\sum_{d=1}^{q-1} \gamma_d = \beta_q - \beta_1 > 0$, Condition~0
implies clearly that $h'(A) > 0$. On the other hand, if $Q(\M(\V))
\ge j/q$ for some $\V\in\VV_q$ with $d := \dim(\V) \in[1,q)$, we
may choose the basis $u_1, u_2, \ldots, u_q$ such that $\V= \V_d$,
and with $\beta_i := 1_{[i > d]} - (q-d)/q$, the matrix $A = - \sum
_{i=1}^q \beta_i^{} u_i^{} u_i^\top$ satisfies $h'(A) = q \bigl[ d/q
- Q(\M(\V_d)) \bigr] \le0$. Consequently, \eqref{eq:Coercivity 2} and
Condition~0 are equivalent in Case~0.

In Case 1, let $\gamma_d := \beta_{d+1}^+ - \beta_d^+$ for $d = 0,
1, \ldots, q-1$, where $\beta_0^+ := 0$. Then $- \sum_{i=1}^q \beta
_i$ is equal to
\[
\sum_{i=1}^q \beta_i^- - \sum_{i=1}^q (\beta_i^+ - \beta_0^+)
\ = \ \sum_{i=1}^q \beta_i^- - \sum_{i=1}^q \sum_{d=0}^{i-1} \gamma_d
\ = \ \sum_{i=1}^q \beta_i^- - \sum_{d=0}^{q-1} \gamma_d (q - d)
\]
and $\sum_{j=1}^q \beta_j^+ Q \bigl( \M(\V_j) \setminus\M(\V
_{j-1}) \bigr)$ may be written as
\[
\sum_{j=1}^q \beta_j^+ \bigl[ 1 - Q(\M(\V_{j-1})) \bigr]
- \sum_{j=0}^{q-1} \beta_j^+ \bigl[ 1 - Q(\M(\V_j)) \bigr]
\ = \ \sum_{d=0}^{q-1} \gamma_d \bigl[ 1 - Q(\M(\V_d)) \bigr].
\]
Consequently,
\[
h'(A)
\ = \ \sum_{i=1}^q \beta_i^-
+ \sum_{d=0}^{q-1} \gamma_d
\Bigl( \psi(\infty) \bigl[ 1 - Q(\M(\V_d)) \bigr] - (q - d)
\Bigr).
\]
Again one can easily deduce from $\gamma_d \ge0$ and $\sum
_{d=0}^{q-1} \gamma_d = \beta_q^+ = \max_i \beta_i^+$ that
Condition~1 implies \eqref{eq:Coercivity 2}. On the other hand, if
$Q(\M(\V)) \ge1 - (q - d)/\psi(\infty)$ for some $\V\in\VV_q$
with $d := \dim(\V) \in[0,q)$, we may choose the basis $u_1, u_2,
\ldots, u_q$ such that $\V= \V_d$, and with $\beta_i := 1_{[i >
d]}$ we obtain a matrix $A$ such that $h'(A) \le0$. Consequently,
\eqref{eq:Coercivity 2} and Condition~1 are equivalent in Case~1.
\end{proof}

\begin{proof}[\bf Proof of Lemma~\ref{lem:psi.and.kappa}]
Suppose that Condition~\eqref{ineq:psi.and.kappa.1} is satisfied; in
other words,
\[
\partial\log\phi(t) / \partial t \ \le\ \kappa t^{-1}
\quad\text{for all} \ t > 0.
\]
Now fix arbitrary $s > 0$ and $\lambda> 1$. For any integer $\ell> 1$,
\begin{align*}
\log\phi(\lambda s) - \log\phi(s) \
&= \ \sum_{i=1}^\ell
\bigl( \log\phi(\lambda^{i/\ell}s) - \log\phi(\lambda
^{(i-1)/\ell}s) \bigr) \\
&\le\ \sum_{i=1}^\ell(\lambda^{i/\ell}s - \lambda^{(i-1)/\ell}s)
\kappa(\lambda^{(i-1)/\ell} s)^{-1} \\
&= \ \kappa\ell(\lambda^{1/\ell} - 1)
\ \to\ \kappa\log\lambda\quad\text{as} \ \ell\to\infty.
\end{align*}
Consequently, $\log\phi(\lambda s) - \log\phi(s) \le\kappa\log
\lambda$, which proves Condition~\eqref{ineq:psi.and.kappa.2}.

On the other hand, if Condition~\eqref{ineq:psi.and.kappa.2} is
satisfied, then for $s > 0$,
\[
s \phi'(s)
\ = \ \lim_{\lambda\downarrow1} \frac{\phi(\lambda s) - \phi
(s)}{\lambda- 1}
\ \le\ \lim_{\lambda\downarrow1} \frac{(\lambda^\kappa- 1) \phi
(s)}{\lambda- 1}
\ = \ \kappa\phi(s).
\]
Hence Condition~\eqref{ineq:psi.and.kappa.1} is satisfied as well.
\end{proof}

\begin{proof}[\bf Proof of Proposition~\ref{prop:Expansion 2}]
As in the proof of Proposition~\ref{prop:Expansion 1} we start from
\[
L_\rho(\exp(A),Q)
\ = \ \tr(A) + \int\bigl[ \rho(\tr(\exp(-A)M)) - \rho(\tr(M))
\bigr] \, Q(dM)
\]
and analyze for a fixed $M \in\Rqqsympsd\setminus\{0\}$ the
difference $\rho(b) - \rho(a)$, where $a := \tr(M) > 0$ and $b :=
\tr(\exp(-A)M)$.\vadjust{\goodbreak}

Recall first that $b/a \in[e^{-\|A\|}, e^{\|A\|}]$. For $x \in\R$
define $f(x) := \rho(e^x a)$. Then $f'(x) = \rho'(e^x a) e^x a = \psi
(e^x a)$, and $f''(x) = \psi'(e^x a) e^x a = \psi_2(e^x a)$.
Consequently, for a suitable point $\xi$ between $0$ and $\log(b/a)$,
\begin{align*}
\rho(b) - \rho(a) \
&= \ f(\log(b/a)) - f(0) \\
&= \ \psi(a) \log(b/a) + \psi_2(e^\xi a) \log(b/a)^2/2 \\
&= \ \psi(a) \log(b/a) + \psi_2(a) \log(b/a)^2/2 + r_2(a,b),
\end{align*}
where
\[
|r_2(a,b)| \ \le\ \sup_{z \in[-\|A\|,\|A\|]}
\bigl| \psi_2(e^z s) - \psi_2(s) \bigr| \|A\|^2/2.
\]

Now we utilize the fact that $\log(b/a) = g(1) - g(0)$ with the
auxiliary function $g(t) := \log\tr(\exp(-tA)M)$ from Lemma~\ref
{lem:Expansion 0}. In particular, $|g(1) - g(0) - g'(0)| \le\|A\|^2/2$
and $|g(1) - g(0) - g'(0) - g''(0)/2| \le\|A\|^3 (4/\sqrt{27})/6 \le
\|A\|^3/7$. Consequently,
\begin{align*}
\psi(a) \log(b/a) \
&= \ \psi(a) \bigl( g'(0) + g''(0)/2 \bigr) + r_3(a,b), \\
\psi_2(a) \log(b/a)^2/2 \
&= \ \psi_2(a) g'(0)^2/2 + r_4(a,b),
\end{align*}
where
\begin{align*}
|r_3(a,b)| \
&< \ \psi(a) \|A\|^3/7, \\
|r_4(a,b)| \
&\le\ \psi_2(a) \bigl| \log(b/a)^2 - g'(0)^2 \bigr| / 2 \\
&\le\ \kappa\psi(a)
\bigl| \log(b/a) - g'(0) \bigr| \bigl( |\log(b/a)| + |g'(0)| \bigr
)/2 \\
&\le\ \kappa\psi(a) \|A\|^3.
\end{align*}
All in all this shows that
\[
\rho(b) - \rho(a) \ = \ \psi(a) g'(0) + \psi(a) g''(0)/2
+ \psi_2(a) g'(0)^2/2 + r_*(a,b)
\]
with
\[
|r_*(a,b)|
\ \le\ \sup_{z \in[-\|A\|,\|A\|]} \bigl| \psi_2(e^z a) - \psi
_2(a) \bigr|
\|A\|^2/2
+ \psi(a) (\kappa+ 1/7) \|A\|^3.
\]
Note that $\psi(a) g'(0) = \rho'(\tr(M)) \tr(AM)$. Moreover, it
follows from $|g'| \le\|A\|$, $0 \le g'' \le\|A\|^2$ and $\psi, \psi
_2 \ge0$ that
\begin{align*}
0 \ \le\ \psi(a) g''(0) + \psi_2(a) g'(0)^2 \
&\le\ \psi(\tr(M)) \|A\|^2 + \psi_2(\tr(M)) \|A\|^2 \\
&\le\ (1 + \kappa) \psi(\tr(M)) \|A\|^2.
\end{align*}
Furthermore, elementary calculations show that
\[
\psi(a) g''(0) + \psi_2(a) g'(0)^2
\ = \ \rho'(\tr(M)) \tr(A^2M) + \rho''(\tr(M)) \tr(AM)^2.
\]
Consequently,
\[
L(\exp(A),Q)
\ = \ \langle A, G_\rho(A)\rangle+ 2^{-1} H_\rho(A,Q) + R_{\rho,2}(A,Q)
\]
with the quadratic term\begingroup\abovedisplayskip=7pt\belowdisplayskip=7pt
\[
H_\rho(A,Q)
\ = \ \int\bigl( \rho'(\tr(M)) \tr(A^2 M) + \rho''(\tr(M)) \tr
(AM)^2 \bigr)
\, Q(dM)
\]\endgroup
and a remainder $R_{\rho,2}(A,Q)$ satisfying the asserted bounds
\eqref{eq:Expansion 2a} and \eqref{eq:Expansion 2b}.

It remains to prove inequality \eqref{eq:Expansion 2c}. Since $H_\rho
(A,Q)$ is the integral of the term $\psi(a) g''(0) + \psi_2(a)
g'(0)^2 \ge0$ with $a = \tr(M)$ and $g = g(\cdot,A,M)$, it is equal
to $0$ if, and only if, $\psi(a) g''(0) + \psi_2(a) g'(0)^2$ for
$Q$-almost all $M$. Based on Lemma~\ref{lem:Expansion 0} we may argue
as follows: In Case~0, $\psi(a) g''(0) + \psi_2(a) g'(0)^2 = q
g''(0)$ equals zero if, and only if, $M \in\bigcup_{i=1}^\ell\M(\V
_i)$. Hence $H_\rho(A,Q) > 0$ is equivalent to $Q \bigl( \bigcup
_{i=1}^\ell\M(\V_i) < 1$. In Case~1, both $\psi(a)$ and $\psi
_2(a)$ are strictly positive while $g''(0) \ge0$. Hence $\psi(a)
g''(0) + \psi_2(a) g'(0)^2$ equals zero if, and only if, $g''(0) =
g'(0) = 0$, which is equivalent to $M \in\M(\V_0)$. Consequently,
$H_\rho(A,Q) > 0$ if, and only if, $Q(\M(\V_0)) < 1$.\vspace*{-3pt}
\end{proof}


\subsection{Proofs for Section~\ref{sec:Scatter 2}}\vspace*{-3pt}
\begingroup\abovedisplayskip=6pt\belowdisplayskip=6pt
In the proof of Theorem~\ref{thm:Consistency} we utilize a well-known
elementary fact about weak convergence, adapted to random distributions:

\begin{Lemma}
\label{lem:Mallows convergence}
Let $Q$ be a fixed and $\hat{Q}_1, \hat{Q}_2, \hat{Q}_3, \ldots$ be
random probability distributions on a metric space $(\mathbb{Y},d)$
with the following two properties: For any bounded and continuous
function $f : \mathbb{Y} \to\R$,
\[
\int f \, d \hat{Q}_n \ \to_p \ \int f \, dQ.
\]
Further, for a particular continuous function $\phi: \mathbb{Y} \to
[0,\infty)$, $\int\phi\, d\hat{Q}_n < \infty$ almost surely for
all $n$, and
\[
\int\phi\, d\hat{Q}_n \ \to_p \ \int\phi\, dQ < \infty.
\]
Then
\[
\int f \, d\hat{Q}_n \ \to_p \ \int f \, dQ
\]
for any continuous function $f : \mathbb{Y} \to\R$ such that $|f|/(1
+ \phi)$ is bounded on $\mathbb{Y}$.
\end{Lemma}

\begin{proof}[\bf Proof of Lemma~\ref{lem:Mallows convergence}]
It suffices to consider any continuous function $f : \mathbb{Y} \to\R
$ such that $|f| \le\tilde{\phi} := 1 + \phi$. For any fixed number
$R \ge1$ let
\[
f_R(y) \ := \ \mathrm{sign}(f(y)) \min\{|f(y)|,R\}.
\]
Then
\begin{align*}
\Bigl| \int f \, d\hat{Q}_n - \int f \, dQ \Bigr| \
\le\ &\int|f - f_R| \, d\hat{Q}_n + \int|f - f_R| \, dQ \\
&+ \ \Bigl| \int f_R \, d\hat{Q}_n - \int f_R \, dQ \Bigr| \\
= \ &\int|f - f_R| \, d\hat{Q}_n + \int|f - f_R| \, dQ
+ o_p(1)
\end{align*}
by our first assumption. But $|f - f_R| = (|f| - R)^+ \le(\tilde{\phi
} - R)^+ = (\phi- R + 1)^+$,~so
\begin{align*}
\int|f - f_R| \, d\hat{Q}_n \
&\le\ \int(\phi- R + 1)^+ \, d\hat{Q}_n \\
&= \ \int\phi\, d\hat{Q}_n - \int\min\{\phi,R-1\} \, d\hat{Q}_n
\\
&\to_p \ \int\phi\, dQ
- \int\min\{\phi,R-1\} \, dQ
\ = \ \int(\phi- R + 1)^+ \, dQ
\end{align*}
by our assumptions. Consequently,
\[
\Bigl| \int f \, d\hat{Q}_n - \int f \, dQ \Bigr|
\ \le\ 2 \int(\phi- R + 1)^+ \, dQ
+ o_p(1),
\]
and the integral on the right hand is arbitrarily small for
sufficiently large $R$.
\end{proof}\endgroup

\begin{proof}[\bf Proof of Theorem~\ref{thm:Consistency}]
By linear equivariance we may assume without loss of generality that
$\bSigma_\rho(Q) = I_q$. Let $\W:= \{A \in\Rqqsym: \tr(A) = 0\}$
in Case~0, and $\W:= \Rqqsym$ in Case~1. For any fixed $\delta> 0$,
the set $K_\delta:= \{A \in\W: \|A\| \le\delta\}$ is compact, and
for $A \in K_\delta$,
\[
f(A,M)
\ := \ \tr(A)
+ \bigl[ \rho\bigl( \tr(\exp(-A) M) \bigr) - \rho(\tr(M)) \bigr]
\]
is continuous in $M \in\mathbb{Y}$ with
\[
|f(A,M)| \ \le\ q \delta+ \psi(e^\delta\tr(M)) \delta
\]
by Lemmas~\ref{lem:Trace inequalities} and \ref{lem:Expansion of
rho}. If $\delta$ is sufficiently small, $\psi(e^\delta\tr(M)) \le
\psi(\tr(\Sigma_o^{-1} M))$ for any $M \in\mathbb{Y}$. Then it
follows from Lemma~\ref{lem:Mallows convergence} that
\begin{align*}
L_\rho(\exp(A),Q_n) \
&= \ \int f(A,M), \hat{Q}_n(dM) \\
&\to_p \ \int f(A,M) \, Q(dM)
\ = \ L_\rho(\exp(A),Q)
\end{align*}
for any fixed $A \in K_\delta$. Moreover it follows from
Corollary~\ref{cor:Diff.and.Lipschitz} and the first part of
Lemma~\ref{lem:Taylor of exp} that
\begin{align*}
\bigl| L_\rho(\exp(A),\hat{Q}_n) - L_\rho(\exp(B),\hat{Q}_n)
\bigr| \
&\le\ J(e^\delta,\hat{Q}_n) e^\delta\|\exp(A) - \exp(B)\| \\
&\le\ J(e^\delta,\hat{Q}_n) e^{4\delta} \|A - B\|\vadjust{\goodbreak}
\end{align*}
for $A,B \in K_\delta$, and the Lipschitz constant $J(e^\delta,\hat
{Q}_n) e^{4\delta}$ converges to $J(e^\delta,Q) e^{4\delta}$ in
probability. This implies that
\[
\max_{A \in K_\delta} \,
\bigl| L_\rho(\exp(A),\hat{Q}_n) - L_\rho(\exp(A),Q) \bigr|
\ \to_p \ 0.
\]
In particular,
\[
\epsilon_n(\delta) := \min_{A \in\W: \|A\| = \delta} L_\rho(\exp
(A),\hat{Q}_n)
\ \to_p \
\epsilon(\delta) := \min_{A \in\W: \|A\| = \delta} L(\exp(A),Q)
> 0.\vadjust{\goodbreak}
\]
Whenever $\epsilon_n(\delta) > 0$, we may conclude from
Proposition~\ref{prop:Convexity} the inequality $L_\rho(\exp(A),\hat
{Q}_n) \ge\epsilon_n(\delta) \|A\|/\delta$ for all $A \in\W$ with
$\|A\| \ge\delta$. This shows that $L_\rho(\exp(A),\hat{Q}_n) \to
\infty$ as $\|A\| \to\infty$, so $\hat{Q}_n \in\QQ_\rho$ by
Proposition~\ref{prop:Coercivity} and Theorem~\ref{thm:Uniqueness}.
Moreover, since $L_\rho(\exp(0),\hat{Q}_n) = 0$, we may conclude
that $\bSigma_\rho(\hat{Q}_n) \in\{\exp(A) : A \in K_\delta\}$.
\end{proof}

\begin{proof}[\bf Proof of Theorem~\ref{thm:Diffability}]
According to Theorem~\ref{thm:Consistency}, $\hat{Q}_n \in\QQ_\rho
$ with asymptotic probability one. Thus we may replace $\LL(\hat
{Q}_n)$ with $\LL(\hat{Q}_n \,|\, \hat{Q}_n \in\QQ_\rho)$ and
thus assume that $\hat{Q}_n \in\QQ_n$ almost surely.

As in earlier proofs we define $\W:= \Rqqsym$ in Case~1' and $\W:= \{
A \in\Rqqsym: \tr(A) = 0\}$ in Case~0. Since $G_\rho(\hat{Q}_n)
\in\W$, and since $H_\rho(\hat{Q}_n)$ is a selfadjoint linear
operator on the finite-dimensional space $\W$, both $\|G_\rho(\hat
{Q}_n)\|$ and
\[
\bigl\| H_\rho(\hat{Q}_n) - H_\rho(Q) \bigr\|
\ := \ \max_{A \in\W\,:\, \|A\| \le1}
\bigl\| H_\rho(\hat{Q}_n)A - H_\rho(Q)A \bigr\|
\]
converge to $0$ in probability if, and only if, for arbitrary fixed
$A,B \in\W$,
\[
\langle A, G_\rho(\hat{Q}_n)\rangle
\ \to_p \ \langle A, G_\rho(Q)\rangle= 0
\quad\text{and}\quad
\langle A, H_\rho(\hat{Q}_n)B\rangle
\ \to_p \ \langle A, H_\rho(Q)B\rangle.
\]
But this is a consequence of Lemma~\ref{lem:Mallows convergence}: We
may write $\langle A, G_\rho(\tilde{Q})\rangle= \int g \, d\tilde
{Q}$ and $\langle A, H_\rho(\tilde{Q})B\rangle= \int h \, d\tilde
{Q}$ with
\begin{align*}
g(M) \
&:= \ \tr(A) - \rho'(\tr(M)) \tr(AM), \\
h(M) \
&:= \ \rho'(\tr(M)) \tr(ABM) + \rho''(\tr(M)) \tr(AM) \tr(BM).
\end{align*}
Both $g(M)$ and $h(M)$ are continuous in $M \in\mathbb{Y}$ and satisfy
\begin{align*}
|g(M)| \
&\le\ (q + \psi(\tr(M)) \|A\|, \\
|h(M)| \
&\le\ \bigl( \psi(\tr(M)) + \tr(M)^2 |\rho''(\tr(M))| \bigr) \|
A\|\|B\| \\
&\le\ (2 + \kappa) \psi(\tr(M)) \|A\|\|B\|,
\end{align*}
whence $\int g \, d\hat{Q}_n \to_p \int g \, dQ$ and $\int h \, d\hat
{Q}_n \to_p \int h \, dQ$.

In particular we may conclude that there exist numbers $\delta_n > 0$
such that $\delta_n \to0$ and $\Pr\bigl( \|G_\rho(\hat{Q}_n)\| >
\delta_n \bigr) \to0$. Moreover, with asymptotic probability one,
$H_\rho(\hat{Q}_n)$ is positive definite.

Now we consider $L_\rho(\exp(A),\hat{Q}_n)$ for $A \in\W$ with $\|
A\| \le\sqrt{\delta_n}$: According to Proposition~\ref
{prop:Expansion 2},
\[
L_\rho(\exp(A),\hat{Q}_n)
\ = \ \langle A, G_\rho(\hat{Q}_n)\rangle
+ 2^{-1} H_\rho(A,\hat{Q}_n) + R_{\rho,2}(A,\hat{Q}_n).
\]
But it follows from Proposition~\ref{prop:Expansion 2} that for any
fixed $\delta> 0$,
\[
\sup_{A \in\W: 0 < \|A\| \le\sqrt{\delta_n}}
\frac{|R_{\rho,2}(A,\hat{Q}_n)|}{\|A\|^2}
\ \le\ \Omega(\delta,\hat{Q}_n)/2
+ (\kappa+ 1/7) J(\hat{Q}_n) \delta
\]
as soon as $\sqrt{\delta_n} \le\delta$. But $\Omega(\delta,\tilde
{Q})/2 + (\kappa+ 1/7) J(\tilde{Q}) = \int f_\delta\, d\tilde{Q}$ with
\[
f_\delta(M) \ := \ \sup_{z \in[-\delta,\delta]}
\bigl| \psi_2(e^z \tr(M)) - \psi_2(\tr(M)) \bigr| / 2
+ (\kappa+ 1/7) \psi(\tr(M)) \delta.
\]
This is continuous in $M \in\mathbb{Y}$, and
\[
0 \ \le\ f_\delta(M) \ \le\ (3\kappa/2 + 1/7) \psi(e^\delta\tr(M))
\ \le\ (3\kappa/2 + 1/7) e^{\kappa\delta} \psi(\tr(M)).
\]
Hence we may conclude from Lemma~\ref{lem:Mallows convergence} that
\[
\sup_{A \in\W: 0 < \|A\| \le\sqrt{\delta_n}}
\frac{|R_{\rho,2}(A,\hat{Q}_n)|}{\|A\|^2}
\ \le\ \int f_\delta\, dQ + o_p(1).
\]
But the right hand side converges to $0$ as $\delta\to0$, because
$f_\delta(M) \downarrow 0$ as $\delta\downarrow0$ for any $M \in
\mathbb{Y}$. Hence the left hand side converges to $0$ in probability.

Together with our considerations about $H_\rho(\hat{Q}_n)$ we obtain
the following expansion:
\[
L_\rho(\exp(A),\hat{Q}_n)
\ = \ \langle A, G_\rho(\hat{Q}_n)\rangle
+ 2^{-1} \langle A, H_\rho(Q)A\rangle
+ \hat{\gamma}_n(A) \|A\|^2
\]
where
\[
\hat{\Gamma}_n := \sup_{A \in\W: \|A\| \le\sqrt{\delta_n}}
|\hat{\gamma}_n(A)|
\ \to_p \ 0.
\]
Now we define
\[
\hat{A}_n \ := \ - H_\rho(Q)^{-1} G_\rho(\hat{Q}_n)
\]
and note that $c(Q) \|G_\rho(\hat{Q}_n)\| \le\|\hat{A}_n\| \le C(Q)
\|G_\rho(\hat{Q}_n)\|$ for suitable constants $0 < c(Q) < C(Q)$. If
$\hat{A}_n = 0$, then $\bSigma_\rho(\hat{Q}_n) = I_q$, i.e.\ $\log
(\bSigma_\rho(\hat{Q}_n)) = 0$. Thus we focus on the event $\hat
{A}_n \ne0$. We fix an arbitrary number $\epsilon\in(0,1)$. For any
matrix $A \in\W$ with $\|A - \hat{A}_n\| = \epsilon\|\hat{A}_n\|$,
\begin{align*}
L_\rho(&\exp(A),\hat{Q}_n) - L(\exp(\hat{A}_n),\hat{Q}_n) \\
&= \ 2^{-1} \bigl\langle A - \hat{A}_n, H_\rho(Q)(A - \hat{A}_n)
\bigr\rangle
+ \hat{\gamma}_n(A) \|A\|^2 - \hat{\gamma}_n(\hat{A}_n) \|\hat
{A}_n\|^2.
\end{align*}
Note that $\|A\| \le2 \|\hat{A}_n\|$, and $2 \|\hat{A}_n\| \le\sqrt
{\delta_n}$ with asymptotic probability one. In case of $2 \|\hat
{A}_n\| \le\sqrt{\delta_n}$,
\begin{align*}
\inf_{A \in\W: \|A - \hat{A}_n\| = \epsilon\|\hat{A}_n\|}
&\bigl( L_\rho(\exp(A),\hat{Q}_n) - L(\exp(\hat{A}_n),\hat{Q}_n)
\bigr) \\
&\ge\ \bigl( 2^{-1} \lambda_{\rm min}(H_\rho(Q)) \epsilon^2
- 5 \hat{\Gamma}_n \bigr) \|\hat{A}_n\|^2 \\
&= \ \bigl( 2^{-1} \lambda_{\rm min}(H_\rho(Q)) \epsilon^2
+ o_p(1) \bigr) \|\hat{A}_n\|^2.
\end{align*}
Whenever the right hand side is strictly positive, we may conclude that
\[
\bigl\| \log(\bSigma_\rho(\hat{Q}_n)) - \hat{A}_n \bigr\|
\ \le\ \epsilon\|\hat{A}_n\|
\ \le\ \epsilon C(Q) \|G_\rho(\hat{Q}_n)\|.
\]
These considerations show that $\bigl\| \log(\bSigma_\rho(\hat
{Q}_n)) - \hat{A}_n \bigr\| \le\epsilon C(Q) \|G_\rho(\hat{Q}_n)\|
$ with as\-ymptotic probability one. Since $\epsilon> 0$ is
arbitrarily small, this proves that $\log(\bSigma_\rho(\hat{Q}_n))$
equals $\hat{A}_n + o_p \bigl( \|G_\rho(\hat{Q}_n)\| \bigr)$.
\end{proof}

The proof of Lemma~\ref{lem:OrthInvQ} relies on the following two
propositions involving the Haar distribution on the set of orthogonal
matrices in $\Rqq$. A good reference for Haar distributions in general
is the monograph by Eaton (\citeyear{Eaton_1989}).

\begin{Proposition}
\label{prop:U}
Let $U \in\Rqq$ be a random orthogonal matrix with Haar distribution,
i.e.\ $\LL(U) = \LL(U^\top) = \LL(VU)$ for any fixed orthogonal
matrix $V$. Then for arbitrary indices $i,j,k,\ell,k',\ell' \in\{
1,2,\ldots,q\}$,
\begin{align}
\label{eq:U00}
\Ex(U_{ij}^2 U_{k\ell} U_{k'\ell'})
&= \ 0
\quad\text{if} \ (k,\ell) \ne(k',\ell'), \\
\label{eq:U0}
\Ex(U_{ij}^4) \
&= \ c_{q,0}^{} := \frac{3}{q(q+2)}, \\
\label{eq:U1}
\Ex(U_{ij}^2 U_{i\ell}^2)
= \Ex(U_{ji}^2 U_{\ell i}^2) \
&= \ c_{q,1}^{} := \frac{1}{q(q+2)}
\quad\text{if} \ j \ne\ell, \\
\label{eq:U2}
\Ex(U_{ij}^2 U_{k\ell}^2)
&= \ c_{q,2}^{} := \frac{q+1}{(q-1)q(q+2)}
\quad\text{if} \ i \ne k, j \ne\ell.
\end{align}
\end{Proposition}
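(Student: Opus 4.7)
The plan splits the statement into two parts: first, use sign-flip invariance to establish the vanishing claim \eqref{eq:U00}; second, reduce the remaining moments to a short computation on the unit sphere.

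For \eqref{eq:U00}, I would exploit that $\LL(U)$ is invariant under left multiplication by any orthogonal $V$ and, via $\LL(U) = \LL(U^\top)$, also under right multiplication. Specialising to $V = \diag(s)$ or $W = \diag(t)$ with entries in $\{\pm 1\}$ shows that $\LL(U)$ is invariant under flipping the sign of any single row or column. If $k \ne k'$, taking $s_k = -1$ and $s_m = 1$ for $m \ne k$ leaves $U_{ij}^2$ and $U_{k'\ell'}$ unchanged but negates $U_{k\ell}$, so the expectation equals its own negative. The case $k = k'$, $\ell \ne \ell'$ is handled analogously by flipping the sign of column $\ell$.

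For \eqref{eq:U0}--\eqref{eq:U1}, I rely on each column $U_{\cdot j}$ being uniform on $\mathbb{S}^{q-1}$, hence distributed like $Z/\|Z\|$ with $Z \sim \NN_q(0,I_q)$. Using independence of $Z/\|Z\|$ and $\|Z\|$ together with $\Ex Z_1^4 = 3$, $\Ex(Z_1^2 Z_2^2) = 1$ and $\Ex \|Z\|^4 = q(q+2)$ this will give
\[
	\Ex(U_{ij}^4) \ = \ \frac{\Ex Z_1^4}{\Ex \|Z\|^4} \ = \ \frac{3}{q(q+2)}, \qquad
	\Ex(U_{ij}^2 U_{i\ell}^2) \ = \ \frac{\Ex(Z_1^2 Z_2^2)}{\Ex \|Z\|^4} \ = \ \frac{1}{q(q+2)}
\]
for $j \ne \ell$. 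The row version $\Ex(U_{ji}^2 U_{\ell i}^2)$ has the same value because $\LL(U) = \LL(U^\top)$. As a by-product one also gets $\Ex(U_{ij}^2) = 1/q$, either directly or from $1 = \Ex \sum_{i=1}^q U_{ij}^2 = q\Ex(U_{ij}^2)$.

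Finally, \eqref{eq:U2} is forced by the previous moments and the unit-norm identity $\sum_{k=1}^q U_{k\ell}^2 = 1$. Multiplying by $U_{ij}^2$ with $j \ne \ell$ and taking expectations produces
\[
	\frac{1}{q} \ = \ \Ex(U_{ij}^2) \ = \ c_{q,1} + (q-1) c_{q,2} ,
\]
where the $k=i$ term contributes $c_{q,1}$ (since $j \ne \ell$) and each of the $q-1$ remaining terms contributes $c_{q,2}$ (since $k \ne i$, $j \ne \ell$); solving gives $c_{q,2} = (q+1)/[(q-1)q(q+2)]$. The main obstacle is purely bookkeeping: picking the right sign flip for each configuration in step one, and ensuring the case assumptions $j \ne \ell$ and $i \ne k$ are preserved when the unit-norm constraint is expanded in step three. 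Everything else reduces to elementary Gaussian moments.
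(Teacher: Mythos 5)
Your proposal is correct and follows essentially the same architecture as the paper's proof: sign-flip invariance for \eqref{eq:U00}, sphere-uniformity of a row/column for the pure fourth moments, and unit-norm identities to pin down the mixed moment \eqref{eq:U2} (your relation $1/q = c_{q,1}+(q-1)c_{q,2}$ is just the paper's $1 = qc_{q,1}+q(q-1)c_{q,2}$ divided by $q$). The only variation is cosmetic: you compute \eqref{eq:U0} and \eqref{eq:U1} via the Gaussian representation $Z/\|Z\|$ and the independence of direction and norm, whereas the paper uses the Beta$(1/2,(q-1)/2)$ law of $U_{11}^2$ for \eqref{eq:U0} and then derives \eqref{eq:U1} from a norm constraint; both routes are standard, and like the paper you should note explicitly that permutation invariance of rows and columns is what lets you treat all terms with $k\ne i$, $j\ne\ell$ as equal when expanding the norm identity.
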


\begin{Proposition}
\label{prop:M}
Let $M = U \diag(\lambda) U^\top$ with a fixed vector $\lambda\in
[0,\infty)^q$ and a random orthogonal matrix $U$ as in
Proposition~\ref{prop:U}. Then for any matrix $A = A_0 + A_1$ with
$A_0 \in\W_0, A_1 \in\W_1$,
\[
\Ex(\tr(AM) M) \ = \ c_0(\lambda) A_0 + c_1(\lambda) A_1,
\]
where
\[
c_0(\lambda)
\ = \ \frac{2}{q(q+2)}
\Bigl( \|\lambda\|^2 - \frac{\lambda_+^2 - \|\lambda\|^2}{q-1}
\Bigr)
\quad\text{and}\quad
c_1(\lambda)
\ = \ \frac{\lambda_+^2}{q}
\]
and $\lambda_+ := \sum_{i=1}^q \lambda_i$.
\end{Proposition}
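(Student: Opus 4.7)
The plan is to view $T(A) := \Ex[\tr(AM) M]$ as a linear map on $\Rqqsym$ and exploit its $O(q)$-equivariance arising from Haar invariance of $U$. Writing $M = \sum_{k=1}^q \lambda_k u_k u_k^\top$ with $u_k$ the $k$-th column of $U$, we get
\[
T(A) \ = \ \sum_{k,\ell=1}^q \lambda_k \lambda_\ell \, \Ex \bigl[(u_k^\top A u_k) \, u_\ell u_\ell^\top \bigr] .
\]
Since $\LL(U\Pi) = \LL(U)$ for any permutation matrix $\Pi$, the columns $u_1, \ldots, u_q$ are exchangeable, so the double sum collapses to
\[
T(A) \ = \ \|\lambda\|^2 \, E_1(A) + (\lambda_+^2 - \|\lambda\|^2) \, E_2(A) ,
\]
where $E_1(A) := \Ex[(u_1^\top A u_1) u_1 u_1^\top]$ and $E_2(A) := \Ex[(u_1^\top A u_1) u_2 u_2^\top]$. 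Since $\LL(VU) = \LL(U)$ for any orthogonal $V$, both $E_j$ are $O(q)$-equivariant under conjugation, i.e.\ $E_j(VAV^\top) = V E_j(A) V^\top$, and hence preserve the $O(q)$-invariant decomposition $\Rqqsym = \V_0 \oplus \V_1$.

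On $\V_1$, $E_j(I_q) = \Ex[u_j u_j^\top]$ is $O(q)$-invariant with trace $1$, hence equals $q^{-1} I_q$; substituting gives $T(I_q) = (\lambda_+^2/q) I_q$ and therefore $c_1(\lambda) = \lambda_+^2/q$. On $\V_0$, irreducibility of the traceless symmetric representation of $O(q)$ combined with Schur's lemma forces each $E_j|_{\V_0}$ to act as multiplication by a scalar $\alpha_j$. I would pin down $\alpha_j$ by testing on $A := e_1 e_1^\top - q^{-1} I_q \in \V_0$ and invoking Proposition~\ref{prop:U}:
\begin{align*}
E_1(A)_{11} \ &= \ \Ex[U_{11}^4] - q^{-1} \Ex[U_{11}^2] \ = \ \tfrac{3}{q(q+2)} - \tfrac{1}{q^2} \ = \ \tfrac{2(q-1)}{q^2(q+2)} , \\
E_2(A)_{11} \ &= \ \Ex[U_{11}^2 U_{12}^2] - q^{-1} \Ex[U_{12}^2] \ = \ \tfrac{1}{q(q+2)} - \tfrac{1}{q^2} \ = \ -\tfrac{2}{q^2(q+2)} .
\end{align*}
Dividing by $A_{11} = (q-1)/q$ yields $\alpha_1 = 2/(q(q+2))$ and $\alpha_2 = -2/((q-1)q(q+2))$. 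Assembling $c_0(\lambda) = \|\lambda\|^2 \alpha_1 + (\lambda_+^2 - \|\lambda\|^2) \alpha_2$ and simplifying produces the stated formula.

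The main obstacle is justifying that $E_j|_{\V_0}$ really is a scalar operator. Appealing to irreducibility of the traceless symmetric $O(q)$-representation and Schur's lemma is the cleanest route. For a fully self-contained alternative one may combine equivariance under diagonal sign-flip matrices (which forces $E_j$ to send diagonal matrices to diagonal matrices and rules out any mixing between the diagonal and off-diagonal parts of $A$) with equivariance under coordinate permutations (which constrains the $i$-th diagonal entry of $E_j(A)$ to the form $(\beta_j-\gamma_j) A_{ii} + \gamma_j \tr(A)$, collapsing on $\V_0$ to one free multiplier); a parallel check on the off-diagonal test element $B := e_1 e_2^\top + e_2 e_1^\top$ using \eqref{eq:U1} then confirms that the same multiplier governs the off-diagonal part of $\V_0$.
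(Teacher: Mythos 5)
Your argument is correct and reaches the stated constants, but it takes a genuinely different route from the paper. The paper first reduces to diagonal $A$ via $\LL(V^\top M V) = \LL(M)$ and then computes $\Ex(\tr(AM)M)$ entry by entry through a triple sum over index coincidences, using \eqref{eq:U00}--\eqref{eq:U2} to evaluate each case and rearranging algebraically into the $A_0/A_1$ form. You instead exploit exchangeability of the columns of $U$ to collapse the $\lambda$-dependence into the two universal operators $E_1, E_2$, and then use $O(q)$-equivariance plus irreducibility of the traceless symmetric representation to reduce each $E_j$ to a pair of scalars, which you pin down by evaluating a single matrix entry on well-chosen test elements. Your route is shorter and makes the structural reason for the $c_0 A_0 + c_1 A_1$ form transparent (it is forced by Schur's lemma before any moment is computed), at the cost of importing irreducibility of $\mathrm{Sym}^2_0(\R^q)$ under conjugation by $O(q)$; the paper's computation is longer but entirely self-contained. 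All your moment evaluations check out: $E_1(A)_{11} = \tfrac{2(q-1)}{q^2(q+2)}$, $E_2(A)_{11} = -\tfrac{2}{q^2(q+2)}$, division by $A_{11} = (q-1)/q$ gives $\alpha_1, \alpha_2$, and the assembly reproduces $c_0(\lambda)$; the $\V_1$ part is immediate from $\Ex(u_j^{} u_j^\top) = q^{-1} I_q$. (Note also that $\tr(E_j(A)) = q^{-1}\tr(A)$, which shows directly that $E_j$ maps $\V_0$ into $\V_0$.)

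One small caveat on your ``fully self-contained alternative'': the check that the off-diagonal multiplier agrees with the diagonal one works for $E_1$ via \eqref{eq:U1}, since $E_1(B)_{12} = 2\Ex(U_{11}^2 U_{21}^2)$, but for $E_2$ it requires the mixed moment $\Ex(U_{11}U_{12}U_{21}U_{22})$, which is \emph{not} among the moments listed in Proposition~\ref{prop:U}. It is easily obtained from orthogonality of the first two rows of $U$,
\[
	0 \ = \ \Ex \Bigl( \Bigl( \sum_{\ell=1}^q U_{1\ell} U_{2\ell} \Bigr)^2 \Bigr)
	\ = \ q \, \Ex(U_{11}^2 U_{21}^2) + q(q-1) \, \Ex(U_{11}U_{12}U_{21}U_{22}) ,
\]
whence $\Ex(U_{11}U_{12}U_{21}U_{22}) = -1/((q-1)q(q+2))$, consistent with $\alpha_2$; but you would need to add this step if you want the elementary route to stand on Proposition~\ref{prop:U} alone.
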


\begin{proof}[\bf Proof of Proposition~\ref{prop:U}]
By assumption, $U$ has the same distribution as the random matrix
$\tilde{U} = (\xi_i \zeta_j U_{ij})_{i,j=1}^q$, where $U$, $\xi$
and $\zeta$ are independent with distribution $\xi,\zeta\sim\mathrm
{Unif}(\{-1,1\}^q)$. Hence $U_{ij}^2 U_{k\ell} U_{k'\ell'}$ has the
same distribution as the random product $U_{ij}^2 U_{k\ell} U_{k'\ell
'} \xi_k \xi_{k'} \zeta_\ell\zeta_{\ell'}$. In case of $(k,\ell)
\ne(k',\ell')$, the factor $\xi_k \xi_{k'} \zeta_\ell\zeta_{\ell
'}$ is a random sign, and this implies \eqref{eq:U00}.

As to the remaining equations, note that $U$ has the same distribution
as $U^\top$ and as $\tilde{U} = (U_{\pi(i)\sigma(j)})_{i,j=1}^q$
for arbitrary permutations $\pi, \sigma$ of $\{1,2,\ldots,q\}$.
Hence it suffices to show that
\begin{align}
\label{eq:U0'}
\Ex(U_{11}^4) \
&= \ \frac{3}{q(q+2)}, \\
\label{eq:U1'}
\Ex(U_{11}^2 U_{12}^2)
&= \ \frac{1}{q(q+2)}, \\
\label{eq:U2'}
\Ex(U_{11}^2 U_{22}^2)
&= \ \frac{q+1}{(q-1)q(q+2)}.
\end{align}
Any row or column of $U$ is uniformly distributed on the unit sphere of
$\R^q$, and this implies that $U_{11}^2 \sim\mathrm{Beta}(a,b)$ with
$a = 1/2$, $b = (q-1)/2$. Hence \eqref{eq:U0'} follows from
\[
\Ex(U_{11}^4)
\ = \frac{a(a+1)}{(a+b)(a+b+1)}
\ = \ \frac{3}{q(q+2)}.
\]
Now we utilize the fact that all rows of $U$ are unit vectors. Hence
\begin{align*}
1
\ = \ \Ex\Bigl( \Bigl( \sum_{j=1}^q U_{1j}^2 \Bigr)^2 \Bigr)
\ = \ \sum_{j,\ell=1}^q \Ex(U_{1j}^2 U_{1\ell}^2)
\
&= \ q \Ex(U_{11}^4) + q(q-1) \Ex(U_{11}^2 U_{12}^2) \\
&= \ \frac{3}{q+2} + q(q-1) \Ex(U_{11}^2 U_{12}^2),
\end{align*}
so
\[
\Ex(U_{11}^2 U_{12}^2)
\ = \ \frac{1 - 3/(q+2)}{q(q-1)}
\ = \ \frac{1}{q(q+2)},
\]
which is \eqref{eq:U1'}. Similarly we deduce \eqref{eq:U2'}:
\begin{align*}
1
\ = \ \Ex\Bigl( \sum_{j=1}^q U_{1j}^2 \sum_{\ell=1}^q U_{2\ell}^2
\Bigr)
\ &= \ \sum_{j,\ell=1}^q \Ex(U_{1j}^2 U_{2\ell}^2)\\[2pt]
\
&= \ q \Ex(U_{11}^2 U_{12}^2) + q(q-1) \Ex(U_{11}^2 U_{22}^2) \\[2pt]
&= \ \frac{1}{q+2} + q(q-1) \Ex(U_{11}^2 U_{22}^2),
\end{align*}
so
\[
\Ex(U_{11}^2 U_{22}^2)
\ = \ \frac{1 - 1/(q+2)}{q(q-1)}
\ = \ \frac{q+1}{(q-1)q(q+2)}.
\]
\\[-5ex]
\end{proof}

\begin{proof}[\bf Proof of Proposition~\ref{prop:M}]
Suppose first that $A = \diag(a)$ for some $a \in\R^q$. Denoting the
columns of $U$ with $U_1, U_2, \ldots, U_q$, we may write
\begin{align*}
\Ex(\tr(AM) M) \
&= \ \sum_{j=1}^q \lambda_j \Ex(U_j^\top A U_j^{} U \diag(\lambda)
U^\top) \\[2pt]
&= \ \sum_{i,j=1}^q a_i \lambda_j \Ex(U_{ij}^2 U \diag(\lambda)
U^\top) \\[2pt]
&= \ \sum_{i,j,\ell=1}^q a_i \lambda_j \lambda_\ell
\Ex\bigl( U_{ij}^2 (U_{k\ell} U_{k'\ell})_{k,k'=1}^q \bigr).
\end{align*}
It follows from Proposition~\ref{prop:U} that
\begin{align*}
\Ex\bigl(
&U_{ij}^2 (U_{k\ell} U_{k'\ell})_{k,k'=1}^q \bigr) \\[2pt]
&= \ \diag\Bigl( \bigl( \Ex(U_{ij}^2 U_{k\ell}^2 \bigr)_{k=1}^q
\Bigr) \\[2pt]
&= \ \diag\Bigl(
\bigl( 1_{[i=k, j=\ell]}^{} c_{q,0}^{}
+ 1_{[i=k, j\ne\ell]}^{} c_{q,1}^{}
+ 1_{[i\ne k,j=\ell]}^{} c_{q,1}^{}
+ 1_{[i\ne k,j\ne\ell]}^{} c_{q,2}^{} \bigr)_{k=1}^q \Bigr).
\end{align*}
Consequently,
\[
\Ex(\tr(AM) M) \ = \ \diag(\gamma_1, \gamma_2, \ldots, \gamma_q)
\]
with $\gamma_k$ given by\vadjust{\eject}
\begin{align*}
&\sum_{i,j,\ell=1}^q a_i \lambda_j \lambda_\ell
\bigl( 1_{[i=k, j=\ell]}^{} c_{q,0}^{}
+ 1_{[i=k, j\ne\ell]}^{} c_{q,1}^{}
+ 1_{[i\ne k,j=\ell]}^{} c_{q,1}^{}
+ 1_{[i\ne k,j\ne\ell]}^{} c_{q,2}^{} \bigr) \\
&\quad{}= a_k \|\lambda\|^2 c_{q,0}^{}
+ a_k (\lambda_+^2 - \|\lambda\|^2) c_{q,1}^{} \\
&\qquad{}
+ (q \bar{a} - a_k) \|\lambda\|^2 c_{q,1}^{}
+ (q \bar{a} - a_k) (\lambda_+^2 - \|\lambda\|^2) c_{q,2}^{} \\
&\quad{}= \bigl( \|\lambda\|^2 (c_{q,0}^{} - c_{q,1}^{})
+ (\lambda_+^2 - \|\lambda\|^2) (c_{q,1}^{} - c_{q,2}^{}) \bigr)
\cdot a_k \\
&\qquad{}
+ \bigl( \|\lambda\|^2 q c_{q,1}^{}
+ (\lambda_+^2 - \|\lambda\|^2) q c_{q,2}^{} \bigr)
\cdot\bar{a} \\
&\quad{}=  \bigl( \|\lambda\|^2 (c_{q,0}^{} - c_{q,1}^{})
+ (\lambda_+^2 - \|\lambda\|^2) (c_{q,1}^{} - c_{q,2}^{}) \bigr)
\cdot(a_k - \bar{a}) \\
&\qquad{}
+ \bigl( \|\lambda\|^2 (c_{q,0}^{} + (q-1) c_{q,1}^{})
+ (\lambda_+^2 - \|\lambda\|^2) (c_{q,1}^{} + (q-1) c_{q,2}^{}) \bigr)
\cdot\bar{a} \\
&\quad{}=  \frac{2}{q(q+2)}
\Bigl( \|\lambda\|^2 - \frac{\lambda_+^2 - \|\lambda\|^2}{q-1}
\Bigr)
\cdot(a_k - \bar{a})
+ \frac{\lambda_+^2}{q} \cdot\bar{a},
\end{align*}
where $\lambda_+ := \sum_{i=1}^q \lambda_i$ and $\bar{a} := q^{-1}
\sum_{i=1}^q a_i$. Hence
\[
\Ex(\tr(AM) M) \ = \ c_0(\lambda) \diag((a_k - \bar{a})_{k=1}^q)
+ c_1(\lambda) \bar{a} I_q
\]
with $c_0(\lambda), c_1(\lambda)$ as stated.

In general let $A = V \diag(a) V^\top$ with an orthogonal matrix $V
\in\Rqq$. Then $A_0 = V \diag((a_k - \bar{a})_{k=1}^q) V^\top$ and
$A_1 = \bar{a} I_q$, so
\begin{align*}
\Ex(\tr(AM) M) \
&= \ V \Ex\bigl( \tr(\diag(a) V^\top M V) V^\top M V \bigr) V^\top
\\
&= \ V \bigl( c_0(\lambda) \diag((a_k - \bar{a})_{k=1}^q)
+ c_1(\lambda) \bar{a} I_q \bigr) V^\top\\
&= \ c_0(\lambda) A_0 + c_1(\lambda) A_1,
\end{align*}
because $\LL(V^\top M V) = \LL\bigl( (V^\top U) \diag(\lambda)
(V^\top U)^\top\bigr) = \LL(M)$.
\end{proof}

\begin{proof}[\bf Proof of Lemma~\ref{lem:OrthInvQ}]
Let $M \sim Q$ and $U$ be independent, where $U$ is a random orthogonal
matrix as in Proposition~\ref{prop:U}. If we write $M = V \diag
(\Lambda) V^T$ with a random orthogonal matrix $V \in\Rqq$ and a
random vector $\Lambda\in[0,\infty)^q$, then
\[
\LL(M)
= \LL(U V \diag(\Lambda) V^\top U^\top)
= \LL\bigl( (UV) \diag(\Lambda) (UV)^\top\bigr)
= \LL(U \diag(\Lambda) U^\top),
\]
where the first step follows from orthogonal invariance of $Q$ and the
last step follows after conditioning on $(\Lambda,V)$ and utilizing
the fact that $\LL(UV) = \LL(U)$. Consequently, we may and do assume
that $M = U \diag(\Lambda) U^\top$. Then, by Proposition~\ref{prop:M},
\begin{align*}
H_\rho(Q) A \
&= \ A + \Ex\bigl( \rho''(\tr(M)) \tr(AM) M \bigr) \\
&= \ A + \Ex\bigl( \rho''(\Lambda_+) \tr(AM) M \bigr) \\
&= \ A + \Ex\bigl( \rho''(\Lambda_+) \Ex(\tr(AM) M \,|\, \Lambda
) \bigr) \\
&= \ A + \Ex\bigl( \rho''(\Lambda_+)
\bigl( c_0(\Lambda) A_0 + c_1(\Lambda) A_1 \bigr) \bigr) \\
&= \ \bigl( 1 + \Ex\bigl( \rho''(\Lambda_+) c_0(\Lambda) \bigr)
\bigr) A_0
+ \bigl( 1 + \Ex\bigl( \rho''(\Lambda_+) c_1(\Lambda) \bigr)
\bigr) A_1.
\end{align*}
Now the assertion follows from the explicit formula for $c_0(\Lambda),
c_1(\Lambda)$ and the fact that $\Lambda_+ = \tr(M)$ and $\|\Lambda
\|^2 = \|M\|_F^2$.
\end{proof}

\begin{proof}[\bf Proof of Theorem~\ref{thm:Consistency S12}]
Note first that the nonrandom distributions $Q_n$ satisfy the
conditions of Theorem~\ref{thm:Continuity}: It follows from $P_n \to
_w P$ that $P_n^{\otimes k} = \LL(X_{n1},\ldots,X_{nk})$ converges
weakly to $P^{\otimes k} = \LL(X_1,\ldots,X_k)$, where $X_1,\ldots
,X_k$ are independent with distribution $P$. Since the mappings $\R^q
\ni x \mapsto xx^\top\in\Rqqsympsd$ and $(\R^q)^\ell\ni
(x_1,\ldots,x_\ell) \mapsto S(x_1,\ldots,x_\ell)$, $\ell\ge2$,
are continuous, $Q_n \to_w Q$ by the Continuous Mapping Theorem. As to
Condition~\eqref{eq:Continuity}, note first that for $x \in\R^q$,
\[
\psi(\lambda_o \tr(xx^\top))
\ \le\ \lambda_o^\kappa\psi(\|x\|^2)
\]
and for $\ell\ge2$ points $x_1,\ldots,x_\ell\in\R^q$,
\[
\psi\bigl( \lambda_o \tr(S(x_1,\ldots,x_\ell)) \bigr)
\ \le\ \lambda_o^\kappa(1 - 1/\ell)^{-\kappa}
\sum_{i=1}^\ell\psi(\|x_\ell\|^2),
\]
see also the derivation of \eqref{eq:From.P.to.Q} and Lemma~\ref
{lem:psi.and.kappa}. Hence we may apply Lemma~\ref{lem:Mallows
convergence} with the non-random triple $\bigl( (\R^q)^k,
P_n^{\otimes k}, P^{\otimes k} \bigr)$ in place of $(\mathbb{Y}, \hat
{Q}_n, Q)$ and the function $\phi(x_1,\ldots,x_k) := \sum_{i=1}^k
\psi(\|x_i\|^2)$ to show that under our additional assumptions with $m
= 1$,
\[
\int\psi(\lambda_o \tr(M)) \, Q_n(dM)
\ \to\ \int\psi(\lambda_o \tr(M)) \, Q(dM).
\]

Now we show that the random distributions $\hat{Q}_n$ satisfy
Conditions~\eqref{eq:Consistency 1} and \eqref{eq:Consistency 2} in
Theorem~\ref{thm:Consistency}. Because of the preceding considerations
for $(Q_n)_n$, it suffices to show that
\begin{equation}
\label{eq:Consistency S12}
\Ex\Bigl| \int g \, d(\hat{Q}_n - Q_n) \Bigr| \ \to\ 0
\end{equation}
whenever $g : \mathbb{Y} \to\R$ is a bounded measurable function or
$g(M) = \phi(M) := \psi(\lambda_o \tr(M))$.

In both cases the expected value of $\int g \, d\hat{Q}_n$ equals
$\int g \, dQ_n \in\R$. Consequently, if $g$ is bounded, then
\[
\Ex\Bigl| \int g \, d(\hat{Q}_n - Q_n) \Bigr|
\ \le\ \Bigl( \Var\Bigl( \int g \, d\hat{Q}_n \Bigr) \Bigr)^{1/2}
\ \le\ \|g\|_\infty/ \sqrt{n/k}.
\]
In case of $k = 1$, the latter inequality follows from the well-known identity
\[
\Var\bigl( \int g \, d\hat{Q}_n \bigr)
\ = \ \Var(g(X_{n1}^{}X_{n1}^\top)) / n
\ \le\ \|g\|_\infty^2/n.
\]
For $k \ge2$ it follows from inequalities by Hoeffding (\citeyear
{Hoeffding_1948}) for $U$-statistics, see also Dudley (\citeyear
{Dudley_2002}, Section~11.9). This proves \eqref{eq:Consistency S12}
for bounded $g$.

In case of $g = \phi$ we fix an arbitrary $R > 0$ and write
\begin{align*}
\Ex\Bigl| \int\phi\, d(\hat{Q}_n - Q_n) \Bigr| \
&\le\ 2 \int(\phi- R)^+ \, dQ_n
+ \Ex\Bigl| \int\min\{\phi,R\} \, d(\hat{Q}_n - Q_n) \Bigr| \\
&\le\ 2 \int(\phi- R)^+ \, dQ_n
+ R / \sqrt{n/k} \\
&\to\ 2 \int(\phi- R)^+ \, dQ,
\end{align*}
because $(\phi- R)^+ = \phi- \min\{\phi,R\}$. This implies
Condition~\eqref{eq:Consistency 2}, because the limit $\int(\phi-
R)^+ \, dQ$ tends to $0$ as $R \to\infty$.
\end{proof}

\begin{proof}[\bf Proof of Theorem~\ref{thm:CLT1}]
As in the proof of Theorem~\ref{thm:Consistency S12} it can be shown that
\[
\int\psi(\tr(M))^\ell\, Q_n(dM)
\ \to\ \int\psi(\tr(M))^\ell\, Q(dM)
\quad\text{for} \ \ell= 1,2,
\]
and that the random distributions $\hat{Q}_n$ satisfy
Conditions~\eqref{eq:Consistency 1} and \eqref{eq:Consistency 2'}.
Hence Theorem~\ref{thm:Diffability} implies that $\hat{Q}_n \in\QQ
_\rho$ with asymptotic probability one, and
\[
\sqrt{n} \log(\bSigma_{\rho}(\hat{Q}_n))
\ = \ H_\rho(Q)^{-1} \bigl( - \sqrt{n} G_\rho(\hat{Q}_n) \bigr)
+ o_p \bigl( \sqrt{n} \|G_\rho(\hat{Q}_n)\| \bigr).
\]
Thus we have to analyze the random matrix
\[
\tilde{W}_n \ := \ - \sqrt{n} G_\rho(\hat{Q}_n)
\ = \ \sqrt{n} \int\bigl( \rho'(\tr(M)) M - I_q \bigr) \, \hat{Q}_n(dM)
\ \in\ \W
\]
in more detail.

In case of $k = 1$ the random matrix $\tilde{W}_n$ equals
\[
\frac{1}{\sqrt{n}} \sum_{i=1}^n \tilde{Z}(X_{ni})
\quad\text{with}\quad
\tilde{Z}(x) \ := \ \rho'(\|x\|^2) xx^\top- I_q
\]
for $x \in\mathbb{X}$. Here $\Ex\tilde{Z}(X_{n1}) = G_\rho(Q_n) =
0$ and $\|\tilde{Z}(\cdot)\|_F \le\psi(\|x\|^2) + \sqrt{q}$. This
implies that $\tilde{W}_n = O_p(1)$. Moreover, continuity of $\rho'$
on $(0,\infty)$ and of $\psi$ on $[0,\infty)$ in Case~1' with $\psi
(0) = 0$ implies that $\tilde{Z} : \mathbb{X} \to\Rqqsym$ is continuous.

In case of $k \ge2$ we may write
\[
\tilde{W}_n
\ = \ \sqrt{n} \binom{n}{k}^{-1} \sum_{1 \le i_1 < \cdots< i_k \le n}
M(X_{ni_1},\ldots,X_{ni_k})
\]
with
\[
M(x_1,\ldots,x_k)
\ := \ \rho' \bigl( \tr(S(x_1,\ldots,x_k)) \bigr) S(x_1,\ldots
,x_k) - I_q.
\]
In Case~0, we define $M(x_1,\ldots,x_k) := 0$ whenever $S(x_1,\ldots
,x_k) = 0$. Here
\begin{align}
\nonumber
\| M(x_1,\ldots,x_k) \|_F \
&\le\ \psi\bigl( \tr(S(x_1,\ldots,x_k)) \bigr) + \sqrt{q} \\
\label{ineq:CLT1}
&\le\ (k/(k-1))^\kappa\sum_{i=1}^k \|x_i\|^2 + \sqrt{q},
\end{align}
and $\Ex M(X_{n1},\ldots,X_{nk}) = G_\rho(Q_n) = 0$. Hence standard
considerations for \mbox{$U$-statistics} as in Dudley (\citeyear
{Dudley_2002}, Section~11.9), with straightforward extensions to
vector- or matrix-valued ones, imply that
\[
\tilde{W}_n \ = \ \frac{1}{\sqrt{n}} \sum_{i=1}^n \tilde
{Z}_n(X_{ni}) + o_p(1)
\ = \ O_p(1),
\]
where
\[
\tilde{Z}_n(x) \ := \ k \Ex M(x,X_{n2},\ldots,X_{nk}) \ = \ k \Ex
(M(X_{n1},X_{n2},\ldots,X_{nk})|X_{n1}=x)
\]
satisfies $\Ex\tilde{Z}_n(X_{n1}) = 0$. In addition we define
\[
\tilde{Z}(x) \ := \ k \Ex M(x,X_2,\ldots,X_k).
\]
We may conclude from \eqref{ineq:CLT1}, continuity of $\rho'$ on
$(0,\infty)$ and of $\psi$ on $[0,\infty)$ in Case~1' and dominated
convergence that both functions $\tilde{Z}_n$ and $\tilde{Z}$ are
continuous on $\R^q$. Further there exists a constant $C$ such that
\[
\|\tilde{Z}_n(x)\|_F, \|\tilde{Z}(x)\|_F \ \le\ C + C \psi(\|x\|^2)
\]
for all $n \ge k$ and $x \in\mathbb{X}$. Thus it suffices show that
\begin{align*}
\Ex\biggl( \Bigl\| &
\frac{1}{\sqrt{n}} \sum_{i=1}^n \tilde{Z}_n(X_{ni})
- \frac{1}{\sqrt{n}} \sum_{i=1}^n
\bigl( \tilde{Z}(X_{ni}) - \Ex\tilde{Z}(X_{n1}) \bigr)
\Bigr\|_F^2 \biggr) \\
&\le\ \Ex\bigl( \bigl\|
\tilde{Z}_n(X_{n1}) - \tilde{Z}(X_{n1}) \bigr\|_F^2 \bigr)
\ \to\ 0.
\end{align*}

To this end we use a well-known result about weak convergence and
almost surely convergent representations (Skorohod, \citeyear
{Skorohod_1956}; Dudley, \citeyear{Dudley_1968}): There exists a
probability space $(\Omega_o,\mathcal{A}_o,\Pr_o)$ with random
variables $Y \sim P$ and $Y_n \sim P_n$ for $n \ge k$ such that $Y_n
\to Y$ almost surely. Now we define $(\Omega,\mathcal{A},\Pr) :=
(\Omega_o^k, \mathcal{A}_o^{\otimes k}, \Pr_o^{\otimes k})$ and
$X_i(\omega) = Y(\omega_i)$, $X_{ni}(\omega) := Y_n(\omega_i)$ for
$1 \le i \le k$, $n \ge k$ and $\omega= (\omega_i)_{i=1}^k \in\Omega
$. This construction implies that $(X_{ni})_{i=1}^k \to(X_i)_{i=1}^k$
almost surely. With $\mathcal{A}_*$ denoting the $\sigma$-field
generated by $X_1$ and $(X_{n1})_{n \ge k}$ we may write
\[
\tilde{Z}_n(X_{n1}) - \tilde{Z}(X_{n1})
\ = \ \Ex(\tilde{V}_n \,|\, \mathcal{A}_*)
\]
with
\[
\tilde{V}_n \ := \ M(X_{n1},X_{n2},\ldots,X_{nk}) -
M(X_{n1},X_2,\ldots,X_k),
\]
and
\[
\Ex\bigl( \bigl\| \tilde{Z}_n(X_{n1}) - \tilde{Z}(X_{n1}) \bigr\|
_F^2 \bigr)
\ = \ \Ex\bigl( \bigl\|
\Ex(\tilde{V}_n \,|\, \mathcal{A}_*) \bigr\|_F^2 \bigr)
\ \le\ \Ex\bigl( \|\tilde{V}_n\|_F^2 \bigr).
\]
But $\tilde{V}_n \to0$ almost surely, and
\[
\|\tilde{V}_n\|_F^2 \ \le\ B_n := C' \sum_{i=1}^k
\bigl( \psi(\|X_{ni}\|^2)^2 + \psi(\|X_i\|^2)^2 \bigr)
\]
for a suitable constant $C'$. Furthermore, $B_n \to B := 2 C' \sum
_{i=1}^k \psi(\|X_i\|^2)^2$ almost surely, and $\Ex(B_n) \to\Ex(B)
< \infty$. Hence for any fixed $R > 0$,
\[
\Ex\bigl( \|\tilde{V}_n\|_F^2 \bigr)
\ \le\ \Ex\bigl( \min\{\|\tilde{V}_n\|_F^2,R\} \bigr)
+ \Ex((B_n - R)^+)
\ \to\ \Ex((B - R)^+),
\]
and the right hand side tends to $0$ as $R \to\infty$.
\end{proof}

For the proof Remark~\ref{rem:CLT1.R1} we need an elementary fact
about symmetric matrices:

\begin{Proposition}
\label{prop:half-invariant M}
Let $M \in\Rqqsym$ and $x \in\R^q$ such that
\[
BMB^\top\ = \ M \quad
\text{for any orthogonal} \ B \in\Rqq\ \text{with} \ Bx = x.
\]
Then there exist real numbers $\gamma, \beta$ such that
\[
M \ = \ \gamma\, xx^\top+ \beta\, I_q.
\]
\end{Proposition}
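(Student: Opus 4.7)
The plan is to split into two cases depending on whether $x=0$, reducing the second case to the first.

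\textbf{Case $x = 0$.} Here the hypothesis is that $BMB^\top = M$ for every orthogonal $B \in \Rqq$. I would diagonalize $M = U D U^\top$ with $D = \diag(\lambda(M))$ and $U$ orthogonal. Replacing $B$ by $U B' U^\top$, the condition becomes $B' D {B'}^\top = D$ for every orthogonal $B'$. Taking $B'$ to be the permutation matrix that swaps coordinates $i$ and $j$ shows $\lambda_i(M) = \lambda_j(M)$ for all $i,j$, so $M = \kappa I_q$ for some $\kappa \in \R$. Then the representation $M = \gamma xx^\top + \kappa I_q$ holds with $\gamma := 0$.

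\textbf{Case $x \ne 0$.} Set $e_1 := x/\|x\|$ and extend to an orthonormal basis $e_1, e_2, \ldots, e_q$ of $\R^q$. In this basis $M$ has the block form
\[
    M \ = \ \begin{bmatrix} a & v^\top \\ v & M_W \end{bmatrix},
    \qquad a \in \R, \ v \in \R^{q-1}, \ M_W \in \R_{\rm sym}^{(q-1)\times(q-1)}.
\]
The stabilizer of $x$ in the orthogonal group consists precisely of block matrices $B = \diag(1, U)$ with $U \in \R^{(q-1)\times(q-1)}$ orthogonal, because such $B$ fixes $\spa(x)$ pointwise and acts arbitrarily on $\spa(x)^\perp$. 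Computing
\[
    BMB^\top \ = \ \begin{bmatrix} a & v^\top U^\top \\ Uv & U M_W U^\top \end{bmatrix},
\]
the invariance hypothesis forces $Uv = v$ and $U M_W U^\top = M_W$ for every orthogonal $U$. The first identity yields $v = 0$ (choose $U = -I_{q-1}$, say). Applying Case~$x=0$ to $M_W$ yields $M_W = \kappa I_{q-1}$ for some $\kappa \in \R$. Therefore
\[
    M \ = \ a \, e_1 e_1^\top + \kappa(I_q - e_1 e_1^\top)
    \ = \ \frac{a - \kappa}{\|x\|^2} \, xx^\top + \kappa I_q ,
\]
and setting $\gamma := (a - \kappa)/\|x\|^2$ completes the proof.

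The argument is essentially a pair of elementary linear algebra reductions; the only point that requires care is the correct identification of the stabilizer of $x$ with block-diagonal orthogonal matrices, which is where the orthonormal basis $e_1 = x/\|x\|, e_2, \ldots, e_q$ is crucial. No serious obstacle is expected.
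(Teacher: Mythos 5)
Your proof is correct, and it is essentially the same argument as the paper's: both exploit the fact that the stabilizer of $x$ in the orthogonal group acts as the full orthogonal group on $x^\perp$, using specific reflections/sign flips to kill the cross terms and swaps/permutations to equalize the eigenvalues on $x^\perp$. Your block-matrix organization (reducing $x\ne 0$ to the fully invariant case on $x^\perp$) is just a cleaner packaging of the same reductions.
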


\begin{proof}[\bf Proof of Proposition~\ref{prop:half-invariant M}]
Let $u \in x^\perp$ with $\|u\| = 1$. Then $B := I_q - 2 uu^\top$
defines an orthogonal matrix such that $B^\top= B$, $Bx = x$ and $Bu =
-u$. Consequently,
\[
u^\top Mx \ = \ u^\top BMB^\top x
\ = \ (Bu)^\top M (Bx) \ = \ - u^\top M x.
\]
Hence $Mx \perp x^\perp$ which is equivalent to $Mx = \lambda x$ for
some $\lambda\in\R$. In particular, $M(x^\perp) \subset x^\perp$.

Next let $u$ and $v$ be unit vectors in $x^\perp$ such that $u^\top v
= 0$ and $Mu = \beta_u u$, $Mv = \beta_v v$ for real numbers $\beta
_u,\beta_v$. Then $B := I_q - uu^\top- vv^\top+ uv^\top+ vu^\top$
defines an orthogonal matrix $B$ such that $B^\top= B$, $Bx = x$, $Bu
= v$ and $Bv = u$. Consequently,
\[
\beta_u \ = \ u^\top M u \ = \ (Bu)^\top M (Bu) \ = \ v^\top M v \ = \
\beta_v.
\]
Consequently, there exists a real number $\beta$ such that $My = \beta
y$ for all $y \in x^\perp$.

All in all we obtain the representation $M = \gamma\, xx^\top+ \beta
\,I_q$, where $\gamma= \lambda\|x\|^{-2} - \beta$ in case of $x \ne0$.
\end{proof}

\begin{proof}[\bf Proof of Remark~\ref{rem:CLT1.R1}]
Spherical symmetry of $P$ implies that $Q$ is orthogonally invariant.
Hence Lemma~\ref{lem:OrthInvQ} applies to $H_\rho(Q)$, and it
suffices to show that $\tilde{Z}(x) = \gamma(\|x\|^2) xx^\top+ \beta
(\|x\|^2) I_q$ with certain real numbers $\gamma(\|x\|^2)$ and $\beta
(\|x\|^2)$. But this is a consequence of Proposition~\ref
{prop:half-invariant M}: For any orthogonal matrix $B \in\Rqq$,
\[
S(Bx,BX_2,\ldots,BX_k) \ = \ B S(x,X_2,\ldots,X_k) B^\top,
\]
so it follows from $\LL(BX_j) = \LL(X_j)$ for $2 \le j \le k$ that
\[
\tilde{Z}(Bx)
= \Ex M(Bx,BX_2,\ldots,BX_k)
= B \Ex M(x,X_2,\ldots,X_k) B^\top
= B \tilde{Z}(x) B^\top.
\]
Restricting our attention temporarily to matrices $B$ such that $Bx =
x$ reveals that
\[
\tilde{Z}(x) \ = \ \tilde{\gamma}(x) xx^\top+ \tilde{\beta}(x) I_q
\]
with certain numbers $\tilde{\gamma}(x)$ and $\tilde{\beta}(x)$.
But for arbitrary orthogonal $B \in\Rqq$,
\[
\tilde{Z}(Bx) \ = \
\begin{cases}
\tilde{\gamma}(Bx) (Bx)(Bx)^\top+ \tilde{\beta}(Bx) I_q
\ = \ B \bigl( \tilde{\gamma}(Bx) xx^\top+ \tilde{\beta}(Bx) I_q
\bigr) B^\top, \\
B \tilde{Z}(x) B^\top
\ = \ B \bigl( \tilde{\gamma}(x) xx^\top+ \tilde{\beta}(x) I_q
\bigr) B^\top,
\end{cases}
\]
whence
\[
\tilde{\gamma}(Bx) xx^\top+ \tilde{\beta}(Bx) I_q
\ = \ \tilde{\gamma}(x) xx^\top+ \tilde{\beta}(x) I_q.
\]
Multiplying the latter equation with $y^\top$ from the left and with
$y$ from the right, where $0 \ne y \in x^\perp$, reveals that $\tilde
{\beta}(Bx) = \tilde{\beta}(x)$, i.e.\ $\tilde{\beta}(x) = \beta
(\|x\|^2)$. Then multiplication with $x^\top$ from the left and $x$
from the right reveals that $\tilde{\gamma}(Bx) = \tilde{\gamma
}(x)$, i.e.\ $\tilde{\gamma}(x) = \gamma(\|x\|^2)$.
\end{proof}

\begin{proof}[\bf Proof of Remark~\ref{rem:CLT1.R2}]
If $P$ is spherically symmetric around $0$, we may represent a random
vector $X \sim P$ as $X = RU$ with independent random variables $R \ge
0$ and $U \in\R^q$, where $U$ is uniformly distributed on the unit
sphere of $\R^q$.

In case of $\nu= 0$ (Case~0) we know already that
\[
H_\rho(Q) A \ = \ \frac{q}{q+2} \, A_0
\]
for any $A = A_0 + a I_q \in\Rqqsym$, where $a = q^{-1} \tr(A)$ and
$\tr(A_0) = 0$. Hence
\begin{align*}
Z(x) \
&= \ H_\rho(Q)^{-1} \bigl( q \|x\|^{-2} xx^\top- I_q \bigr)
\ = \ \frac{q}{\|x\|^2} H_\rho(Q)^{-1} A_0(x)
\ = \ \frac{q+2}{\|x\|^2} A_0(x) \\
&= \ (\nu+ \|x\|^2)^{-1} \bigl( c_0 A_0(x) + c_1 a(x) I_q \bigr)
\end{align*}
with $\nu= 0$ and $c_0, c_1$ as stated. Note that $c_1 = 0$ when $\nu
= 0$.

In case of $\nu> 0$ (Case~1'), Proposition~\ref{prop:M}, applied with
$\lambda= (1,0,\ldots,0)^\top$, entails that $A = A_0 + a I_q$ as
above is mapped to
\begin{align*}
H_\rho(Q)A \
&= \ A - \Ex\Bigl( \frac{(\nu+ q)R^4}{(\nu+ R^2)^2} \, U^\top A U
\, UU^\top\Bigr) \\
&= \ A - \Ex\Bigl( \frac{(\nu+ q) R^4}{(\nu+ R^2)^2} \Bigr)
\Bigl( \frac{2}{q(q+2)} \, A_0 + \frac{1}{q} \, a I_q \Bigr) \\
&= \ \Bigl( 1 - \frac{2(q - \nu+ \beta\nu)}{q(q+2)} \Bigr) \, A_0
+ \Bigl( 1 - \frac{q - \nu+ \beta\nu}{q} \Bigr) a I_q \\
&= \ \frac{q + 2\nu(1 - \beta)/q}{q+2} \, A_0
+ \frac{\nu(1 - \beta)}{q} \, a I_q,
\end{align*}
because
\[
\Ex\Bigl( \frac{(\nu+ q) R^4}{(\nu+ R^2)^2} \Bigr)
\ = \ \Ex\Bigl(
\frac{(\nu+ q)(R^2 - \nu)}{\nu+ R^2}
+ \frac{(\nu+ q)\nu^2}{(\nu+ R^2)^2} \Bigr)
\ = \ q - (1 - \beta)\nu
\]
by the definition of $\beta$ and since $\Sigma_\rho(Q)=I_q$. Note
that the latter implies the equations $\Ex(R^2/(\nu+R^2)) = q/(\nu
+q)$ and $\Ex(1/(\nu+R^2)) = 1/(\nu+q)$.
Consequently,
\[
H_\rho(Q)^{-1}A
\ = \ \frac{q+2}{q + 2(1 - \beta)\nu/q} \, A_0
+ \frac{q}{(1 - \beta)\nu} \, a I_q.
\]
This yields the representation
\begin{align*}
Z(x) \
&= \ H_\rho(Q)^{-1} \bigl( \rho'(\|x\|^2) xx^\top- I_q \bigr) \\
&= \ (\nu+ \|x\|^2)^{-1} H_\rho(Q)^{-1} \bigl( (\nu+ q) (A_0(x) +
a(x) I_q + I_q)
- (\nu+ \|x\|^2) I_q \bigr) \\
&= \ (\nu+ \|x\|^2)^{-1} H_\rho(Q)^{-1} \bigl( (\nu+ q) A_0(x) +
\nu a(x) I_q \bigr) \\
&= \ (\nu+ \|x\|^2)^{-1}
\Bigl( \frac{(\nu+ q)(q + 2)}{q + 2(1 - \beta)\nu/q} \, A_0(x)
+ \frac{q}{1 - \beta} \, a(x) I_q \Bigr) \\
&= \ (\nu+ \|x\|^2)^{-1}
\bigl( c_0 A_0(x) + c_1 a(x) I_q \bigr)
\end{align*}
with $c_0$ and $c_1$ as stated.
\end{proof}


\subsection{Proofs for Section~\ref{sec:Location and Scatter}}

\begin{proof}[\bf Proof of Theorem~\ref{thm:MTLS}]
Note that $\tilde{L}(\cdot,\tilde{P})$ is equal to the scatter-only
functional $L(\cdot,Q)$ with $Q = Q^1(\tilde{P}) = \LL\bigl(
y(X)y(X)^\top\bigr)$, $X \sim P$. In what follows let $\mathbb{H}_0
:= \bigl\{ (x^\top, 0)^\top: x \in\R^q \bigr\}$ and $\mathbb
{H}_1 := \bigl\{ (x^\top, 1)^\top: x \in\R^q \bigr\} = \{y(x) : x
\in\R^q\}$. For any linear subspace $\mathbb{W}$ of $\R^{q+1}$ with
$1 \le\dim(\mathbb{W}) \le q$, elementary linear algebra reveals
that either $\mathbb{W} \subset\mathbb{H}_0$ or
\[
\mathbb{W} \cap\mathbb{H}_1 \ = \ \bigl\{ y(a+v) : v \in\mathbb
{V} \bigr\}
\]
for some $a \in\R^q$ and a linear subspace $\mathbb{V}$ of $\R^q$
with $\dim(\mathbb{V}) = \dim(\mathbb{W}) - 1$.

In case of $\nu= 1$, we know from Theorem~\ref{thm:Uniqueness} that
$\tilde{L}(\cdot,\tilde{P}) = L(\cdot,Q)$ possesses a unique
minimizer up to multiplication with positive scalars if, and only if,
\[
Q(\mathbb{M}(\mathbb{W}))
\ = \ \tilde{P}(\mathbb{W})
\ < \ \frac{\dim(\mathbb{W})}{q+1}
\]
for arbitrary linear subspaces $\mathbb{W}$ of $\R^{q+1}$ with $1 \le
\dim(\mathbb{W}) \le q$. In view of the previous considerations, and
since $\tilde{P}(\mathbb{H}_0) = 0$, this is equivalent to
\[
P(a + \mathbb{V})
\ < \ \frac{\dim(\mathbb{V})+1}{q+1}
\]
for arbitrary $a \in\R^q$ and any linear subspace $\mathbb{V}$ of
$\R^q$ with $0 \le\dim(\mathbb{V}) < q$.

In case of $\nu> 1$, we apply Theorem~\ref{thm:Uniqueness} to $\rho
(s) = \rho_{\nu-1,q+1}(s) = (\nu+ q) \log(\nu+ s -1)$, i.e.\ $\psi
(\infty) = \nu+ q$. Hence $\tilde{L}(\cdot,\tilde{P}) = L(\cdot
,Q)$ possesses a unique minimizer $\Gamma\in\R^{(q+1)\times
(q+1)}_{{\rm sym},>0}$ if, and only if
\[
Q(\mathbb{M}(\mathbb{W}))
\ = \ \tilde{P}(\mathbb{W})
\ < \ \frac{\dim(\mathbb{W}) + \nu- 1}{q + \nu}
\]
for arbitrary linear subspaces $\mathbb{W}$ of $\R^{q+1}$ with $0 \le
\dim(\mathbb{W}) \le q$. Since \mbox{$\tilde{P}(\{0\}) = 0$}, it suffices
to consider the case $\dim(\mathbb{W}) \ge1$, and then the previous
considerations show that our requirement on $\tilde{P}$ is equivalent to
\[
P(a + \mathbb{V})
\ < \ \frac{\dim(\mathbb{V})+\nu}{q+\nu}
\]
for arbitrary $a \in\R^q$ and any linear subspace $\mathbb{V}$ of
$\R^q$ with $0 \le\dim(\mathbb{V}) < q$.

It remains to show that for $\nu> 1$, a minimizer $\Gamma$ of $\tilde
{L}(\cdot,\tilde{P})$ satisfies\break  \mbox{$\Gamma_{q+1,q+1} = 1$}. To this end,
recall that $\Gamma$ satisfies the fixed-point equation
\begin{equation}
\label{eq:Gamma}
\Gamma
\ = \ \Psi(Q) = \Ex\Bigl( \frac{q + \nu}{Y^\top\Gamma^{-1}Y +
\nu-1} \, YY^\top\Bigr)
\end{equation}
with $Y := y(X)$, $X \sim P$. In particular, since $Y_{q+1} = 1$ almost surely,
\[
\Gamma_{q+1,q+1} \ = \ \Ex\frac{q + \nu}{Y^\top\Gamma^{-1}Y + \nu
-1}.
\]
But \eqref{eq:Gamma} implies also that
\begin{align*}
q+1 \ = \ \tr(\Gamma^{-1} \Psi(Q)) \
&= \ \Ex\frac{(q+\nu) Y^\top\Gamma^{-1}Y}{Y^\top\Gamma^{-1}Y +
\nu-1} \\
&= \ q+\nu- \Ex\frac{(q + \nu)(\nu-1)}{Y^\top\Gamma^{-1}Y + \nu
-1} \\
&= \ q+\nu- (\nu- 1) \Gamma_{q+1,q+1} \\
&= \ q+1 + (\nu- 1)(1 - \Gamma_{q+1,q+1}),
\end{align*}
i.e.\ $\Gamma_{q+1,q+1} = 1$.
\end{proof}

\begin{proof}[\bf Proof of Theorem~\ref{thm:WeakDiffability.t}]
It follows from Theorem~\ref{thm:Diffability} that with asymptotic
probability one there exists a unique minimizer $\bs{\Gamma}(\hat
{P}_n)$ of
\[
\int\bigl[ \rho(y(x)^\top\Gamma^{-1} y(x)) - \rho(\|y(x)\|^2)
\bigr]
\, \hat{P}_n(dx) + \log\det(\Gamma)
\]
over all $\Gamma\in\R^{(q+1)\times(q+1)}_{{\rm sym},>0}$. In case
of $\nu= 1$ we also require that $\det(\Gamma) = 1$. Moreover,
\[
\bs{\Gamma}(\hat{P}_n) \ = \ I_{q+1} - \tilde{H}(P)^{-1} \tilde
{G}(\hat{P}_n)
+ o_p \bigl( \|\tilde{G}(\hat{P}_n)\| \bigr)
\]
with the operator $\tilde{H}(P)$ as stated, and
\[
\tilde{G}(\hat{P}_n)
\ := \ I_{q+1} - \int\tilde{\rho}'(\|y(x)\|^2) y(x)y(x)^\top\, \hat
{P}_n(dx)
\ \to_p \ 0.
\]
Now we set
\[
\tilde{Z}(x) \ := \ \tilde{H}(P)^{-1}
\bigl( \rho'(\|x\|^2) y(x) y(x)^\top- I_{q+1} \bigr)
\]
for $x \in\R^q$. This defines a bounded, continuous function $\tilde
{Z} : \R^q \to\R_{{\rm sym}}^{(q+1)\times(q+1)}$ with $\int\tilde
{Z} \, dP = 0$. Since the operator $\tilde{H}(P)$ is non-singular,
both $\|\tilde{G}(\hat{P}_n)\|$ and $\delta_n := \bigl\| \int
\tilde{Z} \, d\hat{P}_n \bigr\|$ tend to zero in probability at the
same speed, and we may write
\[
\bs{\Gamma}(\hat{P}_n) \ = \ I_{q+1} + \int\tilde{Z} \, d\hat{P}_n
+ o_p(\delta_n).
\]
But then
\begin{align*}
&
\begin{bmatrix}
\bSigma(\hat{P}_n) + \bmu(\hat{P}_n)\bmu(\hat{P}_n)^\top
& \bmu(\hat{P}_n) \\
\bmu(\hat{P}_n)^\top
& 1
\end{bmatrix}
\\
&\quad= \ (\bs{\Gamma}(\hat{P}_n)_{q+1,q+1})^{-1} \bs{\Gamma
}(\hat{P}_n) \\
&\quad= \ \Bigl( 1 - \int\tilde{Z}_{q+1,q+1} \, d\hat{P}_n +
o_p(\delta_n) \Bigr)
\Bigl( I_{q+1} + \int\tilde{Z} \, d\hat{P}_n + o_p(\delta_n) \Bigr
) \\
&\quad= \ I_{q+1}
+ \int\bigl( \tilde{Z} - \tilde{Z}_{q+1,q+1} I_{q+1} \bigr) \,
d\hat{P}_n
+ o_p(\delta_n).
\end{align*}
In particular, $\bmu(\hat{P}_n) = O_p(\delta_n)$, whence $\bmu(\hat
{P}_n) \bmu(\hat{P}_n)^\top= O_p(\delta_n^2) = o_p(\delta_n)$ and thus
\[
\begin{bmatrix}
\bSigma(\hat{P}_n) - I_q & \bmu(\hat{P}_n) \\
\bmu(\hat{P}_n)^\top& 0
\end{bmatrix}
\ = \ \int\bigl( \tilde{Z} - \tilde{Z}_{q+1,q+1} I_{q+1} \bigr) \,
d\hat{P}_n
+ o_p(\delta_n).
\]

It remains to show that $\tilde{Z}_{q+1,q+1}(x) = 0$ for any fixed $x
\in\R^q$ in case of $\nu> 1$. To this end we consider the nonrandom
distributions $P_n := (1 - n^{-1}) P + n^{-1} \delta_x$. For
sufficiently large $n$, $\bs{\Gamma}(P_n)$ is well-defined with $\bs
{\Gamma}(P_n)_{q+1,q+1} = 1$. On the other hand, $\int\tilde{Z} \,
dP_n = n^{-1} \tilde{Z}(x)$ and
\[
\bs{\Gamma}(P_n)
\ = \ I_{q+1} + n^{-1} \tilde{Z}(x) + o(n^{-1}),
\]
which implies that $\tilde{Z}_{q+1,q+1}(x) = 0$.
\end{proof}

\begin{proof}[\bf Proof of Remarks~\ref{rem:WeakDiffability.t.R1} and
\ref{rem:WeakDiffability.t.R2}]
Recall that $\tilde{G}(P) = 0$ is equivalent to
\begin{equation}
\label{eq:SomeMoments.1}
\int\frac{\nu+ q}{\nu+ \|x\|^2} \,
\begin{bmatrix} xx^\top& x \\ x^\top& 1
\end{bmatrix}
\, P(dx)
\ = \
\begin{bmatrix} I_q & 0 \\ 0 & 1
\end{bmatrix}
,
\end{equation}
in particular,
\begin{equation}
\label{eq:SomeMoments.2}
\int\frac{(\nu+ q)\|x\|^2}{\nu+ \|x\|^2} \, P(dx) \ = \ q
\quad\text{and}\quad
\int\frac{\nu+ q}{\nu+ \|x\|^2} \, P(dx) \ = \ 1.
\end{equation}
Now we introduce the auxiliary objects
\begin{align*}
\Psi_2 = \Psi_2(P) \
&:= \ \int\frac{\nu+ q}{(\nu+ \|x\|^2)^2} \, xx^\top\, P(dx) \ \in
\ \Rqqsympd, \\
\beta= \beta(P) \
&:= \ \int\frac{(\nu+ q)\nu}{(\nu+ \|x\|^2)^2} \, P(dx) \ > \ 0,
\end{align*}
i.e.
\[
\beta+ \tr(\Psi_2) \ = \ \int\frac{\nu+q }{\nu+ \|x\|^2} P(dx) \
= \ 1,
\]
and the operator $H = H(P) : \Rqqsym\to\Rqqsym$ given by
\[
HA \ := \ A - \int\frac{\nu+ q}{(\nu+ \|x\|^2)^2} \, x^\top Ax \,
xx^\top\, P(dx).
\]
Then for a matrix
\[
M \ = \
\begin{bmatrix}
A & b \\ b^\top& c
\end{bmatrix}
\]
with $A \in\Rqqsym$, $b \in\R^q$ and $c \in\R$, we may write
\begin{align*}
\tilde{H}(P) M \,
= \, &
\begin{bmatrix}
A & \!b \\ b^\top& \!c
\end{bmatrix}
- \int\frac{(\nu+ q)(x^\top Ax + 2x^\top b + c)}{(\nu+ \|x\|^2)^2}
\begin{bmatrix} xx^\top& \!x \\ x^\top& \!1
\end{bmatrix}
\, P(dx) \\
= \, &
\begin{bmatrix}
HA - c \Psi_2 & \!0 \\
0 & \!(1 - \beta/\nu)c - \langle\Psi_2, A\rangle
\end{bmatrix}
+
\begin{bmatrix}
0 & \!(I_q - 2\Psi_2) b \\
b^\top(I_q - 2 \Psi_2) & \!0
\end{bmatrix}
\end{align*}
Here we utilized the fact that any term of the form $f(xx^\top) x$
integrates to $0$, due to the symmetry of $P$. Consequently,
\begin{align*}
\biggl\{ \tilde{H}(P)
\begin{bmatrix} A & 0 \\ 0 & c
\end{bmatrix}
:
A \in\Rqqsym, c \in\R\biggr\} \
&\subset\ \biggl\{
\begin{bmatrix} A & 0 \\ 0 & c
\end{bmatrix}
:
A \in\Rqqsym, c \in\R\biggr\} \\
\biggl\{ \tilde{H}(P)
\begin{bmatrix} 0 & b \\ b^\top& 0
\end{bmatrix}
:
b \in\R^q \biggr\} \
&= \ \biggl\{
\begin{bmatrix} 0 & b \\ b^\top& 0
\end{bmatrix}
:
b \in\R^q \biggr\},
\end{align*}
where the latter equality follows from $\tilde{H}(P)$ being
nonsingular on $\tilde{\mathbb{M}}$. In particular, $B = B(P) := (I_q
- 2 \Psi_2(P))^{-1} \in\Rqqsym$ exists, and
\begin{align*}
\tilde{Z}(x) \
&= \ \tilde{H}(P)^{-1} \bigl( \rho'(\|x\|^2) y(x)y(x)^{-1} - I_{q+1}
\bigr) \\
&= \ \tilde{H}(P)^{-1}
\begin{bmatrix}
\rho'(\|x\|^2) xx^\top- I_q & \!\!0 \\
0 & \!\!\rho'(\|x\|^2) - 1
\end{bmatrix}
+ \rho'(\|x\|^2)
\tilde{H}(P)^{-1}
\begin{bmatrix} 0 & \!\!x \\ x^\top& \!\!0
\end{bmatrix}
\\
&= \
\begin{bmatrix} Z(xx^\top) & 0 \\ 0 & z(\|x\|^2)
\end{bmatrix}
+ \rho'(\|x\|^2)
\begin{bmatrix} 0 & Bx \\ x^\top B & 0
\end{bmatrix}
\end{align*}
with certain bounded, continuous functions $Z : \Rqqsympsd\to\Rqqsym
$ and $z : [0,\infty) \to\R$.

This proves Remark~\ref{rem:WeakDiffability.t.R1}. In the special case
of $P$ being spherically symmetric around $0$, a random vector $X \sim
P$ may be written as $X = RU$ with independent random variables $R \ge
0$ and $U \in\R^q$, where $U$ is uniformly distributed on the unit
sphere of $\R^q$. Then \eqref{eq:SomeMoments.2} and the definition of
$\beta$ translate to
\[
\Ex\Bigl( \frac{(\nu+ q) R^2}{\nu+ R^2} \Bigr) \ = \ q,
\quad
\Ex\Bigl( \frac{\nu+ q}{\nu+ R^2} \Bigr) \ = \ 1
\quad\text{and}\quad
\beta\ = \ \Ex\Bigl( \frac{(\nu+ q)\nu}{(\nu+ R^2)^2} \Bigr).
\]
Further, it follows from $\Ex(UU^\top) = q^{-1} I_q$ that
\[
\Psi_2
\ = \ \Ex\Bigl( \frac{(\nu+ q) R^2}{(\nu+ R^2)^2} \, UU^\top\Bigr)
\ = \ \Ex\Bigl( \frac{\nu+ q}{\nu+ R^2} - \frac{(\nu+ q)\nu
}{(\nu+ R^2)^2} \Bigr)
\, \frac{1}{q} \, I_q
\ = \ \gamma_1 \, I_q
\]
with
\[
\gamma_1 \ := \ \frac{1 - \beta}{q}.
\]
Now we write $A \in\Rqqsym$ as $A = A_0 + a I_q$ with $a := \tr
(A)/q$, so $\tr(A_0) = 0$. Then
\[
\langle\Psi_2, A\rangle
\ = \ \frac{1 - \beta}{q} \tr(A)
\ = \ \gamma_1 q a
\]
and, as shown in the proof of Remark~\ref{rem:CLT1.R2},
\[
H(P)A
\ = \ \gamma_0 \, A_0 + \gamma_1\nu\, a I_q,
\]
with
\[
\gamma_0 \ := \ \frac{q + 2\gamma_1\nu}{q+2}.
\]
Hence for $A_0 \in\Rqqsym$ with $\tr(A_0) = 0$, $a \in\R$, $b \in
\R^q$ and $c \in\R$,
\[
\tilde{H}(P)
\begin{bmatrix} A_0 + aI_q & b \\ b^\top& c
\end{bmatrix}
\ = \
\begin{bmatrix}
\gamma_0 A_0 + \gamma_1 (\nu a - c) I_q & (1 - 2\gamma_1) b \\
(1 - 2\gamma_1) b^\top& (1 - \beta/\nu) c - \gamma_1 q a
\end{bmatrix}
.
\]

In case of $\nu= 1$ we only consider the case $\tr(M) = 0$, i.e.\ $c
= - qa$. Then
\[
\tilde{H}(P)
\begin{bmatrix} A_0 + aI_q & b \\ b^\top& -q a
\end{bmatrix}
\ = \
\begin{bmatrix}
\gamma_0 A_0 + \gamma_1 (q+1) a I_q & (1 - 2\gamma_1) b \\
(1 - 2\gamma_1) b^\top& - q \gamma_1 (q+1) a
\end{bmatrix}
,
\]
and this shows that
\[
\tilde{H}(P)^{-1}
\begin{bmatrix} A_0 + aI_q & b \\ b^\top& -q a
\end{bmatrix}
\ = \
\begin{bmatrix}
\gamma_0^{-1} A_0 + \gamma_1^{-1} (q+1)^{-1} a I_q & (1 - 2\gamma
_1)^{-1} b \\
(1 - 2\gamma_1)^{-1} b^\top& - q \gamma_1^{-1} (q+1)^{-1} a
\end{bmatrix}
.
\]
Now we consider $x \in\R^q$ and write $xx^\top= A_0(x) + a(x) I_q +
I_q$ with $a(x) := q^{-1} \|x\|^2 - 1$, so $\tr(A_0(x)) = 0$. Then
\begin{align*}
\tilde{Z}(x) \
&= \ (1 + \|x\|^2)^{-1} \tilde{H}(P)^{-1}
\bigl( (1 + q) y(x)y(x)^\top- (1 + \|x\|^2) I_{q+1} \bigr) \\
&= \ (1 + \|x\|^2)^{-1} \tilde{H}(P)^{-1}
\begin{bmatrix}
(1 + q) A_0(x) + a(x) I_q & (1 + q) x \\
(1 + q) x^\top& - q a(x)
\end{bmatrix}
\\
&= \ (1 + \|x\|^2)^{-1}
\begin{bmatrix}
\displaystyle
\frac{1 + q}{\gamma_0} A_0(x) + \frac{1}{\gamma_1 (1+q)} a(x) I_q
& (1 - 2\gamma_1)^{-1} x \\
(1 - 2\gamma_1)^{-1} x^\top
& \displaystyle
\frac{-q}{\gamma_1 (1+q)} a(x)
\end{bmatrix}
.
\end{align*}
Consequently,
\[
\tilde{Z}(x) - \tilde{Z}(x)_{q+1,q+1} I_{q+1}
\ = \ (1 + \|x\|^2)^{-1}
\begin{bmatrix}
c_0 A_0(x) + c_1 a(x) I_q & c_2 x \\
c_2 x^\top& 0
\end{bmatrix}
\]
with
\begin{align*}
c_0 \ &:= \ \frac{1 + q}{\gamma_0} \ = \ \frac{(q + 1)(q + 2)}{q +
2(1 - \beta)/q}, \\
c_1 \ &:= \ \frac{1}{\gamma_1} \ = \ \frac{q}{1 - \beta}, \\
c_2 \ &:= \ (1 - 2 \gamma_1)^{-1} \ = \ \frac{q}{q - 2(1 - \beta)}.
\end{align*}

In case of $\nu> 1$, elementary calculations reveal that the inverse
of the mapping
\[
\begin{bmatrix} a \\ c
\end{bmatrix}
\ \mapsto\
\begin{bmatrix}
\gamma_1 (\nu a - c) \\
(1 - \beta/\nu) c - \gamma_1 q a
\end{bmatrix}
=
\begin{bmatrix}
\gamma_1 \nu& - \gamma_1 \\
- \gamma_1 q & 1 - \beta/\nu
\end{bmatrix}
\begin{bmatrix} a \\ c
\end{bmatrix}
\]
is given by
\[
\begin{bmatrix} a \\ c
\end{bmatrix}
\ \mapsto\ \frac{1}{\nu- 1}
\begin{bmatrix}
(1 - \beta/\nu)/\gamma_1 & 1 \\
q & 1 \nu
\end{bmatrix}
\begin{bmatrix} a \\ c
\end{bmatrix}
.
\]
Consequently
\[
\tilde{H}(P)^{-1}
\begin{bmatrix} A_0 + aI_q & b \\ b^\top& c
\end{bmatrix}
\ = \
\begin{bmatrix}
\displaystyle
\gamma_0^{-1} A_0 + \frac{(1 - \beta/\nu) \gamma_1^{-1} a + c}{\nu
- 1} \, I_q
& (1 - 2\gamma_1)^{-1} b \\
(1 - 2\gamma_1)^{-1} b^\top
& \displaystyle
\frac{qa + \nu c}{\nu- 1}
\end{bmatrix}
.
\]
Hence
\begin{align*}
\tilde{Z}(x) \
&= \ (\nu+ \|x\|^2)^{-1} \tilde{H}(P)^{-1}
\bigl( (\nu+ q) y(x)y(x)^\top- (\nu+ \|x\|^2) I_{q+1} \bigr) \\
&= \ (\nu+ \|x\|^2)^{-1} \tilde{H}(P)^{-1}
\begin{bmatrix}
(\nu+ q) A_0(x) + \nu a(x) I_q & (\nu+ q) x \\
(\nu+ q) x^\top& - q a(x)
\end{bmatrix}
\\
&= \ (1 + \|x\|^2)^{-1}
\begin{bmatrix}
\displaystyle
\frac{\nu+ q}{\gamma_0} A_0(x) + \frac{q}{1 - \beta} a(x) I_q
& (1 - 2\gamma_1)^{-1} x \\[1.5ex]
(1 - 2\gamma_1)^{-1} x^\top
& 0
\end{bmatrix}
\\
&= \ (1 + \|x\|^2)^{-1}
\begin{bmatrix}
c_0 A_0(x) + c_1 a(x) I_q & c_2 x \\
c_2 x^\top& 0
\end{bmatrix}
\end{align*}
with $c_0, c_1, c_2$ as stated.
\end{proof}


\section*{Acknowledgement}
Constructive comments by an associate editor and two referees are
gratefully acknowledged. Many thanks to David Tyler for stimulating
discussions, in particular for encouraging us to drop the assumption of
$\rho'$ being non-increasing.


\section*{List of notation and assumptions}

\paragraph{Linear and affine transformations}
Let $P$ and $Q$ be probability distributions on $\R^q$ and $\Rqqsympsd
$, respectively. For $a \in\R^q$, $B \in\Rqqns$ and $X \sim P$, $S
\sim Q$,
\[
P^B \ := \ \LL(BX), \quad
P^{a,B} \ := \ \LL(a + BX),
\]
and
\[
Q^B \ := \ \LL(BSB^\top), \quad
Q_B \ := \ \LL(B^{-1}SB^{-\top}).
\]

\paragraph{Special (empirical) distributions}
Let $X = X_1, X_2, X_3, \ldots$ be i.i.d.\ $\sim P$. Then for $k \ge2$,
\[
Q^1(P) \ := \ \LL(XX^\top)
\quad\text{and}\quad
Q^k(P) \ := \ \LL\bigl( S(X_1,X_2,\ldots,X_k) \bigr)
\]
with $S(x_1,x_2,\ldots,x_k)$ denoting the sample covariance matrix of
$x_1, x_2, \ldots,\break  x_k \in\R^q$. Furthermore,
\[
\hat{P} \
:= \ n_{}^{-1} \sum_{i=1}^n \delta_{X_i}^{}, \quad
\hat{Q}^1 \
:= \ n_{}^{-1} \sum_{i=1}^n \delta_{X_i^{}X_i^\top}^{}
\]
and
\[
\hat{Q}^k \
:= \ \binom{n}{k}^{-1} \sum_{1 \le i_1 < i_2 < \cdots< i_k \le n}
\delta_{S(X_{i_1},X_{i_2},\ldots,X_{i_k})}^{}.
\]

\paragraph{Log-likelihood functions (times $-2$) and derivatives}
\begin{align*}
L(\mu,\Sigma,P) \
&:= \ \int\bigl[ \rho\bigl( (x - \mu)^\top\Sigma^{-1} (x - \mu)
\bigr)
- \rho(x^\top x) \bigr] \, P(dx) + \log\det(\Sigma), \\
L_\rho(\Sigma,Q) \
&:= \ \int\bigl[ \rho(\tr(\Sigma^{-1}M)) - \rho(\tr(M)) \bigr]
\, Q(dM)
+ \log\det\Sigma.
\end{align*}
Under certain conditions, as $\Rqqsym\ni A \to0$,
\begin{align*}
L_\rho(\exp(A),Q) \
&= \ \langle G_\rho(Q),A\rangle+ o(\|A\|) \\
&= \ \langle G_\rho(Q),A\rangle+ 2^{-1} H_\rho(A,Q) + o(\|A\|^2),
\end{align*}
where
\begin{align*}
G_\rho(Q) \
&:= \ I_q - \Psi_\rho(Q),
\quad\Psi_\rho(Q) \ := \ \Psi_\rho(I_q,Q), \\
\Psi_\rho(\Sigma,Q) \
&:= \ \int\rho'(\tr(\Sigma^{-1} M)) M \, Q(dM), \\
H_\rho(A,Q) \
&:= \ \int\bigl( \rho'(\tr(M)) \tr(A^2M) + \rho''(\tr(M)) \tr
(AM)^2 \bigr) \, Q(dM).
\end{align*}
Moreover, $H_\rho(A,Q) = \langle H_\rho(Q) A, A \rangle$ with the
linear operator $H_\rho(Q) : \Rqqsym\to\Rqqsym$ given by
\[
H_\rho(Q) A
\ := \ 2_{}^{-1} \bigl( A \Psi_\rho(Q) + \Psi_\rho(Q) A \bigr)
+ \int\rho''(\tr(M)) \tr(AM) M \, Q(dM).
\]
Sometimes we write $\Rqqsym= \W_0 \oplus\W_1$ with
\[
\W_0 \ := \ \{A \in\Rqqsym: \tr(A) = 0\}
\quad\text{and}\quad
\W_1 \ := \ \{s I_q : s \in\R\}.
\]
In Case~0, we view $H_\rho(Q)$ as an endomorphism of $\W_0$.

\paragraph{Assumptions on $\rho$ and $Q$}
We assume that $\rho$ is continuously differentiable on $(0,\infty)$
with derivative $\rho' > 0$. For $s > 0$ we define
\[
\psi(s) \ := \ s \rho'(s).
\]

\paragraph{Case 0} $\rho(s) = q \log(s)$ for $s > 0$, and $Q(\{0\})
= 0$.

\paragraph{Case 1} $\psi$ is strictly increasing on $(0,\infty)$
with limits $\psi(0) = 0$ and $\psi(\infty) \in(q,\infty]$.
Moreover, $J_\rho(\lambda,Q) := \int\psi(\lambda\tr(M)) \, Q(dM)
< \infty$ for any $\lambda\ge1$.

\paragraph{Case 1'} $\rho$ is twice continuously differentiable on
$(0,\infty)$ with $\psi' > 0$, and $\psi$ has limits $\psi(0) = 0$
and $\psi(\infty) \in(q,\infty]$. Moreover, $J_\rho(Q) := \int
\psi(\tr(M)) \, Q(dM) < \infty$, and there exists a constant $\kappa
\ge0$ such that $s \psi'(s) \le\kappa\psi(s)$ for all $s > 0$.

\paragraph{Existence of $\bSigma_\rho(Q)$}
Let $\QQ_\rho$ be the set of all distributions $Q$ such that $L_\rho
(\cdot,Q)$ is real-valued and has a unique minimizer $\bSigma_\rho
(Q) \in\Rqqsympd$, where $\det(\bSigma_\rho(Q)) = 1$ in Case~0. To
characterize $\QQ_\rho$ let
\begin{align*}
\VV_q \
&:= \ \{\V: \V\ \text{a linear subspace of} \ \R^q\}, \\
\M(\V) \
&:= \ \bigl\{ M \in\Rqqsym: M \R^q \subset\V\bigr\}
\quad\text{for} \ \V\in\VV_q.
\end{align*}
Necessary and sufficient condition for $Q \in\QQ_\rho$:

\paragraph{Condition~0 (for Case~0)} For any $\V\in\VV_q$ with $1
\le\dim(\V) < q$,
\[
Q(\M(\V)) \ < \ \frac{\dim(\V)}{q}.
\]

\paragraph{Condition~1 (for Case~1)} For any $\V\in\VV_q$ with $0
\le\dim(\V) < q$,
\[
Q(\M(\V)) \ < \ \frac{\psi(\infty) - q + \dim(\V)}{\psi(\infty
)}.
\]

\bibliographystyle{imsart-nameyear}

\end{document}